\documentclass[10pt,reqno]{article}

\usepackage{amssymb}
\usepackage{mathtools}
\usepackage{amsthm}
\usepackage{amsfonts}
\usepackage{amscd}
\usepackage{mathrsfs}

\usepackage{faktor}
\usepackage{xfrac}

\usepackage{graphicx}
\usepackage[export]{adjustbox}
\usepackage[dvipsnames]{xcolor}
\usepackage{colortbl}

\usepackage[margin=1in]{geometry}
\usepackage{setspace}
\usepackage{float}

\usepackage[inline]{enumitem}
\usepackage{subcaption}
\usepackage{appendix}
\usepackage{xparse}
\usepackage{tocloft}

\usepackage{hyperref}
\hypersetup{
    colorlinks=true,
    linkcolor=blue,
    citecolor=cyan,
    filecolor=magenta,
    urlcolor=cyan,
    pdfpagemode=FullScreen,
}
\usepackage{authblk}

\usepackage{nameref}
\usepackage[nocompress, space]{cite}

\usepackage{comment}
\usepackage{esint}
\usepackage{lineno}

\graphicspath{{./plots/}}

\numberwithin{equation}{section}

\theoremstyle{plain}
\newtheorem{thm}{Theorem}[section]
\newtheorem{theorem}[thm]{Theorem}
\newtheorem*{thm*}{Theorem}

\newtheorem{corollary}[thm]{Corollary}
\newtheorem*{cor*}{Corollary}
\newtheorem{lemma}[thm]{Lemma}
\newtheorem*{lemma*}{Lemma}

\newtheorem{proposition}[thm]{Proposition}
\newtheorem*{prop*}{Proposition}

\theoremstyle{definition}
\newtheorem{egsample}[thm]{Example}

\newtheorem{definition}[thm]{Definition}

\counterwithout{problem}{section}

\newtheorem{remark}[thm]{Remark}

\counterwithout{assumption}{section}

\newtheoremstyle{Customption}
  {\topsep}
  {\topsep}
  {\normalfont}
  {}
  {\bfseries}
  {:}
  {.5em}
  {}
\theoremstyle{Customption}

\newtheorem{innercustomAssumsA}{Assumptions}
\newenvironment{customAssumsA}[1]{
    \renewcommand\theinnercustomAssumsA{#1}
    \innercustomAssumsA
}{\endinnercustomAssumsA}

\newtheorem*{theorem*}{Theorem}

\newtheorem{mfg}{MFG System}
\counterwithout{mfg}{section}

\newenvironment{eg*}
{\begin{egsample*}}
{\leavevmode\unskip\penalty9999 \hbox{}\nobreak\hfill\quad\hbox{$\blacktriangle$}
\end{egsample*}}

\DeclareMathOperator*{\Div}{div}

\DeclareMathOperator{\avg}{avg}

\newcommand{\R}{\mathbb{R}}

\def \cX {\mathcal{X}}
\def \cY {\mathcal{Y}}

\newcommand{\cQ}{\mathcal{Q}}

\def \cI {\mathcal{I}}
\def \cH {\mathcal{H}}

\def\Chi{\scalebox{1.18}{\ensuremath{\chi}}}

\def \a {\alpha}
\def \b {\beta}

\def \th {\vartheta}

\def \dd {\delta}

\def\dx{{\,\,\rm d}x}

\def\dt{{\,\,\rm d}t}
\def\ds{{\,\,\rm d}s}

\begin{document}

\title{A Monotone Operator Approach to Separable Mean-Field Games with Mixed Boundary Conditions
}

\author[2,1]{AbdulRahman M. Alharbi}
\author[1]{Diogo Gomes}

\affil[1]{King Abdullah University of Science and Technology (KAUST)}
\affil[2]{The Islamic University of Al-Madinah}

\maketitle
\begin{abstract}
	We study a class of local, first-order, stationary mean-field games (MFGs) on bounded domains with nonstandard mixed boundary conditions: prescribed inflow on $\Gamma_N$ and a relaxed Signorini-type exit condition on $\Gamma_D$ (complementarity between exit flux and boundary value).
    For separable Hamiltonians, we overcome the lack of coercivity and the boundary complementarity constraints by introducing a monotone operator on a convex domain, augmented with an auxiliary nonnegative boundary variable $h$ encoding exit flux.
    To address a constant-shift degeneracy in the value function $u$ (the transport equation depends only on $Du$), we employ a quotient-space formulation that restores coercivity. Using the Browder--Minty theorem,
    we prove existence for a penalized operator $A_\epsilon$ on a convex domain and pass to the limit as $ \epsilon \to 0^+$.  
We obtain weak solutions $(m,u,h)$ solving the associated variational inequality, with $m \in L^{\beta+1}(\Omega)$, $u \in W^{1,\gamma}(\Omega)$, and $h$ in the dual trace space on $\Gamma_D$.
\end{abstract}

\section{Introduction} \label{Sec:Intro}

Mean Field Games (MFGs) provide a rigorous framework for analyzing strategic interactions in systems comprising a continuum of small players. 
Independently introduced by Huang, Malham\'e, and Caines \cite{huangLargePopulationStochastic2006,Caines2} and by Lasry and Lions \cite{ll1,ll2,lasryMeanFieldGames2007}, the theory characterizes equilibria via coupled systems of partial differential equations: a Hamilton--Jacobi (HJ) equation governing individual optimal control, and a transport equation governing the population distribution. These models have been applied in crowd dynamics \cite{achdouMeanFieldGames2019,toumi_tractable_2020,ghattassiNonseparableMeanField2025}, economics \cite{moll}, and finance \cite{CarmonaDelarueLacker2017,gomes2023priceagent}.

Variational techniques apply to first-order separable MFG systems \cite{Card1order,cardaliaguetMeanFieldGames2014}. Monotone operator methods extend this framework to low-regularity regimes: the monotonicity of the coupling corresponds to convexity of the associated functional, yielding existence, uniqueness, and stability  \cite{FG2,FGT1, FeGoTa21,FerreiraGomesTada2025,ferreiraWeakstrongUniquenessSolutions2025,ferreiraSolvingMeanFieldGames2025}. This structure admits monotone-operator discretizations
\cite{almullaTwoNumericalApproaches2017,GJS2,nurbekyanMonotoneInclusionMethods2024}. More recently, displacement-type or ``hidden'' monotonicity has been identified for non-separable Hamiltonians and master equations, extending the theory beyond classical separable settings \cite{gangboMeanFieldGames2022a,meszarosMeanFieldGames2024,bansilHiddenMonotonicityCanonical2024,bertucciMonotoneSolutionsMean2023}.
We establish an existence theory for local, stationary first-order separable MFGs subject to mixed inflow/Signorini boundary conditions. Our approach centers on an augmented monotone-operator formulation on a convex cone; when paired with a quotient-space reduction to restore coercivity in $u$, this framework provides a systematic operator-theoretic alternative to the variational methods developed in \cite{Alharbi2026MFG}.

\subsection{The Model Problem}
We analyze the operator associated with the following separable MFG system.
\begin{mfg}[\bfseries The Separable MFG]\label{mfg:2}
Let \(\Omega \subset \mathbb{R}^d\) be an open, bounded, and connected domain with a \(C^1\) boundary, denoted by \(\Gamma:= \partial \Omega\).
Throughout, $\nu$ denotes the outward unit normal on $\Gamma$, and $\cH^{d-1}$ denotes the $(d-1)$-dimensional Hausdorff measure.
We assume that \(\Gamma\) is partitioned into two relatively open, $C^1$ \((d-1)\)-dimensional manifolds: a Dirichlet boundary \(\Gamma_D\) and a Neumann boundary \(\Gamma_N\) such that
\[
\Gamma = \overline{\Gamma_D \cup \Gamma_N},\quad
\Gamma_D \cap \Gamma_N = \emptyset,\quad
\text{and }\cH^{d-1}(\partial\Gamma_D\cap\partial\Gamma_N)=0,
\]
and assume that $\Gamma_D$ and $\Gamma_N$ are both nonempty (hence of positive $\cH^{d-1}$-measure).
Additionally, let
\[
H:\Omega \times \R^d \to \R, \quad g: [0,\infty) \to \R, \quad \text{and }\enskip j:\Gamma_N \to [0,\infty)
\]
be given functions such that the map \(p \mapsto H(x,p)\) is convex, \( g \) is strictly increasing, and \( j\not\equiv 0 \). The MFG we study is given by
\begin{equation}\label{eq:MFG2sys}
	\begin{cases}
		H(x, Du) = g(m) & \text{in } \Omega, \\
		-\Div(m\,D_pH(x, Du)) = 0 & \text{in } \Omega,
	\end{cases}
\end{equation}
subject to the boundary conditions
\begin{equation}\label{eq:MFG2bc}
\begin{cases}
	 m\,D_pH(x, Du) \cdot \nu = j(x) & \text{on } \Gamma_N, \\
		m\,D_pH(x, Du) \cdot \nu \le 0, \quad u(x) \le 0 & \text{on } \Gamma_D, \\
		u\,m\,D_pH(x, Du) \cdot \nu = 0 & \text{on } \Gamma_D.
\end{cases}
\end{equation}
\end{mfg}

We call \eqref{eq:MFG2sys} \emph{separable} since the coupling enters additively: $\mathcal{H}(x,p,m)=H(x,p)-g(m)$. For the boundary conditions, the population flux is $J=-mD_pH(x,Du)$.
On $\Gamma_N$, the condition $mD_pH(x,Du)\cdot\nu=j$ prescribes an inflow of magnitude $j$ (since $J\cdot\nu=-j<0$).
On $\Gamma_D$, setting $h := J \cdot \nu = -m\,D_pH(x,Du) \cdot \nu$, the conditions take the Signorini form: $u \le 0$, $h \ge 0$, and $uh = 0$. This complementarity structure, reminiscent of the thin obstacle problem \cite{Signorini1959} and the boundary obstacle problem \cite{Athanasopoulos2008}, means the natural weak formulation is a variational inequality rather than a classical boundary-value problem.

The foundation for studying MFGs with such conditions was established in \cite{Alharbi2026MFG}, where a monotone MFG operator akin to that in \cite{FG2} was derived. However, the boundary complementarity forces the natural operator domain to be non-convex, which prevents the application of Browder--Minty-type existence theory. We overcome this by introducing an augmented operator with an auxiliary boundary flux variable, proving existence on a convex set, and recovering coercivity via a quotient-space reduction.
While \cite{Alharbi2026MFG} derives the operator structure and establishes existence via a variational approach, here we obtain existence (and the $\epsilon\to0$ limit) by convexifying the operator domain through the auxiliary boundary flux variable.

\subsection{Methodology and Operators}

We construct a MFG operator on a convex domain that naturally incorporates the boundary conditions. We achieve this by introducing an auxiliary variable $h \ge 0$ on $\Gamma_D$ encoding the exit flux $h = J \cdot \nu$.
This allows us to encode the Signorini complementarity through a variational inequality on a convex cone, while keeping the operator domain convex.

Let $\gamma > 1$ be a key growth exponent (see Section~\ref{sec:Nota&Assum}), and let $\gamma' = \gamma/(\gamma-1)$ be its Hölder conjugate. We consider triplets $(m,u,h)$ in a product of Lebesgue and Sobolev spaces. We define the limit operator $A_0$ and a penalized version $A_\epsilon$ (for $\epsilon > 0$) as follows:
\begin{equation}\label{eq:SepOpAIntro_0}
	A_0 \begin{bmatrix} m \\ u \\ h \end{bmatrix} := \begin{bmatrix}
		-H(x,Du) + g(m) \\
    -\Div \big(m\,D_pH(x,Du) \big) + \big(m\,D_pH(x,Du)\cdot \nu\,\Chi_{\Gamma} - j\,\Chi_{\Gamma_N} + h\,\Chi_{\Gamma_D}\big)\cH^{d-1}\\
		-u \,\,\Chi_{\Gamma_D}\cH^{d-1}
	\end{bmatrix},
\end{equation}
and
\begin{equation}\label{eq:NSepOpAIntro}
	A_\epsilon \begin{bmatrix} m \\ u \\ h \end{bmatrix} := \begin{bmatrix}
		-H(x,Du) + g(m) \\
		-\Div \big(m\,D_pH(x,Du) \big) + \big(m\,D_pH(x,Du)\cdot \nu\,\Chi_{\Gamma} - j\,\Chi_{\Gamma_N} + h\,\Chi_{\Gamma_D}\big)\cH^{d-1}\\
		\big(-u + \epsilon^{\gamma'}\,h^{\gamma'-1}\big)\Chi_{\Gamma_D}\cH^{d-1}
	\end{bmatrix},
\end{equation}
where $\Chi_S$ denotes the characteristic function of a set $S$.
These expressions are formal; the rigorous definitions via duality and trace-space pairings are given in Section~\ref{sec:MonoOpSepCase}.

In earlier work (e.g. \cite{FG2,FGT1, ferreiraSolvingMeanFieldGames2025}) the loss of coercivity in MFG operators was typically handled by adding a penalization term. In the present setting, the obstruction is linked to the mixed boundary complementarity on $\Gamma_D$ and the constant-shift invariance of the transport equation;
since 
the HJ and transport equations depend only on $Du$ and the boundary terms depend only on fluxes, the shift $u \mapsto u + c$ is invisible once $\int_{\Gamma_D} h = \int_{\Gamma_N} j$. So, coercivity must be restored on $W^{1,\gamma}/\mathbb{R}$.
Accordingly, we employ a boundary-driven regularization and a reduction strategy:
\begin{enumerate}
    \item We define the operator on a convex cone of nonnegative densities and fluxes.
    \item The boundary-only penalty (the $\epsilon$-term in \eqref{eq:NSepOpAIntro}) restores coercivity in $(m,h)$ for fixed $u$, allowing us to solve for $(m_u, h_u)$ explicitly.
    \item Substituting these back into the system produces a reduced monotone operator on the quotient space $W^{1,\gamma}(\Omega)/\mathbb{R}$, which removes the constant-shift degeneracy and restores coercivity in $u$.
\end{enumerate}
Our existence proof relies on applying the Browder--Minty theorem to this reduced operator, followed by passing to the limit as $\epsilon \to 0^+$.

\subsection{Main Results}
To state our results precisely, we introduce the functional setting.
The assumptions governing the exponents $\alpha,\beta,\gamma$ are stated in Assumption~\ref{assume:D.1}, while structural conditions on $H$ and $g$ are given in Assumptions~\ref{assumset:S}.

Let \( \cX:= L^{\beta+1}(\Omega)\times W^{1,\gamma}(\Omega) \times L^{\gamma'}(\Gamma_D) \), and let \( \cX^+ \subset \cX \) denote the convex cone of triplets with nonnegative density and boundary flux:
\[
\cX^+ := \left\{ (m,u,h) \in \cX : m \ge 0 \text{ a.e. in } \Omega \text{ and } h \ge 0 \text{ a.e. on } \Gamma_D \right\}.
\]
Our first main result establishes the existence of solutions for the penalized operator.

\begin{theorem}\label{thm:ExistenceOfSolutionsForSepA_ep_Intro}
Suppose that Assumptions~\ref{assumset:D} and \ref{assumset:S} hold, and let \( \epsilon > 0 \). Then, there exists a triplet \( (m_{\epsilon} ,u _{\epsilon},h_{\epsilon}) \in \cX^+ \) such that
\[
\left\langle A_{\epsilon} \begin{bmatrix} m_{\epsilon} \\ u_{\epsilon} \\ h_{\epsilon} \end{bmatrix}\, , \, \begin{bmatrix} \mu-m_{\epsilon} \\ v-u_{\epsilon} \\ k-h_{\epsilon} \end{bmatrix} \right \rangle\geq 0
\]
for all \( (\mu,v,k) \in \cX^+ \).
\end{theorem}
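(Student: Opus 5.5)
The plan follows the reduction strategy from the introduction: eliminate $(m,h)$ for fixed $u$, then apply the Browder--Minty theorem to a reduced monotone operator on the quotient space $W^{1,\gamma}(\Omega)/\mathbb{R}$.

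\textbf{Step 1: eliminate $(m,h)$ for fixed $u$.} Fix $u\in W^{1,\gamma}(\Omega)$. The first and third components of the variational inequality decouple pointwise. Testing with $(\mu,u,h_\epsilon)$ gives $\int_\Omega(g(m)-H(x,Du))(\mu-m)\ge0$ for all $\mu\ge0$. Since $g$ is strictly increasing, this yields
\[
m_u=g^{-1}\big(\max\{H(x,Du),g(0)\}\big).
\]
Testing with $(m_\epsilon,u,k)$ gives $\int_{\Gamma_D}(-u+\epsilon^{\gamma'}h^{\gamma'-1})(k-h)\ge0$ for all $k\ge0$. This yields
\[
h_u=\epsilon^{-\gamma'/(\gamma'-1)}(u^+)^{1/(\gamma'-1)}=\epsilon^{-\gamma}(u^+)^{\gamma-1}
\]
on $\Gamma_D$. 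The growth conditions on $g$ and $H$ in Assumptions~\ref{assumset:S}, with the exponent relations in Assumption~\ref{assume:D.1}, should give $m_u\in L^{\beta+1}(\Omega)$ and $m_u D_pH(x,Du)\in L^{\gamma'}$. The trace theorem gives $h_u\in L^{\gamma'}(\Gamma_D)$.

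\textbf{Step 2: the reduced operator.} Define $B_\epsilon:W^{1,\gamma}(\Omega)\to (W^{1,\gamma})^*$ by
\[
\langle B_\epsilon u,v\rangle=\int_\Omega m_u D_pH(x,Du)\cdot Dv\dx-\int_{\Gamma_N} jv\,d\cH^{d-1}+\int_{\Gamma_D}h_u v\,d\cH^{d-1},
\]
which is the weak form of the second component.

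\emph{Monotonicity.} Write $m_u=\partial_p$-derivative structure: $m_u D_pH(x,Du)=D_p\Phi(x,Du)$ with $\Phi(x,p)=G^*$-type convex function $\int_{g(0)}^{\max(H,g(0))}\!g^{-1}$. Since $H$ is convex and $g^{-1}$ is nondecreasing and nonnegative, $\Phi$ is convex, so the bulk term is monotone. Similarly $h_u=D_s\big(\tfrac{\epsilon^{-\gamma}}{\gamma}(s^+)^\gamma\big)$ evaluated at $u$, so the boundary term is monotone.

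\emph{Hemicontinuity.} This follows from continuity of the Nemytskii maps and dominated convergence.

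\textbf{Step 3: coercivity on the quotient (main obstacle).} The bulk term is invariant under $u\mapsto u+c$, but the $\Gamma_D$ term is not. So I will not pass literally to the quotient. Instead I will show coercivity of $B_\epsilon$ on $W^{1,\gamma}$ by splitting $u=\bar u+c$ with $\bar u$ of zero mean. By the growth assumptions, $\Phi(x,p)\ge C|p|^\gamma-C$. This gives the bulk estimate $\langle B_\epsilon u,u\rangle\gtrsim\|D\bar u\|_\gamma^\gamma$ minus lower order terms, using $\Phi(p)-\Phi(0)\le D_p\Phi(p)\cdot p$.

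The constant direction must be controlled by the boundary terms $-\int_{\Gamma_N}ju+\int_{\Gamma_D}\epsilon^{-\gamma}(u^+)^\gamma$:
\begin{itemize}
\item For $c\to+\infty$, the $\Gamma_D$ term dominates like $c^\gamma$.
\item For $c\to-\infty$, the term $-\int_{\Gamma_N}ju=|c|\int j+\dots$ is positive and grows linearly, since $j\ge0$ and $j\not\equiv0$.
\end{itemize}
So $\langle B_\epsilon u,u\rangle/\|u\|\to\infty$. Here Poincaré and trace inequalities on $W^{1,\gamma}/\mathbb{R}$ control $\|\bar u\|_{L^\gamma(\Gamma)}$ by $\|D\bar u\|_\gamma$.

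The delicate case is mixed growth: $\|D\bar u\|$ and $c$ large simultaneously. There I expect to use the convexity inequality $\langle B_\epsilon u,u\rangle\ge \int\Phi(Du)-\int\Phi(0)+\int_{\Gamma_D}\tfrac{\epsilon^{-\gamma}}{\gamma}(u^+)^\gamma-\int_{\Gamma_N}ju$ and estimate $\int_{\Gamma_N} j\bar u$ by Young's inequality.

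\textbf{Step 4: Browder--Minty and recovery of the triplet.} Browder--Minty then gives $u_\epsilon$ with $B_\epsilon u_\epsilon=0$. Set $m_\epsilon=m_{u_\epsilon}$ and $h_\epsilon=h_{u_\epsilon}$. Summing the three component inequalities, where the second holds with equality, gives the claimed variational inequality for every $(\mu,v,k)\in\cX^+$.

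The main difficulty I anticipate is Step 3, together with verifying the integrability in Step 1 from the precise growth assumptions.
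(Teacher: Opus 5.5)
Your Steps 1, 2 and 4 are sound. The formulas for $m_u$ and $h_u$, and the convex potential $\Phi=G\circ H$, are what the paper uses. Step 3, however, fails as stated, and it carries the existence argument.

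\textbf{The gap.} Take $u\equiv c$ with $c<0$. Then $Du=0$ and $h_u=\epsilon^{-\gamma}(c_+)^{\gamma-1}=0$ on $\Gamma_D$. With $m_0:=g^{-1}(H(x,0))$ this gives
\[
\langle B_\epsilon c,v\rangle=\int_\Omega m_0\,D_pH(x,0)\cdot Dv\dx-\int_{\Gamma_N}jv\ds=\langle B_\epsilon 0,v\rangle
\qquad\text{for all } v\in W^{1,\gamma}(\Omega).
\]
So $B_\epsilon$ is constant on the half-line $\{c\le 0\}$. For every base point $\varphi$, the quantity $\langle B_\epsilon c-B_\epsilon\varphi,c-\varphi\rangle=\langle B_\epsilon 0-B_\epsilon\varphi,c-\varphi\rangle$ is affine in $c$. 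Its ratio with $\|c-\varphi\|_{W^{1,\gamma}(\Omega)}$ therefore stays bounded as $c\to-\infty$. Concretely, $\langle B_\epsilon c,c\rangle=|c|\int_{\Gamma_N}j\ds$, while $\|c\|_{W^{1,\gamma}(\Omega)}=|c|\,|\Omega|^{1/\gamma}$.

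Linear growth gives a bounded quotient, so the step from ``grows linearly'' to ``$\langle B_\epsilon u,u\rangle/\|u\|\to\infty$'' does not follow. Definition~\ref{def:coercivity} fails for every choice of $\varphi$, and Lemma~\ref{lem:UnboundedMintyExistence} cannot be invoked. This is half of the constant-shift degeneracy of Subsection~\ref{subsec:failOfCoercivity}, surviving in your reduced operator: any $u$ with $u\le 0$ on $\Gamma_D$ can be shifted downward without changing $B_\epsilon u$. The obstruction sits in the pure negative-constant direction, not in the mixed regime you flagged.

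\textbf{How to repair it.} The weaker statement $\langle B_\epsilon u,u\rangle\to+\infty$ as $\|u\|_{W^{1,\gamma}(\Omega)}\to\infty$ is true. Suppose $\langle B_\epsilon u_n,u_n\rangle\le M$.
\begin{enumerate}
\item[(i)] Combine the bulk bound $\int_\Omega m_uD_pH(x,Du)\cdot Du\dx\ge C^{-1}\|Du\|^\gamma_{L^\gamma(\Omega)}-C$ with the estimate of Lemma~\ref{lem:upBoundjuPSIneq1}, run with small weights in front of both $\|Du\|^\gamma_{L^\gamma(\Omega)}$ and $\|u_+\|^\gamma_{L^\gamma(\Gamma_D)}$. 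This bounds $\|Du_n\|_{L^\gamma(\Omega)}$ and $\|(u_n)_+\|_{L^\gamma(\Gamma_D)}$, and hence $|\int_{\Gamma_N}ju_n\ds|$.
\item[(ii)] Lemma~\ref{lem:poincare_j} then bounds $\|u_n\|_{W^{1,\gamma}(\Omega)}$.
\end{enumerate}
This weaker property suffices if you replace Lemma~\ref{lem:UnboundedMintyExistence} by a truncation argument. Apply Lemma~\ref{lem:BoundedMintyExistence} on the ball $\{\|u\|\le R\}$ to get $u_R$. Testing with $v=0$ gives $\langle B_\epsilon u_R,u_R\rangle\le 0$, so $\|u_R\|<R$ for $R$ large. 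Since $u_R$ is then an interior point, the variational inequality gives $B_\epsilon u_R=0$. Equivalently, $B_\epsilon$ is the derivative of the convex functional $\int_\Omega\Phi(x,Du)\dx-\int_{\Gamma_N}ju\ds+\frac{\epsilon^{-\gamma}}{\gamma}\int_{\Gamma_D}u_+^\gamma\ds$, which you could minimize directly; that is the variational route the paper deliberately avoids.

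\textbf{How the paper avoids the issue.} It passes to $\cQ$ and pins the representative $u^*$ by $\int_{\Gamma_D}(u^*)_+^{\gamma-1}\ds=\epsilon^\gamma\int_{\Gamma_N}j\ds$ (Lemma~\ref{lem:choiceOfrepForbaru}). The boundary contribution then becomes $\int_{\Gamma_D}(h_{u^*}-h_{0^*})(u^*-0^*)\ds\ge 0$. Genuine superlinear coercivity, $\langle\bar B\bar u-\bar B\bar 0,\bar u\rangle\ge C^{-1}\|Du\|^\gamma_{L^\gamma(\Omega)}-C$, holds in the quotient norm. The cost is proving continuity of the pinning constant (Lemma~\ref{lem:HemicontoOfu*}).

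Your repaired route avoids the quotient and the pinning lemma. In exchange it needs the weak-coercivity form of Browder--Minty and the inflow-anchored Poincar\'e inequality.
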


Our second result establishes the existence of a solution to the variational inequality associated with the limit operator \( A_0 \). In this limit, the boundary flux \( h \) relaxes to a distribution in the dual trace space \( W^{-1+\frac{1}{\gamma},\gamma'}(\Gamma_D) \).
Here we identify $k \in L^{\gamma'}(\Gamma_D)$ with its canonical embedding into $W^{-1+\frac{1}{\gamma},\gamma'}(\Gamma_D)$ when forming $k - h$, so that the duality pairing is well-defined.

\begin{theorem} \label{thm:ExistInSepMFG}
	Suppose that Assumptions~\ref{assumset:D} and \ref{assumset:S} hold. Then, there exists a triplet
	\[
	(m,u,h) \in L^{\beta+1}(\Omega)\times W^{1,\gamma}(\Omega) \times W^{-1+\frac{1}{\gamma},\gamma'}(\Gamma_D)
	\]
	such that \( m \ge 0 \), \( h \) is a nonnegative distribution, and
	\[
	\left\langle A_0 \begin{bmatrix} m \\ u \\ h \end{bmatrix}, \begin{bmatrix} \mu-m \\ v-u \\ k-h \end{bmatrix} \right\rangle \geq 0 \qquad \forall (\mu,v,k) \in \cX^+.
	\]
\end{theorem}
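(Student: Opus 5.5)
We obtain the solution of the limit problem as a weak limit of the penalized solutions $(m_\epsilon,u_\epsilon,h_\epsilon)\in\cX^+$ given by Theorem~\ref{thm:ExistenceOfSolutionsForSepA_ep_Intro}. The limit passage is carried out by a Minty-type argument, which exploits the monotonicity of $A_0$ on $\cX^+$ rather than any strong compactness.

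\textbf{Step 1: uniform bounds.} We test the variational inequality for $A_\epsilon$ with $(\mu,v,k)=(0,u_\epsilon+c,0)$ and with $(2m_\epsilon,u_\epsilon,2h_\epsilon)$, and separately with a fixed reference triplet $(\mu_0,v_0,k_0)\in\cX^+$. Here $v_0\le 0$ is chosen with $v_0$ negative on $\Gamma_D$ so that it controls the boundary flux, and $k_0$ satisfies $\int_{\Gamma_D}k_0=\int_{\Gamma_N}j$.

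The last choice also fixes the shift degeneracy. Testing with $v=u_\epsilon\pm 1$ shows that $\int_{\Gamma_D}h_\epsilon=\int_{\Gamma_N}j$, and the inequality in the $h$-slot gives complementarity $\epsilon^{\gamma'}h_\epsilon^{\gamma'-1}-u_\epsilon\ge 0$ with equality where $h_\epsilon>0$.

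Using the growth assumptions on $H$ and $g$ (Assumptions~\ref{assumset:S}, with exponents from Assumption~\ref{assume:D.1}), the standard MFG estimate obtained by pairing $A_\epsilon(m_\epsilon,u_\epsilon,h_\epsilon)$ against the difference with $(\mu_0,v_0,k_0)$ should give bounds, uniform in $\epsilon$, on the following quantities:
\begin{itemize}
\item $\|m_\epsilon\|_{L^{\beta+1}}$;
\item $\|Du_\epsilon\|_{L^\gamma}$, through $\int m_\epsilon |Du_\epsilon|^\gamma$ and $H(x,Du_\epsilon)=g(m_\epsilon)$ on $\{m_\epsilon>0\}$;
\item $\epsilon^{\gamma'}\|h_\epsilon\|^{\gamma'}_{L^{\gamma'}}$.
\end{itemize}
A Poincaré inequality with the trace on $\Gamma_D$, combined with $u_\epsilon\le \epsilon^{\gamma'}h_\epsilon^{\gamma'-1}$ on $\Gamma_D$ and the mean-zero normalization, should then bound $\|u_\epsilon\|_{W^{1,\gamma}}$.

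Finally, $h_\epsilon$ is bounded in the dual trace space $W^{-1+\frac1\gamma,\gamma'}(\Gamma_D)$. This follows from the second component of the inequality: for any $w$ in the trace space, extend $w$ to $v\in W^{1,\gamma}$ and test. The pairing $\langle h_\epsilon,w\rangle$ is then expressed through $\int m_\epsilon D_pH\cdot Dv$ and $\int_{\Gamma_N} jv$, both of which are already controlled.

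\textbf{Step 2: extraction and Minty.} By reflexivity we extract a subsequence with $m_\epsilon\rightharpoonup m$, $u_\epsilon\rightharpoonup u$ and $h_\epsilon\rightharpoonup^* h$. Nonnegativity is preserved in the limit, so $h$ is a nonnegative distribution.

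For any $(\mu,v,k)\in\cX^+$, monotonicity of $A_\epsilon$ on $\cX^+$ gives
\[
\langle A_\epsilon(\mu,v,k),(\mu,v,k)-(m_\epsilon,u_\epsilon,h_\epsilon)\rangle\ge\langle A_\epsilon(m_\epsilon,\dots),\dots\rangle\ge 0 .
\]
On the left side, $A_\epsilon(\mu,v,k)=A_0(\mu,v,k)+(0,0,\epsilon^{\gamma'}k^{\gamma'-1})$, and the correction term vanishes as $\epsilon\to0$ because $k$ is fixed. Every remaining term is linear in the weakly converging sequence, so we can pass to the limit and obtain
\[
\langle A_0(\mu,v,k),(\mu,v,k)-(m,u,h)\rangle\ge0\qquad\text{for all }(\mu,v,k)\in\cX^+ .
\]

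\textbf{Step 3: Minty's trick.} We substitute $(\mu,v,k)=(m,u,h)+t\big((\mu,v,k)-(m,u,h)\big)$, divide by $t$, and let $t\to0^+$. Hemicontinuity of $A_0$ then yields the statement.

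This step is the main obstacle. Because $h$ lies only in $W^{-1+\frac1\gamma,\gamma'}$, the segment with $h$ does not stay in $\cX^+$. I would handle this in two ways:
\begin{itemize}
\item First, note that $A_0$ does not depend on $h$ except through the term $\int_{\Gamma_D} h\,(\cdot)$. That term is affine, so the $h$-direction needs no hemicontinuity and the Minty step can be performed in $(m,u)$ alone with $k$ fixed.
\item Second, hemicontinuity in $m$ and $u$ (the directions $m+t(\mu-m)$ and $u+t(v-u)$) requires integrability of $mD_pH(x,Du+tD(v-u))\cdot Dw$ and of $H(x,Du)$ against $\mu-m$, which follows from the growth bounds on $D_pH$ and on $H$ in Assumptions~\ref{assumset:S}.
\end{itemize}
If some product such as $m|Du|^{\gamma-1}|Dv|$ is not integrable, I would restrict to smooth bounded test functions and conclude by density.
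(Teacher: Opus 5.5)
Your overall architecture (uniform bounds, weak limits, a Minty inequality from the monotonicity of $A_\epsilon$, then Minty's trick) matches the paper's. Bounding $m_\epsilon$, $Du_\epsilon$ and $\epsilon^{\gamma'}\|h_\epsilon\|^{\gamma'}_{L^{\gamma'}(\Gamma_D)}$ by monotonicity against a reference triplet is a workable substitute for the paper's energy identity. However, two steps fail as written.

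\textbf{The bound on $\|u_\epsilon\|_{W^{1,\gamma}(\Omega)}$.} No mean-zero normalization is available. The penalized inequality is not shift invariant, because the third slot contains $-u$. The additive constant of $u_\epsilon$ is pinned by $\int_{\Gamma_D}(u_\epsilon)_+^{\gamma-1}\,ds=\epsilon^{\gamma}\int_{\Gamma_N}j\,ds$. Your reference estimate is blind to constants by design, since you chose $\int_{\Gamma_D}k_0=\int_{\Gamma_N}j$. The bound $u_\epsilon\le\epsilon^{\gamma'}h_\epsilon^{\gamma'-1}$ controls only $(u_\epsilon)_+$ on $\Gamma_D$. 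So nothing you have excludes $u_\epsilon\to-\infty$; the compatibility condition does not either, because its right-hand side tends to $0$. The missing lower bound comes from $\langle A_\epsilon z_\epsilon,z_\epsilon\rangle=0$, i.e. $\int_{\Gamma_N}ju_\epsilon\,ds=\int_\Omega m_\epsilon D_pH(x,Du_\epsilon)\cdot Du_\epsilon\,dx+\epsilon^{\gamma'}\|h_\epsilon\|^{\gamma'}_{L^{\gamma'}(\Gamma_D)}$. Its right side is bounded, and combined with the inflow-anchored Poincar\'e inequality of Lemma~\ref{lem:poincare_j} it gives the $W^{1,\gamma}$ bound. This is how the paper obtains compactness of $u_\epsilon$.

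\textbf{The Minty step.} Written out, your limit inequality from Step 2 reads
\[
\int_\Omega\bigl(-H(x,Dv)+g(\mu)\bigr)(\mu-m)\,dx+\int_\Omega \mu D_pH(x,Dv)\cdot D(v-u)\,dx-\int_{\Gamma_N}j(v-u)\,ds+\langle h,v\rangle_{\Gamma_D}-\int_{\Gamma_D}k\,u\,ds\ge 0 .
\]
Here $h$ is coupled to the test $v$, and the test $k$ is coupled to $u$. If you move $(\mu,v)$ along the segment from $(m,u)$ with $k$ fixed, the term $\langle h,u\rangle_{\Gamma_D}-\int_{\Gamma_D}ku\,ds$ survives at order $t^0$. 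It is nonnegative (it is the inequality at $t=0$), which is the useless sign once you divide by $t$. So affinity in $h$ is not the point: your test point simply does not lie on a segment through $(m,u,h)$.

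To close the argument you need two facts about the limit that the proposal never establishes:
\begin{itemize}
\item $u\le0$ on $\Gamma_D$. This follows from $\epsilon^{\gamma'}\|h_\epsilon\|^{\gamma'}_{L^{\gamma'}}=\epsilon^{-\gamma}\|(u_\epsilon)_+\|^{\gamma}_{L^\gamma(\Gamma_D)}\le C$, or from testing with $(m,u,\lambda k)$ and letting $\lambda\to\infty$.
\item The complementarity $\langle h,u\rangle_{\Gamma_D}=0$. Testing with $(m,u,0)$ gives $\ge0$, and the weak limit of $\int_{\Gamma_D}h_\epsilon u\,ds\le0$ gives $\le0$.
\end{itemize}
With these, Minty's trick with $k=0$ yields the claim for $k=0$, and the extra term $-\int_{\Gamma_D}ku\,ds\ge0$ extends it to every $k\ge0$.

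The paper instead applies Minty's lemma on the cone enlarged to nonnegative $h\in W^{-1+\frac1\gamma,\gamma'}(\Gamma_D)$. There $h+t(k-h)$ is admissible and the cross terms cancel. The Minty inequality extends to that cone because it sees the test flux only through $-\langle k,u\rangle_{\Gamma_D}\ge0$, which again uses $u\le0$ on $\Gamma_D$ (proved in the paper's Step 1).

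A minor point: in Step 2 the penalty term is paired with $k-h_\epsilon$, and $h_\epsilon$ is unbounded in $L^{\gamma'}$, so ``$k$ is fixed'' is not enough. The term is harmless, either because $-\epsilon^{\gamma'}\int_{\Gamma_D}k^{\gamma'-1}h_\epsilon\,ds\le0$, or via $\|h_\epsilon\|_{L^{\gamma'}}\le C\epsilon^{-1}$ as in the paper.
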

Theorem~\ref{thm:ExistInSepMFG} is obtained by solving the penalized variational inequalities for \(A_\epsilon\) using Theorem 
~\ref{thm:ExistenceOfSolutionsForSepA_ep_Intro} and passing to the limit as \(\epsilon\to 0^+\).
The remainder of this paper is organized as follows. In Section~\ref{sec:Nota&Assum}, we introduce our notation and assumptions, and review preliminary results including the Browder--Minty theorem and auxiliary estimates. In Section~\ref{sec:MonoOpSepCase}, we motivate the penalized MFG operator via a variational principle and rigorously define its functional framework. In Section~\ref{sec:KeyPropSepCase}, we establish the fundamental properties of the operator and detail its lack of global coercivity. In Section~\ref{sec:SepExiste}, we address this failure of coercivity via a quotient-space formulation and establish Theorem~\ref{thm:ExistenceOfSolutionsForSepA_ep_Intro}. Finally, in Section~\ref{sec:backToMFG}, we pass to the vanishing penalty limit to establish Theorem~\ref{thm:ExistInSepMFG}.

\paragraph{Acknowledgments:} The authors would like to thank their colleague Melih Ucer (KAUST) for valuable discussions that helped reduce the redundancy in our assumptions and improve the exposition. The first author also wishes to extend sincere thanks to Hicham Kouhkouh (Graz) for recommending a key reference that motivated the direction of this work.

\section{Notation, Assumptions, and Preliminary Results} \label{sec:Nota&Assum}

In this section, we establish the functional and technical framework necessary for the analysis of the Mean-Field Game operator. We begin by defining the notation and conventions used throughout the paper. Next, we specify the exponent regime and structural conditions on $H$ and $g$—requirements that ensure the operator $A_\epsilon$ is well-defined, monotone, and coercive enough to satisfy the hypotheses of the Browder–Minty theorem. These conditions, specifically the polynomial growth assumptions, are standard in the variational MFG literature (see, e.g., \cite{Card1order}). Finally, we establish several auxiliary bounds on the Hamiltonian and its gradient that are essential for the continuity and convergence arguments in the subsequent sections.

\subsection{Notation}
Before outlining the main assumptions, we discuss our notation and conventions throughout the paper.
\begin{enumerate}[label=\textbullet, leftmargin=*]
   \item \textbf{Differentiation:} Any differential operator without a subscript, such as \( Du \) or \( \Div F \), denotes differentiation with respect to the spatial variable \( x \). Differentiation with respect to any other variable is explicitly indicated (e.g., \( D_p H(x, p) \)).

   \item \textbf{Constants:} An inequality involving a constant \(C>0\) is understood to hold for some sufficiently large \(C\), independent of all free variables; dependencies on parameters are indicated by subscripts (e.g., $C_\delta$). The value of \(C\) may change from line to line.

   \item \textbf{Positive/Negative Parts:} For any quantity \( q \), we denote \( q_+ := \max\{q,0\} \) and \( q_- := \max\{-q,0\}\).

    \item \textbf{Conjugates:} We denote the H\"older conjugate of $q > 1$ by $q' := q/(q-1)$.

    \item \textbf{Extended Inverse:} 
    Throughout, $g^{-1}$ denotes the \emph{extended inverse} introduced in Definition~\ref{def:extendedInverse} (not the usual inverse on the range of $g$).

\item \textbf{Surface Measure:}
On the boundary $\Gamma$, we write $\ds$ for the surface measure $\mathrm d\cH^{d-1}$; thus
$\int_{\Gamma} f\,\ds := \int_{\Gamma} f\,\mathrm d\cH^{d-1}$.
For measurable \(E\subseteq \Gamma\), we also write \(|E|:=\int_E 1\,\ds=\cH^{d-1}(E)\).

\item \textbf{Traces:} 
    For $u \in W^{1,\gamma}(\Omega)$, we write $Tu$ or $u|_{\Gamma}$ interchangeably for its trace on $\Gamma$, and $u|_{\Gamma_D}$ for the restriction to $\Gamma_D$.

\item \textbf{Boundary integrals and normal traces:}
Unless explicitly stated otherwise, boundary integrals $\int_{\Gamma_*} u\,\ds$ for $u\in W^{1,\gamma}(\Omega)$ are understood using the trace $Tu$.
When we write $F\cdot \nu$ for a vector field $F$, we mean the \emph{normal trace} in $W^{-1+\frac{1}{\gamma},\gamma'}(\Gamma)$ (when it is well-defined, e.g.\ if $F\in L^{\gamma'}(\Omega;\R^d)$ and $\Div F\in L^{\gamma'}(\Omega)$), paired with traces via $\langle\cdot,\cdot\rangle_\Gamma$.

    \item \textbf{Duality:} 
    The pairing between a Banach space $\cX$ and its dual $\cX'$ is denoted $\langle \cdot, \cdot \rangle$, which reduces to $\int_{\Omega} fg \dx$ or $\int_{\Gamma} fg \ds$ for Lebesgue and standard Sobolev spaces. For boundary terms in $W^{-1+1/\gamma,\gamma'}(\Gamma)$, we write $\langle \cdot, \cdot \rangle_{\Gamma}$ for the trace-space duality pairing.

    \item \textbf{Convergence:} We write $u_n\rightharpoonup u$ (resp. $u_n\to u$) for weak (resp. strong) convergence.

\end{enumerate}

\subsection{Assumptions} \label{subsec:Assumptions}

The assumptions detailed here address the problem data and the separable structure of the Hamiltonian, ensuring the well-posedness of MFG System~\ref{mfg:2} and the underlying control problem.

\subsubsection{The problem data}

We begin with our assumptions on the basic parameters and data. We denote these assumptions by {\textbf{D}} (for ``data'').
\begin{customAssumsA}{\textbf{D}} \label{assumset:D} \,
  \begin{enumerate}[label=\textbf{D.\arabic*}, leftmargin=*]

  \item \label{assume:D.1} Our assumptions are formulated in terms of two key exponents:
  \begin{equation}\label{eq:parameters1}
    \alpha > 1 \, \text{ and } \, \beta > 0.
  \end{equation}
  To simplify our presentation, we additionally introduce an auxiliary exponent:
  \begin{equation}\label{eq:parameters2}
  \gamma := \dfrac{\beta + 1}{\beta} \alpha. 
  \end{equation}
Note that  $\gamma>\alpha>1$ since $\beta>0$ and $\alpha>1$. This choice balances the growth of the coupling $g(m)$ and the Hamiltonian $H(Du)$ in the energy estimates.

  \item \label{assume:D.2} The incoming flow \( j \in L^{\gamma'}(\Gamma_N) \) is nonnegative and not identically zero; that is,
  \[
  j \geq 0 \quad \text{ and } \quad j\not \equiv 0.
  \]
  \end{enumerate}
  \end{customAssumsA}

We recall that $\Gamma_N\subset\partial\Omega$ denotes the (Neumann/inflow) boundary portion where the incoming flux is prescribed. The integrability condition in the preceding assumption is required for the weak formulation of the transport equation, ensuring the boundary source term is well-defined.

\subsubsection{The Hamiltonian}

The following assumptions, denoted by \(\mathbf{S}\) (for “separable”), govern the regularity and growth of the Hamiltonian $H$ and the coupling term $g$. These conditions are essential for the well-posedness of MFG System~\ref{mfg:2} and distinguish our framework from that of a companion paper on nonseparable MFGs \cite{alharbiNonSepMonoOperator2025}.

\begin{customAssumsA}{\textbf{S}} \label{assumset:S}
\,
\begin{enumerate}[label=\textbf{S.\arabic*}, start=0]
  \item \label{assume:S.0}
  \begin{enumerate}[label= \textbf{\alph*}., ref=\textbf{\alph*}]
    \item\label{assume:S.0.a} The coupling term \( g:[0, \infty) \to \mathbb{R} \) is continuous.

    \item\label{assume:S.0.b} The Hamiltonian \(H:\bar{\Omega} \times \mathbb{R}^d\to \R\) is measurable in \(x\) and \(H(x,\cdot)\in C^1(\mathbb{R}^d)\) for almost every \(x\in\Omega\).
  \end{enumerate}

  \item[]
  This regularity guarantees that the terms in the Hamilton-Jacobi and transport equations are well-defined.

  \item \label{assume:S.1} The coupling term $g$ is strictly increasing.

\item[] 
This assumption represents players' aversion to congestion
and ensures that \(g^{-1}\) exists (see Definition~\ref{def:extendedInverse}). 

  \item \label{assume:S.2} The function \( H \) is convex in \( p \).

  \item[] This convexity is used twice: to (i) obtain monotonicity of the $H$-terms and (ii) control $m D_p H$ via growth/coercivity estimates. 

  \item \label{assume:S.3} There exists a constant $C>1$ such that, for all $m \in [0,\infty)$
  \[
    		C^{-1}m^{\beta} -C \leq g(m) \leq C m^{\beta} + C.
  \]

  \item[]
  The lower bound establishes the coercivity of the coupling term, while the upper bound controls its growth rate. Both properties are essential for the continuity and coercivity arguments used in the analysis of the MFG operator.

  \item \label{assume:S.4} There is a constant $C>1$ such that
    \begin{enumerate}[label= \textbf{\alph*}.]
    \item \quad \( |D_pH(x,p)| \leq C|p|^{\a-1} + C \) \enskip for a.e.\ \(x\in\Omega\) and all \(p\in\mathbb{R}^d\).

    \item \quad \( H(x,0) \leq C \) \enskip for a.e. $x \in \Omega$.
    \end{enumerate}

  \item[]
  This assumption provides an upper bound on the growth of the Hamiltonian and its derivative (see Remark~\ref{rmk:HamiltonImplicitBound}). This control is essential for two reasons: it guarantees that the underlying optimal control problem is well-defined, and it is a key ingredient in the proof of continuity for the MFG operator.
  \end{enumerate}

  \begin{remark} \label{rmk:HamiltonImplicitBound}
    Due to the fundamental theorem of calculus, for a.e. $x\in \Omega$, Assumption~\ref{assume:S.4} gives the following bound on the Hamiltonian
    \[
    H(x,p) \leq H(x,0) + \int_{0}^{1} |D_pH(x,tp)| |p| \dt \leq C|p|^{\alpha} + C.
    \]
  \end{remark}

The following coercivity assumption is stated in three forms; Proposition~\ref{prop:equivAssumS.5} shows they are equivalent under Assumptions~\ref{assume:S.2} and \ref{assume:S.4}.

  \begin{enumerate}[label=\textbf{S.\arabic*}, start=5]
  \item \label{assume:S.5} There exists a constant $C>1$ such that, for a.e. $x\in\Omega$ and all $p\in \mathbb{R}^d$, one of
  the following conditions holds.

  \begin{enumerate}[label= \textbf{\alph*}., ref=\textbf{\alph*}]
  \item \label{assume:S.5.a} \quad \( H(x,p) \geq C^{-1} |p|^\alpha-C \).

  \item \label{assume:S.5.b} \quad \( D_pH(x,p) \cdot p \geq C^{-1} |p|^\alpha-{C} \) \quad and \enskip\, \( H(x,p) \geq -C \).

  \item \label{assume:S.5.c} \quad \( - H(x,p) + D_pH(x,p) \cdot p \geq C^{-1}|p|^{\a}-C \) \quad and \enskip \, \( H(x,p) \geq -C \).

  \end{enumerate}

\item[]
Forms  \ref{assume:S.5.a} and \ref{assume:S.5.b}  establish growth bounds for the operator analysis. 
  \end{enumerate}
\end{customAssumsA}

\begin{proposition}
	\label{prop:equivAssumS.5}
	Under  Assumptions~\ref{assume:S.2}  and \ref{assume:S.4}, the three forms of \ref{assume:S.5} are equivalent.
\end{proposition}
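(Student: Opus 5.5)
The plan is to prove the cycle \ref{assume:S.5.a}$\Rightarrow$\ref{assume:S.5.b}$\Rightarrow$\ref{assume:S.5.c}$\Rightarrow$\ref{assume:S.5.a}. Throughout, I fix $x$ (a.e.) and $p\in\R^d$ and work with the one-variable function $\varphi(t):=H(x,tp)$ for $t\ge 0$. Then $\varphi'(t)=D_pH(x,tp)\cdot p$, and by Assumption~\ref{assume:S.2} $\varphi$ is convex, so $\varphi'$ is nondecreasing. Constants may change from line to line, as in the paper's convention.

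\emph{Step 1: \ref{assume:S.5.a}$\Rightarrow$\ref{assume:S.5.b}.} The bound $H\ge -C$ follows directly from \ref{assume:S.5.a}, since $C^{-1}|p|^\alpha-C\ge -C$. For the first inequality I use convexity in the form $H(x,0)\ge H(x,p)-D_pH(x,p)\cdot p$. Combined with \ref{assume:S.4}(b), this gives
\[
D_pH(x,p)\cdot p\ \ge\ H(x,p)-C\ \ge\ C^{-1}|p|^\alpha-2C .
\]

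\emph{Step 2: \ref{assume:S.5.b}$\Rightarrow$\ref{assume:S.5.c}.} The bound $H\ge-C$ is already part of \ref{assume:S.5.b}. For the first inequality I write
\[
-\varphi(1)+\varphi'(1)=-\varphi(0)+\int_0^1\big(\varphi'(1)-\varphi'(t)\big)\dt .
\]
By \ref{assume:S.4}(b), $-\varphi(0)\ge -C$. Since $\varphi'$ is monotone, the integrand is nonnegative. For any $s\in(0,1)$ I can therefore discard $[s,1]$ and bound the integral below by $s\big(\varphi'(1)-\varphi'(s)\big)$. Here \ref{assume:S.5.b} gives $\varphi'(1)\ge C^{-1}|p|^\alpha-C$. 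Assumption~\ref{assume:S.4}(a) gives
\[
\varphi'(s)\le |D_pH(x,sp)|\,|p|\le C s^{\alpha-1}|p|^\alpha+C|p| .
\]
This choice of $s$ is the main point of the argument. A naive choice such as $s=1/2$ fails, because the constants in \ref{assume:S.4} and \ref{assume:S.5.b} need not be comparable. Instead, since $\alpha>1$, I pick $s$ small enough that $Cs^{\alpha-1}\le \tfrac{1}{2}C^{-1}$. Then
\[
\varphi'(1)-\varphi'(s)\ \ge\ \tfrac12 C^{-1}|p|^\alpha-C|p|-C .
\]
Young's inequality absorbs the linear term $C|p|$ into $\tfrac14C^{-1}|p|^\alpha$ plus a constant, again using $\alpha>1$. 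Multiplying by the fixed $s>0$ gives \ref{assume:S.5.c}.

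\emph{Step 3: \ref{assume:S.5.c}$\Rightarrow$\ref{assume:S.5.a}.} Set $\psi(t):=t\varphi'(t)-\varphi(t)$. Applying \ref{assume:S.5.c} at the point $tp$ gives $\psi(t)\ge C^{-1}t^\alpha|p|^\alpha-C$. Moreover, $\big(\varphi(t)/t\big)'=\psi(t)/t^2$ for $t>0$. Integrating from $1/2$ to $1$ yields
\[
\varphi(1)\ \ge\ 2\varphi(1/2)+C^{-1}|p|^\alpha\int_{1/2}^1 t^{\alpha-2}\dt - C\int_{1/2}^1 t^{-2}\dt .
\]
The first integral is a positive constant depending only on $\alpha$, and the second equals $1$. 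Finally, the lower bound $H\ge -C$ from \ref{assume:S.5.c} gives $\varphi(1/2)\ge -C$. Together these give $H(x,p)\ge c|p|^\alpha-C$, which is \ref{assume:S.5.a} after renaming constants.

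Step 2 is the only step that uses the upper growth \ref{assume:S.4}(a). I expect it to be the only delicate one, precisely because of the constant mismatch handled by taking $s$ small. The other two steps use only convexity and \ref{assume:S.4}(b) or the lower bound on $H$.
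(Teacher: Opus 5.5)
Your proof is correct and takes essentially the paper's route: the same cycle of implications, the same tangent-line inequality at $0$ for (a)$\Rightarrow$(b), and the same small-scaling trick exploiting $\alpha>1$ for (b)$\Rightarrow$(c). There the paper writes $-H(x,p)+D_pH(x,p)\cdot p\ge \sigma D_pH(x,p)\cdot p-H(x,\sigma p)$ and bounds $H(x,\sigma p)\le C\sigma^{\alpha}|p|^{\alpha}+C$, whereas you in effect use $\varphi(s)\le\varphi(0)+s\varphi'(s)$ and control $\varphi'(s)$ directly; for (c)$\Rightarrow$(a) both of you integrate along the ray over $[1/2,1]$, and your identity $(\varphi(t)/t)'=\psi(t)/t^2$ is just a tidier bookkeeping of the paper's splitting of $D_pH(x,tp)\cdot p$ that needs $H\ge -C$ only at $p/2$.
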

\begin{proof}
	We show that $\text{\ref{assume:S.5.a}} \Rightarrow \text{\ref{assume:S.5.b}} \Rightarrow \text{\ref{assume:S.5.c}} \Rightarrow \text{\ref{assume:S.5.a}}$.

    \textbf{(a $\Rightarrow$ b)}. By the convexity of $H$ in $p$ (\ref{assume:S.2}), we have that \[H(x,0) \ge H(x,p) + D_pH(x,p)\cdot(0-p).\] Rearranging gives \[D_pH(x,p)\cdot p \ge H(x,p) - H(x,0).\] Using \ref{assume:S.5.a} for the lower bound on $H(x,p)$ and \ref{assume:S.4}.b for the upper bound on $H(x,0)$, we get \[D_pH(x,p)\cdot p \ge (C^{-1}|p|^\alpha - C) - C = C^{-1}|p|^\alpha - 2C.\] After adjusting the constant $C$, this gives the first inequality in \ref{assume:S.5.b}. The second, $H(x,p) \ge -C$, is an immediate consequence of \ref{assume:S.5.a}.

	\textbf{(b $\Rightarrow$ c)}. 
   Let $\sigma \in(0,1)$. By convexity (\ref{assume:S.2}), \[H(x,p) - H(x,\sigma p) \le (1-\sigma) D_pH(x,p)\cdot p.\]
     We rearrange this to get \[-H(x,p) + D_pH(x,p)\cdot p \ge \sigma D_pH(x,p)\cdot p - H(x,\sigma p).\] 
     To the right-hand side of this inequality, 
we apply the lower bound from \ref{assume:S.5.b} to the first term and the upper bound from Remark~\ref{rmk:HamiltonImplicitBound} (derived from \ref{assume:S.4}) to the second:
	\[
	-H(x,p) + D_pH(x,p)\cdot p \ge \sigma (C^{-1}|p|^\alpha - C) - (C|\sigma p|^\alpha + C) = (\sigma C^{-1} - C\sigma^\alpha)|p|^\alpha - C(\sigma+1).
	\]
	We can choose $\sigma > 0$ small enough such that the coefficient $(\sigma C^{-1} - C\sigma^\alpha)$ is positive. Adjusting the constants then yields the required inequality in \ref{assume:S.5.c}. The condition $H(x,p) \ge -C$ is given directly by \ref{assume:S.5.b}. 
   
	\textbf{(c $\Rightarrow$ a)}. We write $H(x,p) = H(x,p/2) + \int_{1/2}^1 D_pH(x,tp)\cdot p \dt$. Rearranging the integrand and applying \ref{assume:S.5.c} gives
	\begin{align*}
		H(x,p) &= H(x,p/2) + \int_{1/2}^1 \frac{1}{t} \left( D_pH(x,tp)\cdot(tp) - H(x,tp) \right)\dt + \int_{1/2}^1 \frac{1}{t} H(x,tp) \dt \\
		&\ge -C + \int_{1/2}^1 \frac{1}{t}\big(C^{-1}|tp|^\alpha - C\big)\dt + \int_{1/2}^1 \frac{1}{t}(-C)\dt \\
		&\ge -C - 2C\log(2) + C^{-1}|p|^\alpha \int_{1/2}^1 t^{\alpha-1}\dt = C^{-1} \frac{1-2^{-\alpha}}{\alpha}|p|^\alpha - C'.
	\end{align*}
	Adjusting constants gives \ref{assume:S.5.a}.
\end{proof}

The convexity of $H$ in $p$ (Assumption \ref{assume:S.2}) and its superlinear growth (Assumption \ref{assume:S.5}) ensure that the Legendre--Fenchel transform is well-defined. This allows us to introduce the Lagrangian $L(x,v)$ as the convex conjugate of $H(x,\cdot)$:$$L(x, v) = \sup_{p \in \mathbb{R}^d} \left[- p \cdot v - H(x, p) \right].$$Under these conditions, $L(x,v)$ is a proper, lower semi-continuous, and convex function that is also coercive in $v$. Furthermore, the involution property of the conjugate holds, dually recovering the Hamiltonian:$$H(x, p) = \sup_{v \in \mathbb{R}^d} \left[- v \cdot p - L(x, v) \right].$$
Form \ref{assume:S.5.c} of Assumption \ref{assume:S.5} corresponds to the coercivity of the Lagrangian $L(x,v^*) = -H + p \cdot D_pH$.

\paragraph{Example.} A prototype Hamiltonian for which the assumptions hold is
\[
H(x,p)= (|p|^2+1)^{\alpha/2} +1,
\]
with the following coupling term
\[
g(m)= (m^2+1)^{\beta/2}.
\]
Here $g(0) = 1$, so $g^{-1}(z) = 0$ for $z < 1$.
For $\alpha$ and $\beta$ as in Assumption \ref{assume:D.1},
these functions satisfy the growth, convexity, and coercivity conditions detailed in Assumptions~\ref{assumset:S}.

\subsection{Monotone Operators and the Browder--Minty Theorem} \label{sec:MonoMinty}

Following \cite{KiSt00}, 
we recall the Browder--Minty theorem for variational inequalities. Let \( \cX \) be a reflexive Banach space and let \( \cX' \) be its topological dual, with the duality pairing denoted by \( \langle\cdot,\cdot\rangle \). We begin by introducing the primary notions needed to apply the theorem: monotonicity, coercivity, and continuity on finite-dimensional subspaces.

\begin{definition}\label{def:monotonicity}
Let \( \mathcal{K} \subseteq \cX \) be a nonempty, closed, and convex set. A mapping \( A : \mathcal{K} \to \cX' \) is \emph{monotone} if
\[
\langle Au - Av,\; u - v \rangle \;\ge\; 0 \quad
\text{for all }u,v \in \mathcal{K}.
\]
It is \emph{strictly monotone} if the above inequality is strict for \( u \neq v \); that is,
\[
\langle Au - Av,\; u - v \rangle \;=\; 0  \;\; \implies \;\; u = v.
\]
\end{definition}

\begin{definition}\label{def:continuity}
Let \( \mathcal{K} \subseteq \cX \)  be a nonempty, closed, and convex set. A mapping \( A : \mathcal{K} \to \cX' \) is \emph{continuous on finite-dimensional subspaces} if, for every finite-dimensional subspace $\mathcal{M} \subset \cX$, the restriction
\[
A\bigl|_{\mathcal{K} \cap \mathcal{M}} : \mathcal{K} \cap \mathcal{M} \;\to\; \cX'
\]
is weakly continuous; that is, whenever $u_n \to u$ in $\mathcal{K}\cap\mathcal{M}$ (equivalently in $\mathcal{M}$),
we have $Au_n \rightharpoonup Au$ in $\cX'$ (with respect to the weak topology $\sigma(\cX',\cX)$).
\end{definition}

\begin{remark}
A mapping $A: \mathcal{K} \to \cX'$ is  \emph{hemicontinuous} if for all $u, v \in \mathcal{K}$ and $\varphi \in \cX$, the function
\begin{equation*}
    f(\theta) = \langle A((1 - \theta)u +\theta v)  ,  \varphi \rangle
\end{equation*}
is continuous for $\theta \in [0,1]$.
For monotone mappings, hemicontinuity is equivalent to continuity on finite-dimensional subspaces in the sense of Definition~\ref{def:continuity}; see, e.g., \cite{MR163198}.
\end{remark}

We recall the Browder--Minty theorem.

\begin{lemma}[\!\! \cite{KiSt00}, Theorem 1.7] \label{lem:BoundedMintyExistence}
Let $\mathcal{K} \subset \cX$ be a nonempty, closed, bounded, and convex subset, and let \( A: \mathcal{K} \to \cX' \) 
be monotone and continuous on finite-dimensional subspaces. 
Then, there exists $u \in \mathcal{K}$ such that
\begin{equation*}
\langle Au, v - u \rangle \geq 0 \quad \text{for all } v \in \mathcal{K}.
\end{equation*}
\end{lemma}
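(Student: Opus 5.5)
The plan is the classical Galerkin--Minty argument: solve the inequality on finite-dimensional sections of $\mathcal{K}$, turn it into the \emph{Minty form}
\[
\langle Av, v-u\rangle \ge 0 \qquad \forall v\in\mathcal{K},
\]
pass to the limit by weak compactness, and finally undo the Minty form using continuity on finite-dimensional subspaces.

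\emph{Step 1 (finite-dimensional problem).} Fix a point $v_0\in\mathcal{K}$ and a finite-dimensional subspace $\mathcal{M}\subset\cX$ with $v_0\in\mathcal{M}$. Then $\mathcal{K}_{\mathcal{M}}:=\mathcal{K}\cap\mathcal{M}$ is nonempty, convex and compact, since it is closed and bounded in $\mathcal{M}$. Identify $\mathcal{M}$ with $\R^n$ through an inner product, and let $i_{\mathcal{M}}^*:\cX'\to\mathcal{M}'\cong\mathcal{M}$ be the restriction. The map $u\mapsto i_{\mathcal{M}}^*Au$ is continuous on $\mathcal{K}_{\mathcal{M}}$: by Definition~\ref{def:continuity}, $Au_n\rightharpoonup Au$ weakly-$*$, and testing against a basis of $\mathcal{M}$ gives convergence in $\mathcal{M}'$. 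Now define $T(u):=P_{\mathcal{K}_{\mathcal{M}}}\bigl(u-i_{\mathcal{M}}^*Au\bigr)$, where $P_{\mathcal{K}_{\mathcal{M}}}$ is the Euclidean projection. This $T$ is a continuous self-map of a compact convex set, so Brouwer's theorem gives a fixed point $u_{\mathcal{M}}$. The variational characterisation of the projection then yields
\[
\langle Au_{\mathcal{M}}, v-u_{\mathcal{M}}\rangle\ge 0 \qquad \forall v\in\mathcal{K}_{\mathcal{M}}.
\]
This is the Hartman--Stampacchia step, and I expect it to be the main technical point. One must check that weak continuity into $\cX'$ really gives norm continuity of the restricted map on $\mathcal{M}$, and that the set is nonempty, which is why $v_0$ is built into every $\mathcal{M}$.

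\emph{Step 2 (Minty reformulation and compactness).} By monotonicity,
\[
\langle Av, v-u_{\mathcal{M}}\rangle \ge \langle Au_{\mathcal{M}}, v-u_{\mathcal{M}}\rangle \ge 0 \qquad \text{for } v\in\mathcal{K}_{\mathcal{M}}.
\]
For $v\in\mathcal{K}$ set
\[
S(v):=\{u\in\mathcal{K}:\langle Av,v-u\rangle\ge 0\}.
\]
Each $S(v)$ is convex and strongly closed, hence weakly closed. Since $\cX$ is reflexive and $\mathcal{K}$ is bounded, closed and convex, $\mathcal{K}$ is weakly compact. Given finitely many $v_1,\dots,v_n\in\mathcal{K}$, take $\mathcal{M}=\mathrm{span}\{v_0,v_1,\dots,v_n\}$; Step 1 produces $u_{\mathcal{M}}\in\bigcap_i S(v_i)$. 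The family $\{S(v)\}$ therefore has the finite intersection property, and weak compactness gives a point $u\in\bigcap_{v\in\mathcal{K}}S(v)$. This $u$ satisfies the Minty form displayed above.

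\emph{Step 3 (back to the original inequality).} For $w\in\mathcal{K}$ and $\theta\in(0,1]$, put $v_\theta=u+\theta(w-u)\in\mathcal{K}$. Step 2 gives $\theta\langle Av_\theta, w-u\rangle\ge 0$, so $\langle Av_\theta,w-u\rangle\ge0$. The points $v_\theta$ lie in the finite-dimensional subspace $\mathrm{span}\{u,w\}$ and $v_\theta\to u$ as $\theta\to0^+$. Continuity on finite-dimensional subspaces therefore gives $Av_\theta\rightharpoonup Au$, and hence
\[
\langle Au,w-u\rangle\ge 0 \qquad \text{for all } w\in\mathcal{K},
\]
which is the claim.
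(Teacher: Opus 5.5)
Your argument is correct and is essentially the standard Kinderlehrer--Stampacchia proof that the paper uses by citation; the paper itself gives no proof. It runs through a finite-dimensional Hartman--Stampacchia step (Brouwer plus projection), the Minty reformulation via the sets $S(v)$, the finite intersection property on the weakly compact set $\mathcal{K}$, and Minty's lemma along segments to recover $\langle Au, w-u\rangle\ge 0$, with the points you flag (norm continuity of the restricted map on $\mathcal{M}$, nonemptiness of $\mathcal{K}\cap\mathcal{M}$, weak closedness of $S(v)$) all handled correctly.
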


To extend the Browder--Minty theorem to an unbounded domain $\mathcal{K}$, the theorem requires an additional coercivity assumption. We define this notion as follows.
\begin{definition} \label{def:coercivity}
Let $\mathcal{K}\subseteq \cX$ be nonempty, closed, and convex. A mapping $A: \mathcal{K} \to \cX'$ is \emph{coercive} if there is $\varphi \in \mathcal{K}$ for which
\[
\frac{\langle Au - A\varphi, u - \varphi \rangle}{\|u - \varphi \|_{\cX}} \to +\infty \quad \text{as } \|u\|_{\cX} \to +\infty,  u\in\mathcal{K}.
\]
\end{definition}
This leads to the following extension of the Browder--Minty theorem.
\begin{lemma}[\!\! \cite{KiSt00}, Corollary 1.8]\label{lem:UnboundedMintyExistence}
Let $\mathcal{K} \subset \cX$ be a nonempty, closed, and convex set. Suppose \( A: \mathcal{K} \to \cX' \) is coercive, monotone, and continuous on finite-dimensional subspaces. Then, there exists $u \in \mathcal{K}$ such that
\[
\langle Au, v - u \rangle \geq 0 \quad \text{for all } v \in \mathcal{K}.
\]
\end{lemma}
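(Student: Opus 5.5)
We deduce the statement from the bounded version, Lemma~\ref{lem:BoundedMintyExistence}, by truncating $\mathcal{K}$ to large balls and then using coercivity to show that the truncation constraint is not active. Fix the element $\varphi\in\mathcal{K}$ given by Definition~\ref{def:coercivity}. For $R>0$ set
\[
\mathcal{K}_R:=\{u\in\mathcal{K}:\ \|u-\varphi\|_{\cX}\le R\}.
\]
This set is nonempty (it contains $\varphi$), closed, bounded and convex. The restriction $A|_{\mathcal{K}_R}$ is still monotone and continuous on finite-dimensional subspaces. Lemma~\ref{lem:BoundedMintyExistence} therefore yields $u_R\in\mathcal{K}_R$ with
\[
\langle Au_R, v-u_R\rangle\ge 0 \quad \text{for all } v\in\mathcal{K}_R .
\]

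The key step is an a priori bound on $u_R$ that does not depend on $R$. By coercivity there is $R_0>0$ such that for every $u\in\mathcal{K}$ with $\|u\|_{\cX}\ge R_0$,
\[
\langle Au-A\varphi,u-\varphi\rangle>\|A\varphi\|_{\cX'}\,\|u-\varphi\|_{\cX}.
\]
Such a $u$ satisfies $u\neq\varphi$ once $R_0>\|\varphi\|_{\cX}$, so the quotient is defined. Now test the inequality for $u_R$ with $v=\varphi\in\mathcal{K}_R$. This gives $\langle Au_R,\varphi-u_R\rangle\ge0$, and hence
\[
\langle Au_R-A\varphi,u_R-\varphi\rangle\le\langle A\varphi,\varphi-u_R\rangle\le \|A\varphi\|_{\cX'}\|u_R-\varphi\|_{\cX}.
\]
This is incompatible with the coercivity inequality, so $\|u_R\|_{\cX}<R_0$ for every $R$.

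Now choose $R>R_0+\|\varphi\|_{\cX}$. Then $\|u_R-\varphi\|_{\cX}<R$ strictly, so $u_R$ lies in the interior of the ball relative to $\mathcal{K}$. To remove the ball constraint, take an arbitrary $v\in\mathcal{K}$ and put $w_t:=u_R+t(v-u_R)$ for $t\in(0,1]$. By convexity $w_t\in\mathcal{K}$. Since $\|w_t-\varphi\|_{\cX}\le\|u_R-\varphi\|_{\cX}+t\|v-u_R\|_{\cX}$, we have $w_t\in\mathcal{K}_R$ for all sufficiently small $t$. For such $t$ the inequality gives $t\langle Au_R,v-u_R\rangle\ge0$. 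Dividing by $t>0$ shows that $u:=u_R$ solves the variational inequality on all of $\mathcal{K}$.

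The only delicate point is the uniform bound on $u_R$. It uses just the single test function $\varphi$ together with the precise form of coercivity relative to that same $\varphi$, which is why Definition~\ref{def:coercivity} is anchored at a point of $\mathcal{K}$. The remaining steps are routine convexity arguments.
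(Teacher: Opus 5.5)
Your argument is correct. It is the standard truncation-and-coercivity argument by which Kinderlehrer--Stampacchia derive Corollary~1.8 from the bounded case (Theorem~1.7); the paper does not reprove this result and only cites it. The one cosmetic difference from the textbook version is that you intersect $\mathcal{K}$ with a ball centered at $\varphi$ rather than at the origin, which makes $\varphi$ an admissible test point in $\mathcal{K}_R$ for every $R>0$.
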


If $\mathcal K=\mathcal X$ is a linear space, then $\langle Au,v-u\rangle\ge0$ for all $v\in\mathcal X$ is equivalent to $Au=0$ in $\mathcal X'$.

\subsection{Auxiliary Bounds on the Hamiltonian} \label{sec:auxOnSepH}

We establish estimates for $H$ and $g$ required to prove the hemicontinuity of the operator.

\subsubsection{Basic auxiliary bounds}

The estimates in this subsection utilize the \emph{extended inverse} of \( g \), a mapping that is central to the analysis of MFG~System~\ref{mfg:2}.
\begin{definition} \label{def:extendedInverse}
Given a non-decreasing function \( g:[0,\infty) \to \R \), an extended inverse \( g^{-1}:\R\to [0,\infty) \) is a mapping satisfying
    \begin{enumerate}[label=\roman*.]
        \item \( g(g^{-1}(z))=z \) for all \( z \geq g(0) \), and
        \item \( g^{-1}(z) = 0 \) for all \( z < g(0) \).
    \end{enumerate}
\end{definition}
Under Assumption~\ref{assume:S.1}, any extended inverse (when it exists) is unique and coincides with the usual inverse of $g$ on its range $g([0,\infty))$.
The following result shows that, under our standing assumptions on $g$, one has $g([0,\infty))=[g(0),\infty)$, so an extended inverse exists.
\begin{proposition} \label{prop:wellDefinedInverse}
Suppose \( g:[0,\infty) \to \R \)
satisfies Assumptions~\ref{assume:D.1}, \ref{assume:S.0}~\ref{assume:S.0.a}, \ref{assume:S.1},
        and \ref{assume:S.3}. Then,
    the extended inverse $g^{-1}$ is well-defined and unique.
\end{proposition}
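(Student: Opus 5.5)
The plan is to reduce everything to showing that the range of $g$ is exactly $[g(0),\infty)$, i.e. $g([0,\infty))=[g(0),\infty)$. Once this is known, the extended inverse can be defined explicitly:
\[
g^{-1}(z):=\begin{cases} \text{the unique } m\ge 0 \text{ with } g(m)=z, & z\ge g(0),\\ 0, & z<g(0).\end{cases}
\]
Properties (i) and (ii) of Definition~\ref{def:extendedInverse} then hold by construction. Uniqueness is immediate: property (ii) fixes $g^{-1}$ on $(-\infty,g(0))$, and for $z\ge g(0)$, property (i) requires $g(g^{-1}(z))=z$. Since Assumption~\ref{assume:S.1} makes $g$ strictly increasing, hence injective, there is at most one preimage, which forces the value of $g^{-1}(z)$.

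For the range, I will argue as follows. By monotonicity, $g(m)\ge g(0)$ for all $m\ge0$, so the range is contained in $[g(0),\infty)$. For the reverse inclusion, fix $z\ge g(0)$. The lower bound in Assumption~\ref{assume:S.3}, $g(m)\ge C^{-1}m^\beta-C$, together with $\beta>0$ from Assumption~\ref{assume:D.1}, gives $g(m)\to\infty$ as $m\to\infty$. Hence there is $M>0$ with $g(M)\ge z$. Since $g$ is continuous on $[0,M]$ by Assumption~\ref{assume:S.0}~\ref{assume:S.0.a}, the intermediate value theorem produces $m\in[0,M]$ with $g(m)=z$. Strict monotonicity makes this $m$ unique, so the formula above is well defined.

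There is no real obstacle here. The only point needing care is the role of $\beta>0$ in guaranteeing unboundedness of $g$: without it, the range could be a bounded interval and $g^{-1}$ would fail to exist on part of $[g(0),\infty)$.
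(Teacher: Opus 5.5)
Your proposal is correct and follows essentially the same route as the paper: injectivity from strict monotonicity gives at most one preimage, and the lower bound in Assumption~\ref{assume:S.3} with $\beta>0$, combined with continuity, gives $g([0,\infty))=[g(0),\infty)$. You spell out the intermediate value step and the uniqueness of $g^{-1}$ slightly more explicitly than the paper does, but the argument is the same.
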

\begin{proof}
For $z < g(0)$, the definition prescribes $g^{-1}(z)=0$. For $z \ge g(0)$, strict monotonicity of $g$ (Assumption~\ref{assume:S.1}) implies injectivity, hence there is at most one $m\ge 0$ such that $g(m)=z$.
Moreover, because $\beta>0$ by Assumption \ref{assume:D.1}, Assumption~\ref{assume:S.3} implies \(g(m)\to+\infty\) as \(m\to+\infty\); combined with continuity of $g$ (Assumption~\ref{assume:S.0.a}), this yields \(g([0,\infty))=[g(0),\infty)\), so a pre-image exists for every \(z\ge g(0)\).
\end{proof}
\begin{remark}
In Proposition~\ref{prop:wellDefinedInverse}, only the condition \(\beta>0\) from Assumption~\ref{assume:D.1} is used (the exponent \(\alpha\) plays no role).
\end{remark}

We now establish basic bounds for \( g^{-1} \).
\begin{lemma}\label{lem:boundOnginverse1}
Suppose \( g:[0,\infty) \to \R \)
satisfies Assumptions~\ref{assume:D.1}, \ref{assume:S.0}~\ref{assume:S.0.a}, \ref{assume:S.1} and \ref{assume:S.3}. Let \( g^{-1} \) be the extended inverse of \( g \). Then, for all \( z \in \R \), we have 
    \[
       \frac{(z-g(0))_+^{1/\beta}}{C}-C  \leq g^{-1}(z) \leq C(z-g(0))_+^{1/\beta}+C,
    \]
    where \( C > 1 \) is a constant independent of \( z \).
\end{lemma}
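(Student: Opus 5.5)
The plan is to split into the two cases $z<g(0)$ and $z\ge g(0)$, and to invert the two-sided growth bound of Assumption~\ref{assume:S.3} in the second case.

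\emph{Case $z<g(0)$.} Here $g^{-1}(z)=0$ and $(z-g(0))_+=0$. The claimed inequalities reduce to $-C\le 0\le C$, which hold for any $C>1$.

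\emph{Case $z\ge g(0)$.} Set $m:=g^{-1}(z)\ge 0$, so that $g(m)=z$ by Definition~\ref{def:extendedInverse} and Proposition~\ref{prop:wellDefinedInverse}. Note that $z-g(0)=g(m)-g(0)\ge 0$.

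\emph{Upper bound.} By the lower half of Assumption~\ref{assume:S.3}, $C^{-1}m^\beta - C\le z$, so
\[
m^\beta \le C\big(z-g(0)\big) + C\big(C+g(0)\big).
\]
Since $|g(0)|\le C$ (apply \ref{assume:S.3} at $m=0$), the right-hand side is at most $C(z-g(0))+C'$. Taking $1/\beta$-th powers and using $(a+b)^{1/\beta}\le c_\beta(a^{1/\beta}+b^{1/\beta})$ for $a,b\ge0$ gives $m\le C(z-g(0))^{1/\beta}+C$. This elementary inequality holds with $c_\beta=1$ if $\beta\ge1$ and $c_\beta=2^{1/\beta-1}$ if $\beta<1$.

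\emph{Lower bound.} By the upper half of Assumption~\ref{assume:S.3}, $z=g(m)\le Cm^\beta+C$, so
\[
z-g(0)\le Cm^\beta + C - g(0)\le Cm^\beta + 2C.
\]
Hence $(z-g(0))^{1/\beta}\le c_\beta\big(C^{1/\beta}m + (2C)^{1/\beta}\big)$. Rearranging yields $m\ge C^{-1}(z-g(0))^{1/\beta}-C$ after enlarging $C$.

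Finally, take $C$ to be the maximum of the finitely many constants arising above. These depend only on $\beta$ and the constant in \ref{assume:S.3}, and in particular are independent of $z$.

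There is no real obstacle here. The only point needing care is the subadditivity of $t\mapsto t^{1/\beta}$: it fails when $\beta<1$, which is why the factor $c_\beta$ is used, and it is absorbed into $C$.
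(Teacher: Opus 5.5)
Your proposal is correct and follows the paper's proof. It uses the same case split at $z=g(0)$, sets $m=g^{-1}(z)$ so that $g(m)=z$, and inverts the two-sided growth bound of Assumption~\ref{assume:S.3} after absorbing $g(0)$ into the constant; you are more explicit than the paper about the last step, supplying the inequality $(a+b)^{1/\beta}\le c_\beta\,(a^{1/\beta}+b^{1/\beta})$ that the paper leaves implicit in ``isolating $m$.''
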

\begin{proof}
For \( z \leq g(0) \), we have \( g^{-1}(z) = 0 \) and \( (z-g(0))_+ = 0 \), so the inequalities hold. For \( z > g(0) \), let \( m = g^{-1}(z) \), which implies \( g(m) = z \). Assumption~\ref{assume:S.3} implies
\[
C^{-1} m^{\beta} - C \leq g(m) \leq C m^{\beta} + C.
\]
Since $g(0)$ is finite, we may absorb it into the generic constant. More precisely, from Assumption \ref{assume:S.3} and $z=g(m)$ we have
\[
C^{-1} m^{\beta} - C \le z \le C m^{\beta} + C,
\]
hence
\[
C^{-1} m^{\beta} - C' \le z - g(0) \le C m^{\beta} + C'
\]
for a constant $C'>0$ depending only on the constants in Assumption \ref{assume:S.3} and on $g(0)$.
Isolating $m$ and using that $z-g(0)=(z-g(0))_+$ for $z>g(0)$ yields the stated bounds (after renaming constants).
\end{proof}

\begin{corollary}\label{cor:boundOng^-1(H)1}
Suppose \( g:[0,\infty) \to \R \)
satisfies Assumptions~\ref{assume:D.1}, \ref{assume:S.0}, 
\ref{assume:S.1}, and \ref{assume:S.3}. Furthermore, assume that Assumptions~\ref{assume:S.4} and \ref{assume:S.5} hold.
Let \( g^{-1} \) be the extended inverse of \( g \).
Then, 
there exists \( C > 1 \) such that
for a.e.\ \(x\in\Omega\) and all \(p\in\R^d\), 
    \[
            \frac{|p|^{\alpha/\beta}}{C}-C  \leq g^{-1}(H(x,p)) \leq C|p|^{\alpha/\beta}+C.
    \]
\end{corollary}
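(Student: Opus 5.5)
We would prove the corollary by substituting $z=H(x,p)$ into Lemma~\ref{lem:boundOnginverse1} and then bounding $(H(x,p)-g(0))_+$ above and below by powers of $|p|$. Lemma~\ref{lem:boundOnginverse1} gives, for a.e.\ $x$ and all $p$,
\[
\frac{(H(x,p)-g(0))_+^{1/\beta}}{C}-C \le g^{-1}(H(x,p)) \le C(H(x,p)-g(0))_+^{1/\beta}+C .
\]
Measurability in $x$ is not an issue for the inequality, since it is checked pointwise.

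\textbf{Upper bound.} Remark~\ref{rmk:HamiltonImplicitBound}, which follows from Assumption~\ref{assume:S.4}, gives $H(x,p)\le C|p|^\alpha+C$. Since $g(0)$ is a finite constant, this yields $(H(x,p)-g(0))_+\le C|p|^\alpha+C$. The map $t\mapsto t_+$ is monotone, so this step is legitimate. We then use the elementary inequality $(a+b)^{1/\beta}\le C_\beta(a^{1/\beta}+b^{1/\beta})$ for $a,b\ge0$. It holds by concavity when $1/\beta\le1$, and by convexity with $C_\beta=2^{1/\beta-1}$ when $1/\beta>1$. This gives $(H-g(0))_+^{1/\beta}\le C|p|^{\alpha/\beta}+C$, and hence the upper bound after renaming constants.

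\textbf{Lower bound.} By Proposition~\ref{prop:equivAssumS.5} we may use form~\ref{assume:S.5.a}, namely $H(x,p)\ge C^{-1}|p|^\alpha-C$. Then $H(x,p)-g(0)\ge C^{-1}|p|^\alpha-C'$, and therefore $(H(x,p)-g(0))_+\ge (C^{-1}|p|^\alpha-C')_+$. We need the reverse elementary inequality $(a-b)_+^{1/\beta}\ge c_\beta a^{1/\beta}-b^{1/\beta}$ for $a,b\ge0$. Applying the upper inequality to $a=(a-b)_++ \min(a,b)$ gives $a^{1/\beta}\le C_\beta\bigl((a-b)_+^{1/\beta}+b^{1/\beta}\bigr)$. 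Rearranging, this is $(a-b)_+^{1/\beta}\ge C_\beta^{-1}a^{1/\beta}-b^{1/\beta}$. With $a=C^{-1}|p|^\alpha$ and $b=C'$, we get $(H-g(0))_+^{1/\beta}\ge c|p|^{\alpha/\beta}-C$. Inserting this into the left inequality of Lemma~\ref{lem:boundOnginverse1} and renaming constants gives $|p|^{\alpha/\beta}/C-C\le g^{-1}(H(x,p))$.

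The only step needing any care is the lower bound, specifically handling the positive part when the $1/\beta$ power may be either convex or concave. The elementary inequality above resolves it uniformly in both cases. Everything else is direct substitution of earlier results, with constants absorbed at each step.
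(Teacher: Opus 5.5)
Your proposal is correct and follows the paper's proof. The paper likewise substitutes $z=H(x,p)$ into Lemma~\ref{lem:boundOnginverse1}, using Remark~\ref{rmk:HamiltonImplicitBound} for the upper bound and Assumption~\ref{assume:S.5} for the lower bound; you simply make explicit the elementary power inequalities and the handling of the positive part that the paper leaves implicit. One minor point: citing Proposition~\ref{prop:equivAssumS.5} formally brings in the convexity Assumption~\ref{assume:S.2}, which is not among the corollary's listed hypotheses, but this is harmless because form (a) follows from (c), and from (b), without convexity, via the integral argument of the paper's (c $\Rightarrow$ a) step (for (b), the analogous splitting $H(x,p)=H(x,p/2)+\int_{1/2}^1 t^{-1}D_pH(x,tp)\cdot(tp)\,dt$ works), which uses only $H(x,\cdot)\in C^1$ and $H\ge -C$.
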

\begin{proof}
The result follows from Lemma~\ref{lem:boundOnginverse1} with \( z = H(x,p) \), Remark~\ref{rmk:HamiltonImplicitBound}, and Assumption~\ref{assume:S.5}. Note that while the lower bound may be negative for small values of $p$, we have $g^{-1} \ge 0$. 
\end{proof}

\subsubsection{Convex combination bounds}

In this subsection, we establish pointwise growth estimates for the Hamiltonian and flux terms along convex combinations. These bounds serve as the integrable majorants required to apply the dominated convergence theorem when proving the hemicontinuity of the MFG operator (see Section~\ref{sec:KeyPropSepCase}). We begin with the estimate for \( H \).
\begin{lemma}\label{lem:convProdSC1}
    Suppose that Assumptions~\ref{assume:D.1} and \ref{assume:S.3}--\ref{assume:S.5} hold. 
    Let \( p_1, p_0 \in \R^d \) and \( m_1,m_0 \in [0,\infty) \) be arbitrary, and define
    \[
    p_\th = (1-\th)p_0 + \th p_1 \quad  \text{ and } \quad  m_\th = (1-\th)m_0 + \th m_1.
    \]
Then, for a.e.\ $x\in \Omega$ and for every \( \th \in [0,1] \), we have
    \[
        |H(x, p_\th) - g(m_\th)| \leq C \left( \th^{\min(\b, 1)}  \left( |p_1|^{\alpha} + m_1^{\beta}\right) + (1-\th)^{\min(\b, 1)}  \left( |p_0|^{\alpha} + m_0^{\beta} \right) +1 \right)
    \]
    where \( C > 1 \) is a constant independent of \( p_0, p_1, m_0, m_1, \) and \( \th \).
\end{lemma}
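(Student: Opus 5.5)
The plan is to bound the two pieces separately via the triangle inequality,
\[
|H(x,p_\th)-g(m_\th)|\le |H(x,p_\th)|+|g(m_\th)|,
\]
and in each piece to reduce to a two-sided growth bound in $|p_\th|$ or $m_\th$. Convexity or subadditivity of the power functions then splits the bound into the $p_0,m_0$ and $p_1,m_1$ parts.

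\emph{The Hamiltonian term.} Remark~\ref{rmk:HamiltonImplicitBound} gives $H(x,p)\le C|p|^\alpha+C$. Assumption~\ref{assume:S.5}, in form \ref{assume:S.5.a} (equivalently the bound $H\ge -C$ in form \ref{assume:S.5.b}), gives $H(x,p)\ge -C$. Together these yield $|H(x,p_\th)|\le C|p_\th|^\alpha+C$ for a.e.\ $x$. Since $\alpha>1$, the map $p\mapsto|p|^\alpha$ is convex, so
\[
|p_\th|^\alpha\le \th|p_1|^\alpha+(1-\th)|p_0|^\alpha .
\]
For $\th\in[0,1]$ we have $\th\le\th^{\min(\beta,1)}$ and $1-\th\le(1-\th)^{\min(\beta,1)}$. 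Hence
\[
|H(x,p_\th)|\le C\big(\th^{\min(\beta,1)}|p_1|^\alpha+(1-\th)^{\min(\beta,1)}|p_0|^\alpha+1\big).
\]

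\emph{The coupling term.} Assumption~\ref{assume:S.3} gives $-C\le C^{-1}m^\beta-C\le g(m)\le Cm^\beta+C$, so $|g(m_\th)|\le Cm_\th^\beta+C$. To split $m_\th^\beta$ we distinguish two cases.
\begin{itemize}
\item If $\beta\ge1$, convexity of $t\mapsto t^\beta$ on $[0,\infty)$ gives $m_\th^\beta\le \th m_1^\beta+(1-\th)m_0^\beta$. This is bounded by $\th^{\min(\beta,1)}m_1^\beta+(1-\th)^{\min(\beta,1)}m_0^\beta$, since here $\min(\beta,1)=1$.
\item If $0<\beta<1$, we use the subadditivity $(a+b)^\beta\le a^\beta+b^\beta$ for $a,b\ge0$. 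This follows from concavity of $t\mapsto t^\beta$ together with its vanishing at $0$. It gives
\[
m_\th^\beta\le \th^\beta m_1^\beta+(1-\th)^\beta m_0^\beta,
\]
which is exactly the claimed form, with $\min(\beta,1)=\beta$.
\end{itemize}

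Adding the two estimates and renaming the constant gives the lemma, with $C$ depending only on the constants in Assumptions~\ref{assume:S.3}--\ref{assume:S.5}. The only point needing care is the case $\beta<1$. There the power $t^\beta$ is concave, so the convexity argument fails and must be replaced by subadditivity. This is what forces the exponent $\min(\beta,1)$ rather than $1$ on the weights.
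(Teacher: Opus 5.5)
Your proof is correct and follows essentially the same route as the paper. You bound $|H(x,p_\th)-g(m_\th)|$ by $C(|p_\th|^\alpha+m_\th^\beta+1)$ using Remark~\ref{rmk:HamiltonImplicitBound}, the lower bound $H\ge -C$ from \ref{assume:S.5}, and \ref{assume:S.3}; you then split $|p_\th|^\alpha$ by convexity, weakening $\th$ to $\th^{\min(\beta,1)}$, and split $m_\th^\beta$ by convexity or subadditivity according to the size of $\beta$. The paper compresses the triangle inequality and the two-sided bounds on $H$ and $g$ into one line, which you simply make explicit.
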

\begin{proof}
    From Assumptions~\ref{assume:S.3}--\ref{assume:S.5}, we have the upper bound
    \begin{equation}\label{eq:UBonSepH1-1}
    |H(x, p_\th) - g(m_\th)| \leq C (|p_\th|^{\a }+ m_\th^{\b }+1).
    \end{equation}
    The convexity of the mapping \( p \mapsto |p|^\alpha \) implies
    \begin{equation}\label{eq:UBonSepH1-3}
    |p_\th|^{\a } \leq (1-\th) |p_0|^{\a }+\th |p_1|^{\a }.
    \end{equation}
Since $\min(\b,1)\le 1$ and $0\le \th \le 1$, we have $\th \le \th^{\min(\b,1)}$ and $1-\th \le (1-\th)^{\min(\b,1)}$. Hence,
\[
(1-\th)|p_0|^{\a}+\th|p_1|^{\a}
\le (1-\th)^{\min(\b,1)}|p_0|^{\a}+\th^{\min(\b,1)}|p_1|^{\a}.
\]
    To bound \( m_\th^\b \), we distinguish two cases:
    \begin{enumerate}[label=\roman*.]
        \item If \( 0 < \beta \leq 1 \), the function $t \mapsto t^\beta$ is subadditive on $[0,\infty)$, which implies
        \[
        m_\th^\beta = ((1-\th)m_0 + \th m_1)^\beta \le (1-\th)^\beta m_0^\beta + \th^\beta m_1^\beta.
        \]
        \item If \( \b > 1 \), the convexity of the mapping \( m \mapsto m^{\beta} \) yields
        \[
        m_\th^\b \leq (1-\th) m_0^\beta+\th m_1^\b.
        \]
    \end{enumerate}
    In both cases, we have
    \begin{equation}\label{eq:UBonSepH1-2}
        m_\th^\b \leq (1-\th)^{\min(\b, 1)}\, m_0^\beta+\th^{\min(\b, 1)}\, {m_1^\b}.
    \end{equation}
    Combining \eqref{eq:UBonSepH1-1}, \eqref{eq:UBonSepH1-3}, and \eqref{eq:UBonSepH1-2} completes the proof.
\end{proof}

Our next lemma provides an upper bound on \( m D_pH(x,p) \), which governs the flow of agents in the system. This estimate is essential for ensuring the continuity of the monotone operator defined in Section~\ref{sec:MonoOpSepCase}.

\begin{lemma}\label{lem:convProdSC2}

Suppose that Assumptions~\ref{assume:D.1} and \ref{assume:S.4} hold. Let $p_1, p_0 \in \R^d$ and $m_1,m_0 \in [0,\infty)$ be arbitrary, and define $p_\th$ and $m_\th$ as in Lemma~\ref{lem:convProdSC1}.
Then, for a.e.\ \(x\in\Omega\) and every \( \th \in [0,1] \), we have
    \[
        |m_\th D_pH(x, p_\th)| \leq C \left( \th^{\min(\gamma-1, 1)} \left( |p_1|^{\gamma-1}+m_1^{(\beta+1)/\gamma'} \right)
        + (1-\th)^{\min(\gamma-1, 1)} \left( |p_0|^{\gamma-1}+m_0^{(\beta+1)/\gamma'} \right)  +1 \right),
    \]
    where \( C > 1 \) is a constant independent of \( p_0, p_1, m_0, m_1, \) and \( \th \).
\end{lemma}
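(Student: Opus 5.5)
The plan is to first prove a pointwise bound at a single point, $|m\,D_pH(x,p)|\le C\big(|p|^{\gamma-1}+m^{(\beta+1)/\gamma'}+1\big)$ for all $m\ge0$, $p\in\R^d$. Then I would apply it at $(m_\th,p_\th)$ and split each power along the convex combination, exactly as in the proof of Lemma~\ref{lem:convProdSC1}.

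\textbf{Step 1: the pointwise bound.} By Assumption~\ref{assume:S.4}(a), $|m\,D_pH(x,p)|\le C m|p|^{\alpha-1}+Cm$. I would apply Young's inequality to $m|p|^{\alpha-1}$ with exponent $q:=(\gamma-1)/(\alpha-1)$ on the $|p|$-factor, so that $(|p|^{\alpha-1})^q=|p|^{\gamma-1}$. Note that $q>1$ because $\gamma>\alpha$. The key check is that the conjugate exponent $q'$ equals $(\beta+1)/\gamma'$. From \eqref{eq:parameters2} one has $\gamma-1=(\alpha\beta+\alpha-\beta)/\beta$ and $q-1=\alpha/(\beta(\alpha-1))$. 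Hence
\[
q'=\frac{\alpha\beta+\alpha-\beta}{\alpha}=\frac{(\beta+1)(\gamma-1)}{\gamma}=\frac{\beta+1}{\gamma'}.
\]
Thus $m|p|^{\alpha-1}\le |p|^{\gamma-1}+m^{(\beta+1)/\gamma'}$. For the remaining linear term, observe that $s:=(\beta+1)/\gamma'=\beta+1-\beta/\alpha>1$, since $\alpha>1$. Therefore $m\le m^{s}+1$, and the pointwise bound follows.

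\textbf{Step 2: convex combinations.} Apply Step~1 with $(m_\th,p_\th)$. For the $m$-term, $s>1$ makes $t\mapsto t^s$ convex, so
\[
m_\th^s\le(1-\th)m_0^s+\th m_1^s\le (1-\th)^{\min(\gamma-1,1)}m_0^s+\th^{\min(\gamma-1,1)}m_1^s,
\]
where the last step uses $\th\le\th^{\min(\gamma-1,1)}$. For the $p$-term I would split into two cases on $\gamma-1$, as in Lemma~\ref{lem:convProdSC1}.
\begin{enumerate}[label=\roman*.]
\item If $\gamma-1\ge1$, convexity of $|p|^{\gamma-1}$ gives $|p_\th|^{\gamma-1}\le(1-\th)|p_0|^{\gamma-1}+\th|p_1|^{\gamma-1}$, which is then bounded by the same powers of $\th$ and $1-\th$ as above.
\item If $0<\gamma-1<1$, I would first use the triangle inequality, $|p_\th|\le(1-\th)|p_0|+\th|p_1|$. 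Subadditivity of $t\mapsto t^{\gamma-1}$ then gives $|p_\th|^{\gamma-1}\le(1-\th)^{\gamma-1}|p_0|^{\gamma-1}+\th^{\gamma-1}|p_1|^{\gamma-1}$.
\end{enumerate}
Combining Steps~1 and~2 yields the claimed estimate.

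\textbf{Main obstacle.} The only nontrivial point is the exponent bookkeeping in Step~1. The Young exponents must produce exactly $|p|^{\gamma-1}$ and $m^{(\beta+1)/\gamma'}$, and this identity relies on the specific choice \eqref{eq:parameters2} of $\gamma$. The rest is routine.
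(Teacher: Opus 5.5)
Your proposal is correct and follows essentially the same route as the paper. The paper also applies Young's inequality with the conjugate pair $\frac{\gamma-1}{\alpha-1}$ and $\frac{\beta+1}{\gamma'}$ and absorbs the linear term $m$, which reduces the bound to $C\bigl(m^{(\beta+1)/\gamma'}+|p|^{\gamma-1}+1\bigr)$; it then splits $m_\th^{(\beta+1)/\gamma'}$ by convexity and $|p_\th|^{\gamma-1}$ by the same convex/subadditive case distinction you use. Your explicit check that the two Young exponents are conjugate supplies the ``direct calculation'' the paper leaves to the reader.
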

\begin{proof}
    From Assumption~\ref{assume:S.4}, we have the upper bound
    \begin{equation} \label{eq:convProdSC2-1}
        |m_\th D_pH(x, p_\th)| \leq C\left( m_\th|p_\th|^{\a-1} + m_\th \right).
    \end{equation}
    To bound \( m_\th |p_\th|^{\a-1} \), we apply Young's inequality with exponents
    \[
        \lambda = \frac{\beta+1}{\gamma'} = \frac{\beta}{\a'}+1 > 1 \quad \text{ and } \quad \lambda'= \frac{\gamma-1}{\a-1}.
    \]
    A direct calculation using $\gamma = \frac{\beta+1}{\beta}\alpha$ verifies that $1/\lambda + 1/\lambda' = 1$. Applying Young's inequality yields
    \[
        m_\th |p_\th|^{\a-1} \leq \frac{m_\th^{\lambda}}{\lambda} + \frac{|p_\th|^{(\a-1)\lambda'}}{\lambda'} = \frac{m_\th^{(\beta+1)/\gamma'}}{\lambda} + \frac{|p_\th|^{\gamma-1}}{\lambda'}.
    \]
    Similarly, applying Young's inequality to the linear term gives $m_\th \leq \frac{m_\th^{\lambda}}{\lambda} + \frac{1}{\lambda'}$. Substituting these into \eqref{eq:convProdSC2-1}, we obtain
    \[
        |m_\th D_pH(x, p_\th)| \leq C\left( m_\th^{\lambda} +  |p_\th|^{\gamma-1}+1\right).
    \]
Finally, since $\lambda>1$ the map $t\mapsto t^\lambda$ is convex, hence
\[
m_\th^\lambda \le (1-\th)m_0^\lambda + \th m_1^\lambda \le (1-\th)^{\min(\gamma-1,1)} m_0^\lambda + \th^{\min(\gamma-1,1)} m_1^\lambda,
\]
because \(0\le \th \le 1\) implies \(\th \le \th^{\min(\gamma-1,1)}\) and \(1-\th \le (1-\th)^{\min(\gamma-1,1)}\).
To bound the remaining term $|p_\th|^{\gamma-1}$, we use $|p_\th|\le (1-\th)|p_0|+\th|p_1|$.
If $\gamma\ge 2$, then $t\mapsto t^{\gamma-1}$ is convex, hence
\[
|p_\th|^{\gamma-1}\le (1-\th)|p_0|^{\gamma-1}+\th|p_1|^{\gamma-1}.
\]
If $1<\gamma<2$, then $0<\gamma-1<1$ and $t\mapsto t^{\gamma-1}$ is subadditive, hence
\[
|p_\th|^{\gamma-1}\le (1-\th)^{\gamma-1}|p_0|^{\gamma-1}+\th^{\gamma-1}|p_1|^{\gamma-1}.
\]
In both cases,
\[
|p_\th|^{\gamma-1}\le (1-\th)^{\min(\gamma-1,1)}|p_0|^{\gamma-1}+\th^{\min(\gamma-1,1)}|p_1|^{\gamma-1}.
\]
    This yields the result.   
\end{proof}

The final result in this section is a composite estimate that captures the interplay between \( g^{-1} \), \( H\), and \( D_pH \) under convex combinations. This estimate is also key in establishing the continuity of the MFG operator.

\begin{lemma}\label{lem:boundOng^-1DpH1}
    Suppose that Assumptions~\ref{assume:D.1} and \ref{assume:S.3}--\ref{assume:S.5} hold. Let $p_1, p_0 \in \R^d$ be arbitrary. Set $p_\th = (1-\th)p_0 + \th p_1 $. 
    Then, for a.e.\ $x\in\Omega$ and every $\th \in [0,1]$, we have
\[
    g^{-1}(H(x,p_\th)) \, |D_pH(x, p_\th)| \leq C \left( \th^{\min(\gamma-1, 1)} |p_1|^{\gamma-1} +
    (1-\th)^{\min(\gamma-1, 1)}  |p_0|^{\gamma-1}  +1 \right)
\]
for some constant $C>1$ independent of $p_0$, $p_1$,  and $\th$.
\end{lemma}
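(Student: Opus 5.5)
The estimate will come from combining the two-sided bound on $g^{-1}\circ H$ from Corollary~\ref{cor:boundOng^-1(H)1} with the growth bound on $D_pH$ from Assumption~\ref{assume:S.4}. Once the product is reduced to a single power of $|p_\th|$, we finish exactly as in the last step of Lemma~\ref{lem:convProdSC2}. Throughout, $x$ ranges over the full-measure set on which all the pointwise bounds hold.

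\textbf{Step 1 (pointwise bound at $p_\th$).} By Corollary~\ref{cor:boundOng^-1(H)1}, $0\le g^{-1}(H(x,p_\th))\le C|p_\th|^{\alpha/\beta}+C$. By Assumption~\ref{assume:S.4}, $|D_pH(x,p_\th)|\le C|p_\th|^{\alpha-1}+C$. Multiplying these gives
\[
g^{-1}(H(x,p_\th))\,|D_pH(x,p_\th)|\le C\bigl(|p_\th|^{\alpha/\beta+\alpha-1}+|p_\th|^{\alpha/\beta}+|p_\th|^{\alpha-1}+1\bigr).
\]
The key exponent identity is
\[
\frac{\alpha}{\beta}+\alpha-1=\frac{\beta+1}{\beta}\alpha-1=\gamma-1,
\]
which follows from \eqref{eq:parameters2}. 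The two lower powers satisfy $0<\alpha/\beta<\gamma-1$ and $0\le \alpha-1<\gamma-1$, since $\alpha>1$. Hence each is bounded by $|p_\th|^{\gamma-1}+1$, via the elementary inequality $t^s\le t^r+1$ for $0\le s\le r$ and $t\ge0$. This yields
\[
g^{-1}(H(x,p_\th))\,|D_pH(x,p_\th)|\le C\bigl(|p_\th|^{\gamma-1}+1\bigr).
\]

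\textbf{Step 2 (splitting along the convex combination).} We use $|p_\th|\le(1-\th)|p_0|+\th|p_1|$ and distinguish two cases, as in the proof of Lemma~\ref{lem:convProdSC2}:
\begin{enumerate*}[label=(\roman*)]
\item if $\gamma\ge2$, convexity of $t\mapsto t^{\gamma-1}$ applies;
\item if $1<\gamma<2$, subadditivity of $t\mapsto t^{\gamma-1}$ applies.
\end{enumerate*}
In either case,
\[
|p_\th|^{\gamma-1}\le (1-\th)^{\min(\gamma-1,1)}|p_0|^{\gamma-1}+\th^{\min(\gamma-1,1)}|p_1|^{\gamma-1}.
\]
Substituting this into Step 1 gives the claimed bound.

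The only point requiring care is the exponent bookkeeping in Step 1. One must check that the product of the growths of $g^{-1}\circ H$ and $D_pH$ is exactly $|p|^{\gamma-1}$, which is precisely the role of the choice of $\gamma$ in Assumption~\ref{assume:D.1}. One must also absorb the lower-order cross terms. Only the upper bound of Corollary~\ref{cor:boundOng^-1(H)1} is needed, so Assumption~\ref{assume:S.5} enters only through that corollary's hypotheses.
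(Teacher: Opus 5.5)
Your proposal is correct and follows the paper's proof. You multiply the upper bound from Corollary~\ref{cor:boundOng^-1(H)1} by the growth bound in Assumption~\ref{assume:S.4}, use $\gamma-1=(\alpha-1)+\alpha/\beta$ to absorb the cross terms into $C(|p_\vartheta|^{\gamma-1}+1)$, and then split $|p_\vartheta|^{\gamma-1}$ by convexity when $\gamma\ge 2$ and by subadditivity when $1<\gamma<2$.
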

\begin{proof}
    From Assumption~\ref{assume:S.4} and Corollary~\ref{cor:boundOng^-1(H)1}, we have the bounds $|D_pH(x, p_\th)| \le C(|p_\th|^{\alpha-1} + 1)$ and $g^{-1}(H(x,p_\th)) \le C(|p_\th|^{\alpha/\beta} + 1)$. Multiplying these implies
    \[
    g^{-1}(H(x,p_\th)) |D_pH(x, p_\th)| \le C(|p_\th|^{\gamma-1} + 1),
    \]
since $\gamma-1=(\a-1)+\a/\b$, so $\a-1\le \gamma-1$ and $\a/\b\le \gamma-1$, and therefore
\[
(|p_\th|^{\a-1}+1)(|p_\th|^{\a/\b}+1)
\le |p_\th|^{\gamma-1}+|p_\th|^{\a-1}+|p_\th|^{\a/\b}+1
\le C(|p_\th|^{\gamma-1}+1).
\]
 We bound $|p_\th|^{\gamma-1}$ using the triangle inequality $|p_\th| \le (1-\th)|p_0| + \th|p_1|$.
    \begin{itemize}[leftmargin=*]
        \item If $\gamma \ge 2$, then $\gamma-1 \ge 1$ and $t \mapsto t^{\gamma-1}$ is convex. Thus,
        \[ |p_\th|^{\gamma-1} \le (1-\th)|p_0|^{\gamma-1} + \th|p_1|^{\gamma-1}. \]
        \item If $1 < \gamma < 2$, then $0 < \gamma-1 < 1$ and $t \mapsto t^{\gamma-1}$ is subadditive. Thus,
        \[ |p_\th|^{\gamma-1} \le ((1-\th)|p_0|)^{\gamma-1} + (\th|p_1|)^{\gamma-1} = (1-\th)^{\gamma-1}|p_0|^{\gamma-1} + \th^{\gamma-1}|p_1|^{\gamma-1}. \]
    \end{itemize}
    Combining these cases, we have $|p_\th|^{\gamma-1} \le (1-\th)^{\min(1, \gamma-1)}|p_0|^{\gamma-1} + \th^{\min(1, \gamma-1)}|p_1|^{\gamma-1}$, which yields the result.
\end{proof}

\section{The MFG Operator} \label{sec:MonoOpSepCase}

In this section, we construct the operator $ A_\epsilon $ associated with MFG System~\ref{mfg:2}. We first motivate its structure by formally deriving it from a penalized variational principle. Subsequently, we establish the functional framework and rigorously define the operator via a duality pairing on the product space $\cX$, which allows us to incorporate the mixed boundary conditions.

We recall that \( \Omega \subset \mathbb{R}^d \) is a bounded, open domain with a \( C^1 \) boundary \( \Gamma := \partial \Omega \). The boundary is partitioned as in MFG~System~\ref{mfg:2} into two disjoint subsets \( \Gamma_D \) and \( \Gamma_N \), with \( \Gamma = \overline{\Gamma_D \cup \Gamma_N} \) and \( \Gamma_D,\Gamma_N \neq \emptyset \). Throughout this section, we work under Assumptions~\ref{assumset:D} and~\ref{assumset:S}.

\subsection{Variational Derivation and Interpretation} \label{subsec:DerivationOfMonoOp}

In this subsection, we derive the penalized MFG operator \( A_\epsilon \) from a variational principle and link it to the strong formulation of System~\ref{mfg:2}. This approach synthesizes the variational formulation for mixed boundary conditions developed in \cite{Alharbi2026MFG} with the duality techniques found in \cite{Card1order}.

\paragraph{The Penalized Functional.}
Let \( g^{-1} \) be the extended inverse of \( g \) (Definition~\ref{def:extendedInverse}) and let \( G: \R \to \R \) be its primitive. We begin with the unpenalized baseline problem: minimizing \( \int_{\Omega} G(H(x,Du)) \dx - \int_{\Gamma_N} ju \ds \) subject to \( u \le 0 \) on \( \Gamma_D \).

To explicitly incorporate the density \( m \), we utilize the Fenchel conjugate \( G^*(m) = \sup_{z} \{mz - G(z)\} \). For \( m \ge 0 \), this satisfies
\[
    G^*(m) = m g(m) - G(g(m)),
\]
allowing us to write \( G(H) = \sup_{m \ge 0} \{ mH - G^*(m) \} \).
Similarly, to enforce the boundary inequality constraint \(u\le 0\) on \(\Gamma_D\), we introduce the penalty term \( \frac{1}{\gamma\epsilon^\gamma} \int_{\Gamma_D} u_+^{\gamma} \ds \).
 Using convex duality on \( L^\gamma(\Gamma_D) \), we express this penalty as a maximization over the boundary flux \( h \):
\[
    \frac{1}{\gamma\epsilon^\gamma} \int_{\Gamma_D} u_+^{\gamma} \ds = \sup_{\substack{h \in L^{\gamma'}(\Gamma_D) \\ h \ge 0}} \int_{\Gamma_D} \left( hu - \frac{\epsilon^{\gamma'}}{\gamma'} h^{\gamma'} \right) \ds.
\]
Substituting these relations into the baseline problem 
leads to the following minimax problem on the convex cone of nonnegative densities and fluxes (which we will rigorously define as $\cX^+$ in the next subsection):
\begin{equation} \label{eq:DerSepA3}
    \inf_{u}  \sup_{h\geq 0,m \geq 0} \enspace \tilde{\cI} (m,u,h),
\end{equation}
where
\begin{equation} \label{eq:DefOfcI}
    \begin{split}
        \tilde{\cI} (m,u,h) := \int_{\Omega} \big( m H(x,Du) - m g(m) &+ G(g(m))\big) \dx \\
        &- \int_{\Gamma_N} ju  \ds + \int_{\Gamma_D} \left(hu-\frac{\epsilon^{\gamma'}}{\gamma'} h^{\gamma'}\right) \ds.
    \end{split}
\end{equation}

\paragraph{First Variations and Physical Consistency.}
The functional \( \tilde{\cI} \) is convex in \( u \) and concave in \( (m,h) \) and the operator \( A_\epsilon \) corresponds to the skew-symmetric gradient-like map of this saddle-point problem:
\[
    (m,u,h) \mapsto \left(-\frac{\delta \tilde{\cI}}{\delta m}, \, \frac{\delta \tilde{\cI}}{\delta u}, \, -\frac{\delta \tilde{\cI}}{\delta h}\right).
\]
We note that while the G\^ateaux derivatives below are computed formally for smooth triplets, the resulting identities extend to \( \cX^+ \) via the growth bounds in Assumptions~\ref{assumset:S} and the dominated convergence theorem (see \cite{Alharbi2026MFG}). This saddle-point gradient map is the monotone operator studied in Subsection~\ref{subsec:MonotonicityProofSepA}.

\begin{enumerate}
    \item[\bfseries I.] {\bfseries Variation in \( u \):} For any \( v \in W^{1,\gamma}(\Omega) \), the stationarity condition \( \frac{\delta \tilde{\cI}}{\delta u} = 0 \) yields
    \begin{equation}\label{eq:Derof1stVaruSep1}
        \int_{\Omega}  m D_pH(x,Du)\cdot Dv \dx - \int_{\Gamma_N} jv  \ds + \int_{\Gamma_D} hv \ds = 0.
    \end{equation}
    Formally, this corresponds to the conservation law \( -\Div(m D_pH(x,Du)) = 0 \) with boundary fluxes \( mD_pH \cdot \nu = j \) on \( \Gamma_N \) and \( -mD_pH \cdot \nu = h \) on \( \Gamma_D \).

    \item[\bfseries II.] {\bfseries Variation in \( m \):} For any \( \mu \ge 0 \), the optimality condition \( \langle -\frac{\delta \tilde{\cI}}{\delta m}, \mu-m \rangle \ge 0 \) yields
    \begin{equation} \label{eq:Derof1stVarmSep1}
        \int_{\Omega} (\mu-m) (-H(x,Du)+g(m)) \dx \ge 0.
    \end{equation}
    This encodes the full complementarity system:
    \[
        m\ge 0,\qquad -H(x,Du)+g(m)\ge 0,\qquad m\bigl(-H(x,Du)+g(m)\bigr)=0
        \quad\text{a.e. in }\Omega.
    \]
    This is equivalent to the explicit inversion \( m = g^{-1}(H(x,Du)) \).

    \item[\bfseries III.] {\bfseries Variation in \( h \):} For any \( k \ge 0 \), the optimality condition \( \langle -\frac{\delta \tilde{\cI}}{\delta h}, k-h \rangle \ge 0 \) yields
    \begin{equation} \label{eq:Derof1stVarhSep1}
        \int_{\Gamma_D} (k-h)(-u+\epsilon^{\gamma'}h^{\gamma'-1})\ds \ge 0.
    \end{equation}
    This implies the penalized Signorini conditions:
    \[
        h \ge 0, \quad \epsilon^{\gamma'}h^{\gamma'-1} \ge u, \quad \text{and} \quad h(\epsilon^{\gamma'}h^{\gamma'-1}-u)=0.
    \]
    Solving for \( h \), we obtain \( h = \epsilon^{-\gamma} u_+^{\gamma-1} \), equivalently,
    \begin{equation}\label{eq:SubsecRemarks:hAndu_+}
        \epsilon^\gamma\,h = u_+^{\gamma-1}.
    \end{equation}
    Introducing \( h \) as an independent variable circumvents the difficulty inherent in the nonlinearity of the flux \( J := -mD_pH \) by providing a linear variable in the boundary integral. 
    Moreover, using the boundary flux relation \(h=-mD_pH(x,Du)\cdot \nu\) on \(\Gamma_D\) (formally for smooth triplets), \eqref{eq:SubsecRemarks:hAndu_+} yields the boundary identity:
    \begin{equation} \label{eq:SubsecRemarks:contact-set}
        -u\, mD_pH(x,Du)\cdot \nu =  \epsilon^{-\gamma} \, u_+^{\gamma} \qquad \text{ on } \Gamma_D.
    \end{equation}
\end{enumerate}

\paragraph{The Variational Inequality.}
Combining these conditions, any saddle point \( (m,u,h) \) of \eqref{eq:DerSepA3} satisfies the variational inequality:
\begin{equation}\label{eq:varIneq}
    \left\langle A_{\epsilon} \begin{bmatrix} m \\ u \\ h \end{bmatrix}, \begin{bmatrix} \mu-m \\ v-u \\ k-h\end{bmatrix} \right \rangle \geq 0 \qquad \text{for all } (\mu, v, k) \in \cX^+.
\end{equation}
Formally, \eqref{eq:varIneq} recovers System~\ref{mfg:2}: the $u$-variation yields $-\Div(mD_pH(x,Du))=0$ in $\Omega$ with fluxes $mD_pH\cdot \nu=j$ on $\Gamma_N$ and $-mD_pH\cdot \nu=h$ on $\Gamma_D$; the $m$-variation yields $m=g^{-1}(H(x,Du))$ a.e.\ in $\Omega$; and the $h$-variation yields the penalized Signorini condition on $\Gamma_D$, so that if $u<0$ on $\Gamma_D$ then $h=0$ and the outflow vanishes.

\begin{remark}[Regularization and Domain]
    The parameter \( \epsilon > 0 \) acts as a boundary regularization, introducing the term \( h^{\gamma'} \) which ensures coercivity in \( h \). Recovering the original hard constraint \( u \le 0 \) requires the limit \( \epsilon \to 0^+ \), using estimates derived from \eqref{eq:SubsecRemarks:contact-set}.
    Furthermore, while the penalized operator \( A_\epsilon \) requires \( h \in L^{\gamma'}(\Gamma_D) \) to define \( h^{\gamma'-1} \), the limit operator \( A_0 \) admits \( h \) in the strictly larger dual trace space \( W^{-1+\frac{1}{\gamma},\gamma'}(\Gamma_D) \), consistent with interpreting the limit boundary flux as a distribution.
\end{remark}

\subsection{Functional Framework} \label{subsec:FuncFramework}

We first establish the Banach spaces and duality pairings necessary for the weak formulation. Let the exponents \(\beta\) and \(\gamma\) be given as in Assumption~\ref{assume:D.1}, and let \(\gamma'\) denote the H\"older conjugate of \(\gamma\). We define the product space
\[
    \cX:= L^{\beta+1}(\Omega)\times W^{1,\gamma}(\Omega) \times L^{\gamma'}(\Gamma_D).
\]
The topological dual of \(\cX\) can be identified with
\[
    \cX' = L^{\frac{\beta+1}{\beta}}(\Omega) \times  W^{-1,\gamma'}(\Omega) \times L^{\gamma}(\Gamma_D),
\]
where \(W^{-1,\gamma'}(\Omega)\) denotes the dual space \((W^{1,\gamma}(\Omega))'\).

\paragraph{The Operator Domain.}
The domain of our operator is the convex cone of nonnegative densities and boundary fluxes:
\begin{equation} \label{eq:defOfcX^+}
    \cX^+ := \big\{ (m, u, h) \in \cX : m \geq 0 \text{ a.e. in } \Omega \text{ and } h \geq 0 \text{ a.e. on } \Gamma_D \big\}.
\end{equation}

\paragraph{Trace Spaces and Duality.}
While the \(L^p\) components of \(\cX'\) are standard, the handling of the divergence terms requires precise properties of the Sobolev dual. Let \( T: W^{1,\gamma}(\Omega) \to W^{1-\frac{1}{\gamma}, \gamma}(\Gamma) \) denote the continuous trace operator, and let \( W^{-1+\frac{1}{\gamma},\gamma'}(\Gamma) \) denote the dual of the trace space. We rely on the continuous embedding (see, e.g., \cite{DiBenedetto1}):
\begin{equation} \label{eq:dualityEmbedding1}
    L^{\gamma'}(\Omega) \times L^{\gamma'}(\Omega; \R^d) \times W^{-1+\frac{1}{\gamma},\gamma'}(\Gamma) \hookrightarrow W^{-1,\gamma'}(\Omega).
\end{equation}
This embedding identifies a triplet \( \mathbf{\Xi} = (f, F, \mathfrak{f}) \) in the product space with a functional acting on \( \varphi \in W^{1,\gamma}(\Omega) \) via the pairing:
\[
    \langle \mathbf{\Xi}, \varphi \rangle_{W^{-1,\gamma'}, W^{1,\gamma}}
    = \int_{\Omega} f\, \varphi \dx
    + \int_{\Omega} F \cdot D\varphi \dx
    + \big\langle \mathfrak f, T\varphi \big\rangle_{\Gamma}.
\]
Here, \(\langle \cdot, \cdot \rangle_{\Gamma}\) denotes the duality pairing between \(W^{-1+\frac{1}{\gamma},\gamma'}(\Gamma)\) and the trace space; when \(\mathfrak f \in L^{\gamma'}(\Gamma)\), this reduces to the standard boundary integral \(\int_{\Gamma} \mathfrak f \,T\varphi \ds\).

In the context of the operator \(A_\epsilon(m,u,h)\), the transport component corresponds to the triplet
\[
    \bigl(0,\; mD_pH(\cdot,Du),\; -j\,\Chi_{\Gamma_N}+h\,\Chi_{\Gamma_D}\bigr)
\]
under the embedding \eqref{eq:dualityEmbedding1}. For the penalized operator \( A_\epsilon \), the boundary terms lie in \( L^{\gamma'}(\Gamma) \). However, for the limit operator \( A_0 \), the boundary flux \( h \) belongs to the wider space \( W^{-1+\frac{1}{\gamma},\gamma'}(\Gamma_D) \), requiring the boundary terms to be interpreted strictly via the trace duality.

\subsection{Definition and Well-Posedness} \label{subsec:DefiningOperator}

We define the MFG operator \( A_{\epsilon}: \cX^+ \to \cX' \) associated with System~\ref{mfg:2}. While formally \( A_{\epsilon} \) corresponds to the vector-valued operator
\begin{equation}\label{eq:SepA_formal}
    A_{\epsilon} \begin{bmatrix} m \\ u \\ h \end{bmatrix} :=
    \begin{bmatrix}
        -H(x,Du) + g(m) \\
        -\Div \big(m\,D_pH(x,Du) \big) + \big(m\,D_pH(x,Du)\cdot \nu\,\Chi_{\Gamma} - j\,\Chi_{\Gamma_N} + h\,\Chi_{\Gamma_D}\big)\cH^{d-1}\\
        \big(-u + \epsilon^{\gamma'}h^{\gamma'-1}\big)\Chi_{\Gamma_D}\cH^{d-1}
    \end{bmatrix},
\end{equation}
we define it rigorously via a duality pairing to ensure boundedness in \( \cX' \). 
In particular, the transport component (the second line in \eqref{eq:SepA_formal}) is encoded through its action on \(v\in W^{1,\gamma}(\Omega)\) by
\[
v \longmapsto \int_\Omega mD_pH(x,Du)\cdot Dv\,\dx \;-\; \int_{\Gamma_N} j\,v\,\ds \;+\; \int_{\Gamma_D} h\,v\,\ds,
\]
which agrees with the formal expression in \eqref{eq:SepA_formal} whenever \(mD_pH(x,Du)\) admits a normal trace on \(\Gamma\). Hence no pointwise normal trace of \(mD_pH(x,Du)\) is required to define \(A_\epsilon\) at this stage.

\begin{definition}[The Operator $A_\epsilon$]
For any \( (m,u,h) \in \cX^+ \) and test triplet \( (\mu, v, k) \in \cX \), we define the action of \( A_{\epsilon} \) by:
\begin{equation}\label{eq:A_rigorous_pairing}
    \begin{split}
        \left\langle A_{\epsilon} \begin{bmatrix} m \\ u \\ h \end{bmatrix}  ,   \begin{bmatrix}  \mu \\ v \\ k \end{bmatrix}  \right\rangle
        &:= \int_\Omega \bigl[-H(x,Du)+g(m)\bigr] \mu \dx
        + \int_\Omega mD_pH(x,Du) \cdot Dv \dx \\
        &\quad - \int_{\Gamma_N} jv \ds + \int_{\Gamma_D} h v \ds
        + \int_{\Gamma_D} (-u+\epsilon^{\gamma'}h^{\gamma'-1}) k \ds.
    \end{split}
\end{equation}
In this expression, \( v \) and \( u \) are understood in the sense of traces on \( \Gamma \) and \( \Gamma_D \), respectively.
\end{definition}

We now verify that this pairing defines a bounded mapping.

\begin{proposition}\label{prop:WellDefinedness}
    Under Assumptions~\ref{assumset:S} and \ref{assumset:D}, the operator \( A_{\epsilon} : \cX^+ \to \cX' \), defined by \eqref{eq:A_rigorous_pairing}, is well-defined for any \( \epsilon > 0 \).
\end{proposition}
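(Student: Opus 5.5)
To show that $A_\epsilon(m,u,h)\in\cX'$, I would fix $(m,u,h)\in\cX^+$ and prove that the right-hand side of \eqref{eq:A_rigorous_pairing} is a linear functional in $(\mu,v,k)$ that is bounded by $C(m,u,h)\,\|(\mu,v,k)\|_{\cX}$. Linearity is immediate from the formula, so everything reduces to bounding each of the five terms by Hölder's inequality in the appropriate conjugate space. The exponent relation $\gamma=\frac{\beta+1}{\beta}\alpha$ from Assumption~\ref{assume:D.1} is what makes the exponents match.

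\textbf{The $\mu$-term.} By Remark~\ref{rmk:HamiltonImplicitBound} and Assumption~\ref{assume:S.5}, $|H(x,Du)|\le C|Du|^\alpha+C$, and by Assumption~\ref{assume:S.3}, $|g(m)|\le Cm^\beta+C$. Since $\Omega$ is bounded, it suffices that $|Du|^\alpha$ and $m^\beta$ lie in $L^{(\beta+1)/\beta}(\Omega)$.
\begin{itemize}
\item For $m^\beta$ this is immediate, because $\beta\cdot\frac{\beta+1}{\beta}=\beta+1$.
\item For $|Du|^\alpha$ we have $\alpha\frac{\beta+1}{\beta}=\gamma$ and $Du\in L^\gamma$.
\end{itemize}
Hence $-H(x,Du)+g(m)\in L^{(\beta+1)/\beta}(\Omega)$, the dual of $L^{\beta+1}$, and the term is bounded by $C\|\mu\|_{L^{\beta+1}}$.

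\textbf{The flux term.} This is the main point: we need $mD_pH(x,Du)\in L^{\gamma'}(\Omega;\R^d)$. The pointwise estimate in the proof of Lemma~\ref{lem:convProdSC2} (take $\th=1$, $p_1=Du$, $m_1=m$) gives
\[
|mD_pH(x,Du)|\le C\bigl(m^{(\beta+1)/\gamma'}+|Du|^{\gamma-1}+1\bigr).
\]
Raising to the power $\gamma'$ produces $m^{\beta+1}$, $|Du|^{\gamma}$ and a constant, all of which are integrable. Therefore $\bigl|\int_\Omega mD_pH\cdot Dv\bigr|\le \|mD_pH\|_{L^{\gamma'}}\|Dv\|_{L^\gamma}$. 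Measurability of $x\mapsto D_pH(x,Du(x))$ follows from Assumption~\ref{assume:S.0} (a Carathéodory-type composition).

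\textbf{Boundary terms.} By the trace theorem, $Tv\in W^{1-1/\gamma,\gamma}(\Gamma)\subset L^\gamma(\Gamma)$ with $\|Tv\|_{L^\gamma(\Gamma)}\le C\|v\|_{W^{1,\gamma}}$, and similarly for $Tu$.
\begin{itemize}
\item Since $j\in L^{\gamma'}(\Gamma_N)$ by Assumption~\ref{assume:D.2} and $h\in L^{\gamma'}(\Gamma_D)$, Hölder bounds $\int_{\Gamma_N}jv$ and $\int_{\Gamma_D}hv$ by $C\|v\|_{W^{1,\gamma}}$.
\item For the $k$-term, $Tu\in L^\gamma(\Gamma_D)$. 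Moreover $h^{\gamma'-1}\in L^\gamma(\Gamma_D)$ because $(\gamma'-1)\gamma=\gamma'$; this uses $h\ge0$, which is where membership in $\cX^+$ enters. So $-u+\epsilon^{\gamma'}h^{\gamma'-1}\in L^\gamma(\Gamma_D)=(L^{\gamma'}(\Gamma_D))'$, and the term is bounded by $C\|k\|_{L^{\gamma'}}$.
\end{itemize}

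Summing the three estimates shows that $A_\epsilon(m,u,h)$ is a bounded linear functional on $\cX$, i.e.\ an element of $\cX'$. I expect the only nontrivial step to be the flux integrability, and the Young-inequality exponent bookkeeping for it is already contained in Lemma~\ref{lem:convProdSC2}.
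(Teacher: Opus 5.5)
Your proof is correct and follows the paper's argument: the same componentwise H\"older/trace bounds, with the key flux integrability $mD_pH(x,Du)\in L^{\gamma'}(\Omega;\R^d)$ resting on the same Young-inequality exponents $\lambda=(\beta+1)/\gamma'$ and $\lambda'=(\gamma-1)/(\alpha-1)$, which you import from Lemma~\ref{lem:convProdSC2} at $\th=1$ instead of redoing the computation. Your appeal to Assumption~\ref{assume:S.5} for the lower bound on $H$ is slightly more careful than the paper. The paper attributes the two-sided bound $|H(x,p)|\le C(|p|^\alpha+1)$ to Remark~\ref{rmk:HamiltonImplicitBound} alone, but that remark only supplies the upper bound.
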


\begin{proof}
    To establish that \( A_{\epsilon}(\cX^+ ) \subseteq \cX' \), it suffices to demonstrate that each component of the pairing defines a functional in the appropriate space. Specifically, we show:
    \[
    \begin{array}{rlcrl}
        -H(\cdot,Du) + g(m) &\in L^{\frac{\beta+1}{\beta}}(\Omega), &  &
        m D_p H(\cdot,Du) &\in L^{\gamma'}(\Omega; \mathbb{R}^d), \\[0.5em]
        -u|_{\Gamma_D} + \epsilon^{\gamma'} h^{\gamma'-1} &\in L^{\gamma}(\Gamma_D), & \text{ and } &
        - j \, \Chi_{\Gamma_N} + h \, \Chi_{\Gamma_D} &\in W^{-1+\frac{1}{\gamma}, \gamma'}(\Gamma).
    \end{array}
    \]
    We establish these bounds as follows:
    \begin{enumerate}
        \item \textbf{Hamiltonian and Coupling:} Assumption~\ref{assume:S.4} (specifically Remark~\ref{rmk:HamiltonImplicitBound}) implies the bound \( |H(x,p)| \le C(|p|^\alpha + 1) \). Since $\alpha\frac{\beta+1}{\beta} = \gamma$, we have \( H(\cdot,Du) \in L^{\frac{\beta+1}{\beta}}(\Omega) \). Similarly, Assumption~\ref{assume:S.3} implies \( g(m) \in L^{\frac{\beta+1}{\beta}}(\Omega) \). Thus, the sum lies in \( L^{\frac{\beta+1}{\beta}}(\Omega) \).
\item \textbf{Transport Flux:} Assumption~\ref{assume:S.4} yields \( |D_pH(x,p)| \le C(|p|^{\alpha-1} +1) \); hence,
\(
|D_pH(x,Du)|^{\gamma'} \le C(|Du|^{(\alpha-1)\gamma'}+1).
\)
Applying Young's inequality to the mixed term with conjugate exponents
\[
\lambda=\frac{\beta+1}{\gamma'},\qquad \lambda'=\frac{\gamma-1}{\alpha-1}
\quad\left(\text{so that } \frac{1}{\lambda}+\frac{1}{\lambda'}=1\right),
\]
and using \(m^{\gamma'}\le 1+m^{\beta+1}\) (since \(\gamma'\le \beta+1\) when \(\alpha>1\)), yields
\[
    m^{\gamma'}|D_pH(x,Du)| ^{\gamma'} \leq C m^{\beta+1} + C(|Du|^{\gamma} + 1).
\]
Thus, \( mD_pH(x,Du) \in L^{\gamma'}(\Omega; \mathbb{R}^d) \).

        \item \textbf{Boundary Terms:} Since \( j \in L^{\gamma'}(\Gamma_N) \) and \( h \in L^{\gamma'}(\Gamma_D) \), the boundary flux \( - j\Chi_{\Gamma_N}+ h\Chi_{\Gamma_D} \) lies in \( L^{\gamma'}(\Gamma) \subset W^{-1+\frac{1}{\gamma},\gamma'}(\Gamma) \). Finally, the trace theorem ensures \( u|_{\Gamma_D} \in L^{\gamma}(\Gamma_D) \), and \( h^{\gamma'-1} \in L^{\gamma}(\Gamma_D) \), so the penalty term is well-defined.
    \end{enumerate}
    This confirms that the pairing defines a map from \(\cX^+\) to \(\cX'\).
\end{proof}

\section{On the Properties of the MFG Operator} \label{sec:KeyPropSepCase}

In this section, we investigate the fundamental properties of the penalized MFG operator 
\( A_\epsilon \). To simplify the presentation, throughout this and the following section, we fix $\epsilon=1$ and write
\begin{equation}\label{eq:TheFullSepOpA}
    A := A_1,
\end{equation}
noting that all statements and proofs extend naturally to arbitrary $\epsilon>0$. We begin by establishing the monotonicity and hemicontinuity of the full operator $A$. Then, we introduce a restricted auxiliary operator $A_u$ to prove that the system exhibits partial coercivity with respect to $m$ and $h$. Finally, we demonstrate that $A$ lacks global coercivity due to a constant-shift degeneracy in the transport variable $u$. This failure prevents the direct application of the Browder--Minty theorem on $\cX^+$, motivating the quotient-space formulation introduced in Section~\ref{sec:SepExiste}.

\subsection{ \texorpdfstring{The monotonicity of the operator \( A \)}{The monotonicity of the operator A}} \label{subsec:MonotonicityProofSepA}

In this subsection, we establish the defining property of the monotone operator \( A \), namely, its monotonicity.

\begin{proposition} \label{prop:SepAMono}
Suppose Assumptions~\ref{assumset:S} and \ref{assumset:D} hold. Then, the operator  \( A:\cX^+ \to \cX' \), defined by \eqref{eq:TheFullSepOpA}, is monotone.
\end{proposition}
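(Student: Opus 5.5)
We compute $\langle A(m_1,u_1,h_1)-A(m_0,u_0,h_0),\,(m_1-m_0,u_1-u_0,h_1-h_0)\rangle$ directly from the pairing \eqref{eq:A_rigorous_pairing} (with $\epsilon=1$) and regroup it into three pieces: an $m$-coupling piece, a Hamiltonian piece, and a boundary piece.

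\textbf{The boundary piece.} Testing the second component against $v=u_1-u_0$ produces the terms $\int_{\Gamma_D}(h_1-h_0)(u_1-u_0)\ds$. Testing the third component against $k=h_1-h_0$ produces $-\int_{\Gamma_D}(u_1-u_0)(h_1-h_0)\ds$. These cancel exactly; this is the skew-symmetric structure. The terms involving $j$ also cancel, since they do not depend on the state. What remains on the boundary is
\[
\int_{\Gamma_D}\bigl(h_1^{\gamma'-1}-h_0^{\gamma'-1}\bigr)(h_1-h_0)\ds\ \ge 0,
\]
which is nonnegative because $t\mapsto t^{\gamma'-1}$ is nondecreasing on $[0,\infty)$ and $h_i\ge 0$.

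\textbf{The coupling piece.} This is $\int_\Omega (g(m_1)-g(m_0))(m_1-m_0)\dx\ge0$, which holds by \ref{assume:S.1}.

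\textbf{The Hamiltonian piece.} Writing $p_i=Du_i$, the remaining integrand is
\[
-(H(x,p_1)-H(x,p_0))(m_1-m_0)+(m_1D_pH(x,p_1)-m_0D_pH(x,p_0))\cdot(p_1-p_0).
\]
Regrouping it by the coefficients $m_1$ and $m_0$ gives
\[
m_1\bigl[H(x,p_0)-H(x,p_1)-D_pH(x,p_1)\cdot(p_0-p_1)\bigr]+m_0\bigl[H(x,p_1)-H(x,p_0)-D_pH(x,p_0)\cdot(p_1-p_0)\bigr].
\]
Each bracket is nonnegative by the convexity \ref{assume:S.2} (first-order characterization, valid since $H(x,\cdot)\in C^1$), and $m_0,m_1\ge0$ on $\cX^+$. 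Hence this piece is nonnegative pointwise a.e.

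\textbf{Justification of integrability.} The only point needing care is that each split integral is finite, so that the regrouping is legitimate. By Proposition~\ref{prop:WellDefinedness}, $H(\cdot,Du_i)$ and $g(m_i)$ lie in $L^{(\beta+1)/\beta}$, and $m_iD_pH(\cdot,Du_i)\in L^{\gamma'}$. I expect the main (mild) obstacle to be the cross terms $m_0D_pH(x,p_0)\cdot p_1$ and $m_1H(x,p_0)$. These are handled by Hölder's inequality with $m_iD_pH\in L^{\gamma'}$ and $Du_j\in L^\gamma$, together with the pairing $L^{\beta+1}\times L^{(\beta+1)/\beta}$. The traces $u_i|_{\Gamma_D}\in L^\gamma$ and $h_i\in L^{\gamma'}$ make the boundary terms finite. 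Summing the three nonnegative pieces yields monotonicity.
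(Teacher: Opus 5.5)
Your proof is correct and follows the paper's argument essentially line for line. You use the same four-term splitting: the $g$-monotonicity term, the $h^{\gamma'-1}$ term that remains once the $\int_{\Gamma_D} h v$ and $-\int_{\Gamma_D} u k$ cross terms cancel, and the two convexity brackets weighted by the nonnegative densities. Your explicit integrability check via the $L^{\beta+1}\times L^{(\beta+1)/\beta}$ and $L^{\gamma'}\times L^{\gamma}$ pairings is sound, and it fills in a detail the paper leaves implicit.
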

\begin{proof}
    Let \( (m,u,h), (\mu,v,k) \in \cX^+ \) be arbitrary. To establish the monotonicity of \( A \), we need to show that
    \[
    \varpi := \left\langle A \begin{bmatrix} m \\ u \\ h \end{bmatrix}  - A \begin{bmatrix} \mu \\ v \\ k\end{bmatrix}  \, ,\,  \begin{bmatrix} m \\ u \\ h \end{bmatrix}  - \begin{bmatrix} \mu \\ v \\ k\end{bmatrix}  \right\rangle \geq 0.
    \]
By rearranging the terms of the pairing, we obtain
    \[
    \begin{aligned}
        \varpi &= \int_\Omega [g(m)-g(\mu)] (m-\mu) \dx + \int_{\Gamma_D} (h^{\gamma'-1}-k^{\gamma'-1})(h-k) \ds \\
        &\quad + \int_\Omega m \left[H(x,Dv)-H(x,Du)-D_pH(x,Du) \cdot (Dv-Du) \right] \dx \\
        &\quad + \int_\Omega \mu \left[H(x,Du)-H(x,Dv)-D_pH(x,Dv) \cdot (Du-Dv) \right] \dx .
    \end{aligned}
    \]
    The first two integrals are nonnegative because the maps \(\mu \mapsto g(\mu) \) and \( h \mapsto h^{\gamma'-1} \) are increasing. The last two integrals are nonnegative due to the convexity of the map \( p \mapsto H(x, p) \) and the non-negativity of \( m \) and \( \mu \). Therefore, \( A \) is monotone.
\end{proof}

\begin{remark}[On strict monotonicity]
	Note that the operator $A$ is monotone, but not strictly monotone. The terms in the monotonicity calculation involving $H$ are guaranteed to be non-negative by convexity, but they can be zero even if $Du \neq Dv$ (for instance, if $H$ is linear in a region). 
    Even if $H$ is strictly convex, strict monotonicity in $u$ holds only \emph{modulo constants} (and can still fail in the vanishing-density regime).
    This lack of strict monotonicity in the $u$ component is a key feature of the problem. In contrast, the auxiliary operator $A_u$ in Proposition \ref{prop:SepAuxA_u1Mono} is strictly monotone in $(m,h)$ due to the strict increase of $g$ and the term $h^{\gamma'-1}$.
\end{remark}

\subsection{\texorpdfstring{The continuity of the operator \( A \)}{The continuity of the operator A}} \label{subsec:contOfSepA}

The hemicontinuity of the monotone operator \( A \) is a necessary condition for the application of the Browder--Minty theorem. 
In view of the discussion in Subsection~\ref{sec:MonoMinty}, it is enough to verify continuity along convex combinations.

\begin{proposition}\label{prop:SepA_continuity}
    Suppose Assumptions \ref{assumset:S} and \ref{assumset:D} hold, and let \( A:\cX^+ \to \cX' \) be the operator defined by \eqref{eq:TheFullSepOpA}. Let \( (m_0,u_0,h_0), (m_1,u_1,h_1) \in \cX^+ \) be fixed, and define the convex combination
    \[
        (m_\th,u_\th,h_\th):= \th (m_1,u_1,h_1) + (1-\th)(m_0,u_0,h_0)
    \]
    for \( \th \in [0,1] \). Then, for every \( (\mu, v, k) \in \cX \), the mapping
    \[
        \th \mapsto \left\langle A \begin{bmatrix} m_\th\\ u_\th \\ h_\th\end{bmatrix}  \, ,\,  \begin{bmatrix} \mu \\ v \\ k\end{bmatrix}  \right\rangle
    \]
    is continuous on \( [0,1] \).
\end{proposition}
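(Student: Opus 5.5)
We write the pairing $\langle A(m_\th,u_\th,h_\th),(\mu,v,k)\rangle$ as a sum of five terms and prove continuity in $\th$ of each separately:
\[
I_1(\th)=\int_\Omega\bigl[-H(x,Du_\th)+g(m_\th)\bigr]\mu\dx,\qquad I_2(\th)=\int_\Omega m_\th D_pH(x,Du_\th)\cdot Dv\dx,
\]
\[
I_3=-\int_{\Gamma_N} jv\ds,\qquad I_4(\th)=\int_{\Gamma_D}h_\th v\ds,\qquad I_5(\th)=\int_{\Gamma_D}\bigl(-u_\th+h_\th^{\gamma'-1}\bigr)k\ds .
\]
The term $I_3$ is constant. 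The terms $I_4$ and $-\int_{\Gamma_D}u_\th k\ds$ are affine in $\th$, since the trace $Tu_\th=\th Tu_1+(1-\th)Tu_0$ is linear, so they are continuous. The remaining terms are handled by the dominated convergence theorem: fix $\th_n\to\th$, observe the pointwise a.e.\ convergence of the integrands, and exhibit an integrable majorant independent of $n$.

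\textbf{Pointwise convergence.} We have $Du_{\th_n}(x)\to Du_\th(x)$ and $m_{\th_n}(x)\to m_\th(x)$ for a.e.\ $x$, since these are affine in $\th$. Continuity of $g$ (Assumption~\ref{assume:S.0}) and of $H(x,\cdot)$, $D_pH(x,\cdot)$ for a.e.\ $x$ (Assumption~\ref{assume:S.0}\ref{assume:S.0.b}, $H(x,\cdot)\in C^1$) give a.e.\ convergence of the integrands of $I_1$ and $I_2$. On $\Gamma_D$, $h_{\th_n}^{\gamma'-1}\to h_\th^{\gamma'-1}$ pointwise because $h_\th\ge0$ and $t\mapsto t^{\gamma'-1}$ is continuous on $[0,\infty)$.

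\textbf{Majorants.} For $I_1$, Lemma~\ref{lem:convProdSC1} bounds the integrand, uniformly in $\th$, by
\[
C\bigl(|Du_1|^\alpha+|Du_0|^\alpha+m_1^\beta+m_0^\beta+1\bigr)|\mu|.
\]
This function is integrable: the bracket lies in $L^{(\beta+1)/\beta}(\Omega)$ because $\alpha(\beta+1)/\beta=\gamma$ and $m_i\in L^{\beta+1}$, while $\mu\in L^{\beta+1}$. For $I_2$, Lemma~\ref{lem:convProdSC2} gives the bound
\[
C\bigl(|Du_1|^{\gamma-1}+|Du_0|^{\gamma-1}+m_1^{(\beta+1)/\gamma'}+m_0^{(\beta+1)/\gamma'}+1\bigr)|Dv|.
\]
Each term in the bracket lies in $L^{\gamma'}(\Omega)$, since $(\gamma-1)\gamma'=\gamma$ and $m^{(\beta+1)/\gamma'}\in L^{\gamma'}$. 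As $Dv\in L^\gamma$, Hölder's inequality makes the majorant integrable. For $I_5$, convexity or subadditivity of $t^{\gamma'-1}$ (splitting the cases $\gamma'\ge2$ and $\gamma'<2$ as in the lemmas) gives
\[
h_\th^{\gamma'-1}\le C\bigl(h_0^{\gamma'-1}+h_1^{\gamma'-1}\bigr)\in L^\gamma(\Gamma_D),
\]
and this is integrable against $k\in L^{\gamma'}(\Gamma_D)$. Dominated convergence then yields the continuity of $I_1$, $I_2$ and $I_5$.

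The main obstacle is the flux term $I_2$. There one must check that the mixed product $m_\th|Du_\th|^{\alpha-1}$ is controlled in $L^{\gamma'}$ by a $\th$-independent function. This is exactly what the Young's inequality split with exponents $\lambda=(\beta+1)/\gamma'$, $\lambda'=(\gamma-1)/(\alpha-1)$ in Lemma~\ref{lem:convProdSC2} provides, and it relies on the exponent relation $\gamma=\alpha(\beta+1)/\beta$. The remaining work is bookkeeping of exponents.
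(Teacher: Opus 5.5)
Your proposal is correct and follows essentially the same route as the paper: the same splitting of the pairing, the same majorants from Lemma~\ref{lem:convProdSC1} and Lemma~\ref{lem:convProdSC2}, linearity for the trace terms, and dominated convergence. The only differences are cosmetic. The paper turns each product (the bracket times $|\mu|$, $|Dv|$, or $|k|$) into an $L^1$ majorant via Young's inequality, whereas you apply H\"older's inequality to the same products, and you also state explicitly the continuity of $g$ needed for pointwise convergence, which the paper leaves implicit.
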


\begin{proof}
    Fix \( (m_0,u_0,h_0), (m_1,u_1,h_1) \in \cX^+ \) and define the convex combination
    \[
        (m_\th,u_\th,h_\th) := \th (m_1,u_1,h_1) + (1-\th)(m_0,u_0,h_0),
    \]
    for \( \th \in [0,1] \). For an arbitrary \( (\mu, v, k) \in \cX \), we consider the function
    \[
    \begin{split}
    f(\th) := \left\langle A \begin{bmatrix} m_\th\\ u_\th \\ h_\th\end{bmatrix}  \, ,\,  \begin{bmatrix} \mu \\ v \\ k\end{bmatrix}  \right\rangle.
    \end{split}
    \]
Expanding this expression using the definition of \( A \), we obtain
    \[
    \begin{aligned}
        f(\th) &= \int_\Omega [-H(x,Du_\th) + g(m_\th)] \mu \dx + \int_\Omega m_\th D_pH(x,Du_\th)\cdot Dv \dx \\
        &\quad - \int_{\Gamma_N} j v \ds + \int_{\Gamma_D} [h_\th v - u_\th k] \ds + \int_{\Gamma_D} h_\th^{\gamma'-1} k \ds.
    \end{aligned}
    \]
    Our goal is to prove that \( f \) is continuous using the dominated convergence theorem.

From Lemma~\ref{lem:convProdSC1} and Young's inequality, with
$p_\th := Du_\th$ and
$p_i := Du_i$ for $i = 0,1$, we obtain   
    \[
        \begin{split}
             |-H(x, p_\th) + g(m_\th)||\mu| &\leq  C\left[|p_1|^{\alpha} + m_1^{\beta}
            +   |p_0|^{\alpha} + m_0^{\beta} +1\right]|\mu| \\
            & \leq  C\left[|p_1|^{\gamma} + m_1^{\beta+1}
            +   |p_0|^{\gamma} + m_0^{\beta+1} + |\mu|^{\beta+1} +1\right], 
        \end{split}
    \]
where we used the exponent relations in \eqref{eq:parameters1} and Young's inequality to bound the products $|p|^\alpha |\mu|$ (with the conjugate exponents $\frac{\gamma}{\alpha}$ and $\beta+1$) and $m^\beta |\mu|$ (with the exponents $\frac{\beta+1}{\beta}$ and $\beta+1$).

    Moreover, from Lemma~\ref{lem:convProdSC2} and Young's inequality, we obtain
    \[
    \begin{split}
       | m_\th D_pH(x,Du_\th)\cdot Dv| &\leq  |m_\th D_pH(x, p_\th)| |Dv| \\
       &\leq C \left[ |p_0|^{\gamma-1}+m_0^{(\beta+1)/\gamma'}
            +  |p_1|^{\gamma-1}+m_1^{(\beta+1)/\gamma'}    +1 \right] |Dv|\\
            &\leq C \left[ |p_0|^{\gamma}+m_0^{\beta+1}
            +  |p_1|^{\gamma}+m_1^{\beta+1} + |Dv|^{\gamma}+1 \right].
    \end{split}
    \]
    Similarly, Young's inequality and the fact that \( \th \leq 1 \) give us
    \[
        h_\th^{\gamma'-1}|k| \leq C(h_1^{\gamma'}+h_0^{\gamma'}+|k|^{\gamma'}).
    \]
Hence, all integrands are dominated by an $L^1(\Omega)$ or $L^1(\Gamma_D)$ function independent of $\theta$,
and pointwise convergence follows from continuity of $H$ and $D_pH$ in $p$.    
Finally, the term $\int_{\Gamma_D} [h_\th v - u_\th k] \ds$ is continuous in $\th$ by linearity, and the term $-\int_{\Gamma_N} jv \ds$ is constant. Thus, 
by the pointwise convergence of the remaining integrands as $\th \to \th_0$ and the established dominating functions, the dominated convergence theorem yields
\[
\lim_{\th \to \th_0} f(\th) = f(\th_0).
\]
This completes the proof.
\end{proof}

\subsection{The Restricted Auxiliary Operator \texorpdfstring{\( A_u \)}{Au} and Partial Coercivity} \label{subsec:TheSepOpCoercivityInmandh}

This subsection establishes the coercivity of the operator \( A \) in \( m \) and \( h \) (see Definition~\ref{def:coercivity}).
As usual, we identify $L^{(\beta+1)/\beta}(\Omega) \times L^\gamma(\Gamma_D)$ with the dual of $L^{\beta+1}(\Omega) \times L^{\gamma'}(\Gamma_D)$. 
For each \( u \in W^{1,\gamma}(\Omega) \), we define  the restricted auxiliary operator
\[
A_u: L^{\beta+1}(\Omega) \times  L^{\gamma'}(\Gamma_D) \rightarrow L^{(\beta+1)/\beta}(\Omega)\times L^{\gamma}(\Gamma_D),
\]
via the pairing
\begin{equation}\label{eq:DefOfAuxA_u1}
     \left\langle A_u \begin{bmatrix} m \\ h \end{bmatrix} \, , \, \begin{bmatrix} \mu \\ k \end{bmatrix} \right\rangle :=  \left\langle A \begin{bmatrix} m \\ u \\ h \end{bmatrix} \, ,\,  \begin{bmatrix} \mu \\ 0 \\ k \end{bmatrix} \right\rangle.
\end{equation}

Because \( A_u \) is a restriction of the full operator \( A \) evaluated at a fixed \( u \), it naturally inherits the monotonicity and hemicontinuity of $A$. Furthermore, due to the strictly increasing nature of the coupling \( g \) and the boundary penalty term, \( A_u \) is strictly monotone.
\begin{proposition} \label{prop:SepAuxA_u1Mono}
Suppose that Assumptions~\ref{assumset:S} and \ref{assumset:D} hold, and let \( u \in W^{1,\gamma}(\Omega) \) be fixed.
Then, the operator \( A_u: L^{\beta+1}(\Omega) \times L^{\gamma'}(\Gamma_D) \to L^{\frac{\beta+1}{\beta}}(\Omega) \times L^{\gamma}(\Gamma_D) \), defined by \eqref{eq:DefOfAuxA_u1}, is strictly monotone on the cone of nonnegative pairs \((m,h)\).
\end{proposition}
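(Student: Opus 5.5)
We expand the monotonicity pairing of $A_u$ directly from \eqref{eq:DefOfAuxA_u1} and \eqref{eq:A_rigorous_pairing}, keeping $u$ fixed. Take two nonnegative pairs $(m,h)$ and $(\mu,k)$ in $L^{\beta+1}(\Omega)\times L^{\gamma'}(\Gamma_D)$. Since the test triplet has zero $u$-component, the transport term $\int_\Omega mD_pH(x,Du)\cdot Dv\dx$ and the boundary terms $-\int_{\Gamma_N}jv\ds+\int_{\Gamma_D}hv\ds$ drop out. Both evaluations share the same $u$, so the terms $-H(x,Du)$ and $-u|_{\Gamma_D}$ cancel in the difference. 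This leaves
\[
\left\langle A_u \begin{bmatrix} m \\ h \end{bmatrix} - A_u \begin{bmatrix} \mu \\ k \end{bmatrix}, \begin{bmatrix} m-\mu \\ h-k \end{bmatrix}\right\rangle
= \int_\Omega \bigl(g(m)-g(\mu)\bigr)(m-\mu)\dx + \int_{\Gamma_D}\bigl(h^{\gamma'-1}-k^{\gamma'-1}\bigr)(h-k)\ds ,
\]
with $\epsilon=1$ as fixed in \eqref{eq:TheFullSepOpA}. Each integrand is finite and integrable by the bounds in Proposition~\ref{prop:WellDefinedness}, so the split into two integrals is legitimate.

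Both integrands are pointwise nonnegative. The first is nonnegative because $g$ is strictly increasing (Assumption~\ref{assume:S.1}). The second is nonnegative because $\gamma'-1>0$, so $t\mapsto t^{\gamma'-1}$ is strictly increasing on $[0,\infty)$. This already gives monotonicity.

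For strictness, suppose the pairing vanishes. Then each nonnegative integral vanishes, so each integrand is zero a.e. Fix a point where $(g(m)-g(\mu))(m-\mu)=0$. If $m\neq\mu$ there, strict increase of $g$ gives $(g(m)-g(\mu))(m-\mu)>0$, a contradiction. Hence $m=\mu$ a.e. in $\Omega$, and the same argument for $t^{\gamma'-1}$ gives $h=k$ $\cH^{d-1}$-a.e. on $\Gamma_D$. So $(m,h)=(\mu,k)$ in $L^{\beta+1}(\Omega)\times L^{\gamma'}(\Gamma_D)$, which is strict monotonicity in the sense of Definition~\ref{def:monotonicity}.

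There is no serious obstacle. The one point needing care is that both arguments must lie in $[0,\infty)$, where $g$ and $t^{\gamma'-1}$ are defined and strictly increasing. This is exactly why the statement restricts $A_u$ to the cone of nonnegative pairs.
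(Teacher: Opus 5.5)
Your proposal is correct and follows essentially the same route as the paper: expand the pairing so that the $H(x,Du)$ and $u|_{\Gamma_D}$ terms cancel, leaving $\int_\Omega (g(m)-g(\mu))(m-\mu)\dx + \int_{\Gamma_D}(h^{\gamma'-1}-k^{\gamma'-1})(h-k)\ds$, whose integrands are nonnegative and vanish only where the pairs coincide. The paper states strictness directly (distinct pairs force one integral to be positive) rather than through your contrapositive, but the two arguments are the same.
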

\begin{proof}
Let \( u \in W^{1,\gamma}(\Omega) \) be fixed, and let \( (m,h), (\mu,k) \in L^{\beta+1}(\Omega) \times L^{\gamma'}(\Gamma_D) \) be distinct pairs of nonnegative functions. To establish the strict monotonicity of \( A_u \), we need to show that
    \begin{align*}
        &\left\langle A_u \begin{bmatrix} m \\ h \end{bmatrix} - A_u \begin{bmatrix} \mu \\ k\end{bmatrix} \, ,\, \begin{bmatrix} m \\ h \end{bmatrix} - \begin{bmatrix} \mu \\ k\end{bmatrix} \right\rangle \\
        &= \int_\Omega [g(m)-g(\mu)] (m-\mu) \dx + \int_{\Gamma_D} (h^{\gamma'-1}-k^{\gamma'-1})(h-k) \ds>0.
    \end{align*}
    Since \( g \) and \( z \mapsto z^{\gamma'-1} \) are strictly increasing, the integrands are strictly positive wherever the functions differ. Since \( (m,h) \not\equiv (\mu,k) \), at least one integral is strictly positive, implying the inequality is strict. 
\end{proof}

Finally, the following proposition establishes the coercivity of \( A_u \), which is essential for solving the sub-problem in Section~\ref{sec:SepExiste}.
\begin{corollary}\label{cor:SepAuxA_u1_continuity}
    Suppose that Assumptions \ref{assumset:S} and \ref{assumset:D} hold, and let \( u \in W^{1,\gamma}(\Omega) \) be fixed. Moreover,
    let \( A_u: L^{\beta+1}(\Omega) \times L^{\gamma'}(\Gamma_D) \to L^{\frac{\beta+1}{\beta}}(\Omega) \times L^{\gamma}(\Gamma_D) \) be the operator defined by \eqref{eq:DefOfAuxA_u1}. Let \( (m_0,h_0), (m_1, h_1) \in L^{\beta+1}(\Omega) \times L^{\gamma'}(\Gamma_D) \) be fixed pairs of nonnegative functions, and define the convex combination
    \[
        (m_\th ,h_\th):= \th (m_1, h_1) + (1-\th)(m_0, h_0)
    \]
    for \( \th \in [0,1] \). Then, for every \( (\mu, k) \in L^{\beta+1}(\Omega) \times L^{\gamma'}(\Gamma_D) \), the mapping
    \[
        \th \mapsto \left\langle A_u \begin{bmatrix} m_\th \\ h_\th\end{bmatrix} \, ,\, \begin{bmatrix} \mu \\ k\end{bmatrix} \right\rangle
    \]
    is continuous on \( [0,1] \).
\end{corollary}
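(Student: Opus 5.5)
The plan is to reduce the corollary to Proposition~\ref{prop:SepA_continuity}, since $A_u$ is just $A$ evaluated along a special convex combination and tested against a special test triplet. Fix $u\in W^{1,\gamma}(\Omega)$ and set $u_0=u_1=u$. Then $(m_0,u,h_0)$ and $(m_1,u,h_1)$ lie in $\cX^+$, because $m_i\ge 0$ and $h_i\ge 0$ and the $u$-component is unconstrained. Their convex combination is exactly $(m_\th,u,h_\th)$, because $u_\th=\th u+(1-\th)u=u$. By the definition \eqref{eq:DefOfAuxA_u1}, for every $(\mu,k)$ we have
\[
\left\langle A_u \begin{bmatrix} m_\th \\ h_\th\end{bmatrix},\begin{bmatrix} \mu \\ k\end{bmatrix}\right\rangle
=\left\langle A \begin{bmatrix} m_\th \\ u_\th \\ h_\th\end{bmatrix},\begin{bmatrix} \mu \\ 0 \\ k\end{bmatrix}\right\rangle .
\]
The triplet $(\mu,0,k)$ belongs to $\cX$. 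Proposition~\ref{prop:SepA_continuity} therefore applies with test triplet $(\mu,0,k)$, and the right-hand side is continuous in $\th\in[0,1]$. This is exactly the claim.

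For a self-contained check, I would expand the pairing explicitly:
\[
\int_\Omega[-H(x,Du)+g(m_\th)]\mu\dx+\int_{\Gamma_D}\bigl(-u+h_\th^{\gamma'-1}\bigr)k\ds .
\]
The $H$-term does not depend on $\th$ and is finite by Proposition~\ref{prop:WellDefinedness}. The term $\int_{\Gamma_D} uk\ds$ is likewise constant in $\th$. For the term $g(m_\th)\mu$, I would use continuity of $g$ (Assumption~\ref{assume:S.0}) for pointwise convergence. For domination I would use Assumption~\ref{assume:S.3} together with Lemma~\ref{lem:convProdSC1} with $p_0=p_1$, which gives $|g(m_\th)\mu|\le C(m_0^\beta+m_1^\beta+1)|\mu|$. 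Young's inequality with exponents $\frac{\beta+1}{\beta}$ and $\beta+1$ then bounds this by an $L^1(\Omega)$ function independent of $\th$. For the boundary term, $h_\th^{\gamma'-1}\le C(h_0^{\gamma'-1}+h_1^{\gamma'-1})$, which holds both when $\gamma'-1\le1$ (subadditivity) and when $\gamma'-1>1$ (convexity). Combined with Young's inequality this gives the dominating function $C(h_0^{\gamma'}+h_1^{\gamma'}+|k|^{\gamma'})\in L^1(\Gamma_D)$. Dominated convergence then yields continuity.

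I expect no real obstacle. The only points to check are that the nonnegativity requirement of $\cX^+$ is met, which it is by hypothesis, and that the $\th$-independent dominating functions are integrable. Both are already handled by the bounds used in Proposition~\ref{prop:SepA_continuity}.
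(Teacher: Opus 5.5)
Your proposal is correct and follows the paper's proof, which simply notes that $A_u$ is $A$ with $u$ held fixed and invokes Proposition~\ref{prop:SepA_continuity}. Your explicit choice $u_0=u_1=u$ with test triplet $(\mu,0,k)\in\cX$ makes that reduction precise, and your supplementary dominated-convergence check is also sound.
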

\begin{proof}
Since $A_u$ is a restriction of $A$ (with $u$ fixed), the result follows immediately from Proposition~\ref{prop:SepA_continuity}.
\end{proof}

The following result establishes the coercivity of \( A_u \).

\begin{proposition}\label{prop:SepACoerformandh}
Suppose that Assumptions~\ref{assumset:D} and \ref{assumset:S} hold.
    For a fixed \( u \in W^{1,\gamma}(\Omega) \), let \( A_u \) be the operator defined by \eqref{eq:DefOfAuxA_u1}. Then, for \( m \geq 0 \) and \( h \geq 0 \), we have
    \[
        \frac{1}{\|m\|_{L^{\beta+1}(\Omega)} + \|h\|_{L^{\gamma'}(\Gamma_D)}}  \left\langle A_u \begin{bmatrix} m \\ h \end{bmatrix}-A_u \begin{bmatrix} 0 \\ 0 \end{bmatrix} \, ,\,  \begin{bmatrix} m \\ h \end{bmatrix} \right\rangle \longrightarrow \infty
    \]
    as \( \|m\|_{L^{\beta+1}(\Omega)}+\|h\|_{L^{\gamma'}(\Gamma_D)} \to \infty \).
\end{proposition}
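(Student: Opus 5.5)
We compute the pairing explicitly from \eqref{eq:DefOfAuxA_u1} and \eqref{eq:A_rigorous_pairing} with $\epsilon=1$. Since the $u$-component of the test triplet is zero, only the first and third lines survive, and
\[
\left\langle A_u \begin{bmatrix} m \\ h \end{bmatrix}-A_u \begin{bmatrix} 0 \\ 0 \end{bmatrix} , \begin{bmatrix} m \\ h \end{bmatrix} \right\rangle
= \int_\Omega \bigl(g(m)-g(0)\bigr)\, m \dx + \int_{\Gamma_D} h^{\gamma'} \ds .
\]
The $-H(x,Du)$ and $-u$ contributions cancel in the difference. This is the key simplification: the fixed $u$ disappears entirely, so no estimate on $Du$ is needed.

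Next we bound each term from below. By Assumption~\ref{assume:S.3}, $g(m)m \ge C^{-1}m^{\beta+1} - Cm$. Since $g(0)$ is a finite constant, this gives
\[
\int_\Omega (g(m)-g(0))m\dx \ge C^{-1}\|m\|_{L^{\beta+1}}^{\beta+1} - C\|m\|_{L^1} \ge C^{-1}\|m\|_{L^{\beta+1}}^{\beta+1} - C\|m\|_{L^{\beta+1}},
\]
using boundedness of $\Omega$ and Hölder. Young's inequality absorbs the linear term: $C\|m\| \le \frac{1}{2C}\|m\|^{\beta+1} + C'$. The boundary term is exactly $\|h\|_{L^{\gamma'}(\Gamma_D)}^{\gamma'}$.

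Set $R := \|m\|_{L^{\beta+1}}+\|h\|_{L^{\gamma'}}$. If $R\to\infty$, then at least one of the two norms is at least $R/2$, and both powers $\beta+1>1$ and $\gamma'>1$ are superlinear. Hence
\[
\frac{c\|m\|^{\beta+1} + \|h\|^{\gamma'} - C}{R} \ge \frac{c\,\min\{(R/2)^{\beta+1},(R/2)^{\gamma'}\} - C}{R}\to\infty .
\]
Here we use that the other term is nonnegative (after absorbing constants), so it can simply be dropped.

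The only delicate point is the sign handling: $g(m)-g(0)$ may be negative for small $m$, because the lower bound in S.3 is only asymptotic. This is handled by the Hölder/Young absorption above, which costs only an additive constant. Since $g$ is increasing, $(g(m)-g(0))m\ge 0$ pointwise anyway, so the $m$-term is in fact nonnegative. That makes dropping the smaller term legitimate without any further care.
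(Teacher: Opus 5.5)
Your proposal is correct and follows the paper's proof. The difference reduces to $\int_\Omega m\bigl(g(m)-g(0)\bigr)\dx+\int_{\Gamma_D}h^{\gamma'}\ds$, Assumption~\ref{assume:S.3} gives the lower bound $C^{-1}\|m\|_{L^{\beta+1}(\Omega)}^{\beta+1}-C\bigl(\|m\|_{L^{\beta+1}(\Omega)}+1\bigr)+\|h\|_{L^{\gamma'}(\Gamma_D)}^{\gamma'}$, and superlinearity of both powers concludes, with your $R/2$ split making explicit the final step the paper leaves implicit. One minor slip: $g(m)-g(0)$ is never negative for $m\ge 0$, because $g$ is strictly increasing (Assumption~\ref{assume:S.1}); in any case the terms you drop are powers of norms, so the closing ``sign handling'' paragraph is unnecessary.
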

\begin{remark}[On the assumptions in Proposition~\ref{prop:SepACoerformandh}]
Proposition~\ref{prop:SepACoerformandh} is stated under the standing hypotheses (in particular, Assumptions~\ref{assumset:S}, together with the exponent specification in Assumption~\ref{assume:D.1}) so that
\[
A_u: L^{\beta+1}(\Omega)\times L^{\gamma'}(\Gamma_D)\;\longrightarrow\; L^{(\beta+1)/\beta}(\Omega)\times L^{\gamma}(\Gamma_D)
\]
is a well-defined operator. However, the \emph{coercivity estimate itself} uses only the superlinearity of the coupling and the boundary penalty.
More precisely, we see from the proof below that once the pairing \eqref{eq:DefOfAuxA_u1} is meaningful, the proof of coercivity relies only on Assumption~\ref{assume:S.3} and the fact that $\beta>0$ (hence $\beta+1>1$) and $\gamma'>1$.
In particular, no structural assumption on $H$ and no assumption on the inflow $j$ is used in this coercivity argument.
Moreover, the constants in the coercivity bound are independent of the fixed $u$, since the terms involving $H(x,Du)$ and $u|_{\Gamma_D}$ cancel in the difference
$A_u(m,h)-A_u(0,0)$.
\end{remark}
\begin{proof}
    Fix \( u \in W^{1,\gamma}(\Omega) \), and consider arbitrary nonnegative functions \( m \in L^{\beta+1}(\Omega) \) and \( h \in L^{\gamma'}(\Gamma_D) \).
From Assumption~\ref{assume:S.3}, 
 $m(g(m) - g(0)) \geq c m^{\beta+1} - C(m+1)$. 
Accordingly, 
we obtain
    \begin{equation} \label{eq:SepACoer1Proof1}
    \begin{split}
    \left\langle A_u \begin{bmatrix} m \\ h \end{bmatrix}- A_u \begin{bmatrix} 0 \\ 0 \end{bmatrix}\, ,\, \begin{bmatrix} m \\ h \end{bmatrix} \right\rangle
        &= \int_\Omega m \bigl(g(m)-g(0)\bigr) \dx + \int_{\Gamma_D} h^{\gamma'} \ds\\
        &\geq C^{-1} \|m\|_{L^{\beta+1}(\Omega)}^{\beta+1} - C(\|m\|_{L^{\beta+1}(\Omega)} + 1) + \|h\|_{L^{\gamma'}(\Gamma_D)}^{\gamma'}.
    \end{split}
    \end{equation}
     for some \( C > 1 \), independent of \( m \) and \( h \).
    %Let \( \rho = \min(\beta+1, \gamma') \). By Assumption~\ref{assumset:D}, \(\beta > 0\) and \(\gamma > 1\), which implies \(\rho > 1\). Using the inequality \(t^p \ge t^\rho - 1\) for \(t \ge 0\) and \(p \ge \rho\), followed by the convexity inequality \(a^\rho + b^\rho \ge 2^{1-\rho}(a+b)^\rho\) for \(a,b \ge 0\), we deduce that
    % \[
    %     \|m\|_{L^{\beta+1}(\Omega)}^{\beta+1}+\|h\|_{L^{\gamma'}(\Gamma_D)}^{\gamma'} \ge \|m\|_{L^{\beta+1}(\Omega)}^{\rho}+\|h\|_{L^{\gamma'}(\Gamma_D)}^{\rho} - 2 \ge 2^{1-\rho} \bigl( \|m\|_{L^{\beta+1}(\Omega)} + \|h\|_{L^{\gamma'}(\Gamma_D)} \bigr)^\rho - 2.
    % \]
    % Substituting this into \eqref{eq:SepACoer1Proof1}, we obtain
    Thus
    \[
    \begin{split}
    \frac{\left\langle A_u \begin{bmatrix} m \\ h \end{bmatrix}- A_u \begin{bmatrix} 0 \\ 0 \end{bmatrix}\, ,\, \begin{bmatrix} m \\ h \end{bmatrix} \right\rangle}{\|m\|_{L^{\beta+1}(\Omega)}+\|h\|_{L^{\gamma'}(\Gamma_D)}}
    \to \infty,
    % &\geq \frac{C^{-1} 2^{1-\rho} \bigl( \|m\|_{L^{\beta+1}(\Omega)} + \|h\|_{L^{\gamma'}(\Gamma_D)} \bigr)^\rho - C'(\|m\|_{L^{\beta+1}(\Omega)}+1)}{\|m\|_{L^{\beta+1}(\Omega)}+\|h\|_{L^{\gamma'}(\Gamma_D)}}.
    \end{split}
    \]
as \( \|m\|_{L^{\beta+1}(\Omega)}+\|h\|_{L^{\gamma'}(\Gamma_D)} \to \infty \), concluding the proof.
\end{proof}

\subsection{Constant-Shift Degeneracy and the Failure of Coercivity} \label{subsec:failOfCoercivity}

In this subsection, we demonstrate that the operator \( A \) fails to be coercive with respect to the variable \( u \). This failure prevents the direct application of the Browder--Minty theorem.
We identify two specific cases where coercivity breaks down. To this end, we fix \( m \) and \( h \) and introduce the auxiliary operator
\[
    A_{m,h}: W^{1,\gamma}(\Omega) \to W^{-1,\gamma'}(\Omega),
\]
which we define via the restriction of the pairing as follows:
\begin{equation}\label{eq:DefOfAuxA_mh}
     \left\langle A_{m,h} u  \, , \, v \right\rangle :=  \left\langle A \begin{bmatrix} m \\ u \\ h \end{bmatrix}\, ,\,  \begin{bmatrix} 0 \\ v \\ 0 \end{bmatrix} \right\rangle.
\end{equation}

\paragraph{The vanishing density case:} Under the condition $m \equiv 0$, we observe that the interaction term in the pairing drops out, leaving
\[
\langle A_{0,h} u , u \rangle = - \int_{\Gamma_N} j u \ds + \int_{\Gamma_D} h u \ds.
\]
Applying H\"older's inequality and the trace inequality, we obtain
\[
\left|\langle A_{0,h} u , u \rangle\right| = \left|- \int_{\Gamma_N} ju \ds + \int_{\Gamma_D} h u \ds\right| \leq C\big(\|j\|_{L^{\gamma'}(\Gamma_N)}+ \|h\|_{L^{\gamma'}(\Gamma_D)}\big) \|u\|_{W^{1,\gamma}(\Omega)},
\]
where \( C \) depends only on \( \gamma \) and \( \Omega \). This linear upper bound implies that the ratio \( \langle A_{0,h} u, u \rangle / \|u\|_{W^{1,\gamma}(\Omega)} \) remains bounded, which violates the coercivity condition (Definition~\ref{def:coercivity}).

\paragraph{The flux compatibility case:}
When the boundary fluxes balance, i.e., \( \int_{\Gamma_D} h \ds = \int_{\Gamma_N} j \ds \).
This case is significant because this identity is a necessary condition for the existence of solutions (see Subsection~\ref{subsec:DomainOfOpSepCase}).
In this setting, the duality pairing is invariant under simultaneous constant shifts. Since \(D(u+c_1)=Du\) and \(D(v+c_2)=Dv\), the interior integral term remains unchanged.
 Consequently, for any constants \( c_1, c_2 \in \R \), the operator \( A_{m,h} \) satisfies
\[
\langle A_{m,h} (u + c_1), v + c_2 \rangle = \langle A_{m,h} u, v \rangle
- c_2 \left( \int_{\Gamma_N} j \ds - \int_{\Gamma_D} h \ds\right) = \langle A_{m,h} u, v \rangle.
\]
Specifically, setting \( v = u \) and \( c_1 = c_2 = c \), we obtain
\[
\left\langle A_{m,h} (u + c), u + c \right\rangle = \left\langle A_{m,h} u, u \right\rangle.
\]
This invariance under constant shifts allows $c \to \infty$, driving $\|u + c\|_{W^{1,\gamma}(\Omega)} \to \infty$ while $\langle A_{m,h}(u + c), u + c \rangle$ remains unchanged.
Thus, \( A_{m,h} \) cannot be coercive. Hence, the operator \( A \) fails to be coercive as well. In Section~\ref{sec:SepExiste}, we resolve this lack of coercivity by reformulating the problem on a quotient space.

\section{Existence of Solutions} \label{sec:SepExiste}

In this section, we establish the existence of solutions to the penalized variational inequality \eqref{eq:varIneq}.
As in the previous section, we work
in the normalized case $\epsilon=1$ (so $A=A_1$). The passage to the limit $\epsilon \to 0^+$, which yields solutions to the original MFG system, is addressed in Section~\ref{sec:backToMFG}.
The primary obstruction to a direct application of the Browder--Minty theorem is the operator's lack of coercivity in the variable \( u \), stemming from the system's invariance to constant shifts (see Subsection~\ref{subsec:failOfCoercivity}). We circumvent this issue by first solving for the pair \( (m_u, h_u) \) for each fixed \( u \), thereby isolating the coercivity failure. We then restore coercivity by reformulating the problem on a quotient space.

The proof proceeds in the following steps.
\begin{enumerate}[label=\roman*.]
    \item Establish the existence and uniqueness of \( (m_u, h_u) \) for each fixed \( u \).
    \item Modify the operator's domain to circumvent the coercivity failure in \( u \).
    \item Redefine the operator on the new domain, incorporating the solutions \( (m_u, h_u) \).
    \item Prove the coercivity of the modified operator and establish the existence of solutions.
\end{enumerate}

\subsection{Solvability of the Auxiliary Problem for $(m, h)$ }
\label{subsec:SolvingFormAndh}

In Subsection~\ref{subsec:TheSepOpCoercivityInmandh}, we established a partial coercivity result for the operator \( A \) and introduced the auxiliary operator \( A_u \), defined for fixed \( u \) through the restricted pairing \eqref{eq:DefOfAuxA_u1}.
In this subsection, we fix the function \( u \) and exploit this partial coercivity to establish the existence of solutions for the variational inequality:
\begin{equation}\label{eq:variationalIneqForSepA_u1}
	\left\langle A_u \begin{bmatrix} m \\ h \end{bmatrix}\, ,\,  \begin{bmatrix} \mu - m \\ k - h \end{bmatrix} \right\rangle \geq 0 \qquad \forall \, (\mu, k) \in L^{\beta+1}(\Omega) \times  L^{\gamma'}(\Gamma_D) \text{ with } \mu, k \geq 0.
\end{equation}
The solution we obtain is unique and denoted by \( (m_u, h_u) \in L^{\beta+1}(\Omega) \times  L^{\gamma'}(\Gamma_D) \), as shown below.  

\begin{theorem}\label{thm:Existence_of_m_and_h}
    Suppose that Assumptions~\ref{assumset:S} and \ref{assumset:D} hold, and let \( u \in W^{1,\gamma}(\Omega) \) be fixed. 
    Let \(\mathcal{K}:=\{(m,h) \in L^{\beta+1}(\Omega) \times L^{\gamma'}(\Gamma_D) : m \geq 0,\; h \geq 0\}\).
Then, there exists a unique pair \( (m_u,h_u) \in \mathcal{K} \) that solves the variational inequality \eqref{eq:variationalIneqForSepA_u1}.
\end{theorem}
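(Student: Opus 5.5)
Existence will follow from the unbounded Browder--Minty theorem (Lemma~\ref{lem:UnboundedMintyExistence}) applied to $A_u$ on the reflexive space $L^{\beta+1}(\Omega)\times L^{\gamma'}(\Gamma_D)$ with the closed convex set $\mathcal{K}$. Uniqueness will come from the strict monotonicity in Proposition~\ref{prop:SepAuxA_u1Mono}. Along the way I will also identify the solution explicitly.

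\textbf{Existence.} I first check the hypotheses of Lemma~\ref{lem:UnboundedMintyExistence}.
\begin{enumerate}[label=\roman*.]
\item $\mathcal{K}$ is nonempty (it contains $(0,0)$) and convex. It is closed because $L^p$-convergence gives an a.e.\ convergent subsequence, so nonnegativity is preserved.
\item $A_u$ maps $\mathcal{K}$ into $L^{(\beta+1)/\beta}(\Omega)\times L^{\gamma}(\Gamma_D)$, by the first and third items in the proof of Proposition~\ref{prop:WellDefinedness}.
\item Monotonicity follows from Proposition~\ref{prop:SepAuxA_u1Mono}.
\item Hemicontinuity along segments is Corollary~\ref{cor:SepAuxA_u1_continuity}. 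For monotone maps this is equivalent to continuity on finite-dimensional subspaces, by the remark following Definition~\ref{def:continuity}.
\item Coercivity with base point $\varphi=(0,0)\in\mathcal{K}$ is exactly Proposition~\ref{prop:SepACoerformandh}, since $\|(m,h)-\varphi\|=\|m\|_{L^{\beta+1}}+\|h\|_{L^{\gamma'}}$.
\end{enumerate}
The lemma then yields $(m_u,h_u)\in\mathcal{K}$ solving \eqref{eq:variationalIneqForSepA_u1}.

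\textbf{Uniqueness.} Suppose $(m_1,h_1)$ and $(m_2,h_2)$ are two solutions. Test the inequality for the first with $(\mu,k)=(m_2,h_2)$, test the inequality for the second with $(m_1,h_1)$, and add. This gives
\[
\left\langle A_u\begin{bmatrix} m_1\\ h_1\end{bmatrix}-A_u\begin{bmatrix} m_2\\ h_2\end{bmatrix},\begin{bmatrix} m_1-m_2\\ h_1-h_2\end{bmatrix}\right\rangle\le 0.
\]
Strict monotonicity (Proposition~\ref{prop:SepAuxA_u1Mono}) then forces $(m_1,h_1)=(m_2,h_2)$ a.e.

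\textbf{Explicit formula and main obstacle.} The problem decouples pointwise, so one can also obtain the solution directly:
\[
m_u=g^{-1}(H(x,Du)),\qquad h_u=(u|_{\Gamma_D})_+^{\gamma-1}.
\]
Using Corollary~\ref{cor:boundOng^-1(H)1} and $\gamma=\alpha(\beta+1)/\beta$, we have $m_u^{\beta+1}\le C(|Du|^{\gamma}+1)$, so $m_u\in L^{\beta+1}(\Omega)$. Similarly $h_u\in L^{\gamma'}(\Gamma_D)$, because $(\gamma-1)\gamma'=\gamma$ and the trace lies in $L^\gamma(\Gamma_D)$. The variational inequality holds pointwise by the complementarity in item~II of Subsection~\ref{subsec:DerivationOfMonoOp}. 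I would record this formula as a remark for later use.

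The only delicate point is hemicontinuity in the abstract route. One must make sure that the dominated convergence bounds in Proposition~\ref{prop:SepA_continuity} only use $m_\theta,h_\theta\ge0$, which holds on $\mathcal{K}$. Since the explicit formula gives an independent existence proof, this point is not a serious obstruction.
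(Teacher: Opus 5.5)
Your proposal is correct and follows the paper's proof. Existence comes from the unbounded Browder--Minty lemma (Lemma~\ref{lem:UnboundedMintyExistence}) on the closed convex cone $\mathcal{K}$, using Proposition~\ref{prop:SepAuxA_u1Mono}, Corollary~\ref{cor:SepAuxA_u1_continuity} and Proposition~\ref{prop:SepACoerformandh}, and uniqueness comes from adding the two tested inequalities and invoking strict monotonicity. Your pointwise construction $m_u=g^{-1}(H(x,Du))$, $h_u=(u|_{\Gamma_D})_+^{\gamma-1}$ is also a valid self-contained existence argument; the paper instead recovers these formulas from the variational inequality afterwards, in Corollary~\ref{cor:FormulaFor_m_and_h}.
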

\begin{proof}
Due to Proposition~\ref{prop:SepACoerformandh}, Proposition~\ref{prop:SepAuxA_u1Mono}, and Corollary~\ref{cor:SepAuxA_u1_continuity}, the operator \( A_u \) is coercive, strictly monotone, and hemicontinuous. Therefore, we apply Lemma~\ref{lem:UnboundedMintyExistence} with the nonempty, closed, and convex set $\mathcal{K}$ to show the existence of a pair \( (m_u,h_u) \in \mathcal{K} \) that solves \eqref{eq:variationalIneqForSepA_u1}.

To prove uniqueness, let \( (m_u,h_u) \) and \( (\bar{m}_u ,\bar{h}_u ) \) be two solutions. The variational inequality implies
\begin{align*}
\left\langle A_u \begin{bmatrix} m_u \\ h_u \end{bmatrix} \, ,\, 
\begin{bmatrix} \bar m_u - m_u \\ \bar h_u - h_u \end{bmatrix} \right\rangle &\ge 0, \\
\left\langle A_u \begin{bmatrix} \bar m_u \\ \bar h_u \end{bmatrix} \, ,\, 
\begin{bmatrix} m_u - \bar m_u \\ h_u - \bar h_u \end{bmatrix} \right\rangle &\ge 0.
\end{align*}
Combining these inequalities yields
\[
\left\langle A_u \begin{bmatrix} m_u \\ h_u \end{bmatrix} - A_u \begin{bmatrix} \bar m_u \\ \bar h_u \end{bmatrix} \, ,\, 
\begin{bmatrix} m_u - \bar m_u \\ h_u - \bar h_u \end{bmatrix} \right\rangle \le 0.
\]
Since \(A_u\) is strictly monotone (Proposition~\ref{prop:SepAuxA_u1Mono}), the equality must hold, which implies \( (m_u, h_u) = (\bar{m}_u, \bar{h}_u) \).
\end{proof}

Theorem~\ref{thm:Existence_of_m_and_h} guarantees the existence and uniqueness of the pair \( (m_u, h_u) \), which satisfies the variational inequality \eqref{eq:variationalIneqForSepA_u1}. While this establishes solvability, we can gain further insight by considering specific test functions in the inequality. This leads to the following Corollary that establishes pointwise characterizations of \( m_u \) and \( h_u \).
\begin{corollary}\label{cor:FormulaFor_m_and_h}
    Suppose that Assumptions~\ref{assumset:S} and \ref{assumset:D} hold. Let \( (m_u,h_u) \in \mathcal{K} \) be the unique solution provided by Theorem~\ref{thm:Existence_of_m_and_h} to the inequality~\eqref{eq:variationalIneqForSepA_u1}. Then, the following characterizations hold:

    \begin{enumerate}[label=\roman*.]
\item\label{subcor:FormulaFor_m_and_h1} For almost every \( x \in \Omega \),
    \begin{equation*}
        -H(x,Du(x)) + g(m_u(x)) \geq 0 \quad \text{ and } \quad m_u(x)\bigl(-H(x,Du(x)) + g(m_u(x))\bigr) = 0.
    \end{equation*}
    Consequently, for almost every \( x \in \Omega \),
    \begin{equation*}
     m_u(x) = g^{-1}(H(x,Du(x))),
    \end{equation*}
    where \(g^{-1}\) is the extended inverse of \(g\) (see Definition~\ref{def:extendedInverse}).
        \item\label{subcor:FormulaFor_m_and_h2} For almost every \( x \in \Gamma_D \),
        \begin{equation*}
            -u(x) + h_u(x)^{\gamma'-1} \geq 0 \quad \text{ and } \quad h_u(x)(-u(x) + h_u(x)^{\gamma'-1} )= 0.
        \end{equation*}
        Consequently, for almost every \( x \in \Gamma_D \),
        \begin{equation*}
            h_u(x) = u_+^{\gamma-1}(x).
        \end{equation*}
    \end{enumerate}
\end{corollary}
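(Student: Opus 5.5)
The plan is to test the variational inequality \eqref{eq:variationalIneqForSepA_u1} with perturbations of $(m_u,h_u)$ that change only one component, and to localize them to arbitrary measurable sets. By definition \eqref{eq:DefOfAuxA_u1}, with test direction $(\mu-m_u,0,k-h_u)$ the pairing reads
\[
\int_\Omega \bigl(-H(x,Du)+g(m_u)\bigr)(\mu-m_u)\dx + \int_{\Gamma_D}\bigl(-u+h_u^{\gamma'-1}\bigr)(k-h_u)\ds \ge 0 .
\]
The two blocks decouple: choosing $k=h_u$ isolates the interior inequality, and choosing $\mu=m_u$ isolates the boundary one.

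For part~\ref{subcor:FormulaFor_m_and_h1}, set $F:=-H(x,Du)+g(m_u)\in L^{(\beta+1)/\beta}(\Omega)$ (Proposition~\ref{prop:WellDefinedness}). First take $\mu=m_u+\Chi_E$ for a measurable $E\subset\Omega$; this is admissible because $\Omega$ is bounded. It gives $\int_E F\ge 0$ for every $E$, hence $F\ge0$ a.e. Next take $\mu=m_u\Chi_{\Omega\setminus E}$, which is nonnegative. It gives $-\int_E F m_u\ge0$, so $Fm_u\le 0$ a.e., and combined with $F\ge0$, $m_u\ge0$ this yields $m_uF=0$ a.e. To get the formula, fix $x$ and split into cases. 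If $m_u(x)>0$, then $g(m_u(x))=H(x,Du(x))$, so $H(x,Du(x))\ge g(0)$ by monotonicity of $g$, and by Definition~\ref{def:extendedInverse} together with the injectivity of $g$ (Assumption~\ref{assume:S.1}), $m_u(x)=g^{-1}(H(x,Du(x)))$. If $m_u(x)=0$, then $F\ge0$ gives $H(x,Du(x))\le g(0)$. When the inequality is strict, $g^{-1}(H)=0$ by definition; when it is an equality, $g^{-1}(g(0))=0$ by injectivity. In both subcases $m_u(x)=g^{-1}(H(x,Du(x)))$.

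For part~\ref{subcor:FormulaFor_m_and_h2}, argue identically on $\Gamma_D$ with $G:=-u+h_u^{\gamma'-1}\in L^\gamma(\Gamma_D)$. The test functions are $k=h_u+\Chi_E$ (admissible since $|\Gamma_D|<\infty$) and $k=h_u\Chi_{\Gamma_D\setminus E}$; these give $G\ge0$ and $h_uG=0$ a.e. Pointwise, $h_u>0$ forces $h_u^{\gamma'-1}=u$, so $u>0$ and $h_u=u^{1/(\gamma'-1)}=u^{\gamma-1}=u_+^{\gamma-1}$, using $1/(\gamma'-1)=\gamma-1$. If instead $h_u=0$, then $G\ge0$ gives $u\le0$, so again $u_+^{\gamma-1}=0=h_u$.

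No step is a real obstacle. The only points needing care are verifying that each perturbed test function lies in the admissible cone and in the right Lebesgue space, and handling the boundary case $H=g(0)$ in the extended inverse.
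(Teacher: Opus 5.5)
Your proof is correct and is essentially the paper's argument. You freeze one component to decouple the interior and boundary blocks, localize to obtain $F\ge 0$ and $m_uF=0$ a.e.\ in $\Omega$ (resp.\ $G\ge 0$ and $h_uG=0$ a.e.\ on $\Gamma_D$), and then read off $m_u=g^{-1}(H(x,Du))$ and $h_u=u_+^{\gamma-1}$ by a pointwise case split. The differences are minor: you localize with indicators $\Chi_{E}$ and the truncation $m_u\Chi_{\Omega\setminus E}$ instead of the paper's smooth $\varphi$ and $(1\pm\psi)m_u$, which avoids speaking of $C_c^\infty(\Gamma_D)$ on a merely $C^1$ boundary piece; and you explicitly treat the edge case $H(x,Du(x))=g(0)$ via injectivity of $g$, which the paper passes over.
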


\begin{remark}\label{rmk:ForCor:FormulaFor_m_and_h}
    In the case of arbitrary \( \epsilon \), the formula for \( m \) is the same. However, we have that
    \begin{equation}
        h_u = \frac{u_+^{\gamma-1}}{\epsilon^{\gamma}}.
    \end{equation}
\end{remark}

\begin{proof}[\textbf{Proof of Corollary~\ref{cor:FormulaFor_m_and_h}.}] Let \( (m_u,h_u) \in L^{\beta+1}(\Omega) \times L^{\gamma'}(\Gamma_D) \) be the solution  provided by Theorem~\ref{thm:Existence_of_m_and_h}. We prove each statement separately.

\paragraph{\normalfont \emph{\ref{subcor:FormulaFor_m_and_h1}}} 
Consider arbitrary test functions \( \varphi \in C_c^{\infty}(\Omega) \) and \( \phi \in C_c^{\infty}(\Omega) \) such that \( \varphi \geq 0 \) and \( \|\phi\|_{\infty} \leq 1 \).
Define
\[
\mu = \left( 1 + \phi \right) m_u + \varphi \quad \text{ and } \quad k = h_u.
\]
Because \( (m_u,h_u) \) solve the variational inequality~\eqref{eq:variationalIneqForSepA_u1}, we obtain
\[
    \begin{split}
		0 \leq \left\langle A_u \begin{bmatrix} m_u \\ h_u \end{bmatrix}  \, ,\,  \begin{bmatrix}  \phi  \, m_u + \varphi \\ 0 \end{bmatrix}  \right\rangle
        = \int_\Omega (-H(x,Du)+g(m_u))\left( \phi\, m_u + \varphi \right) \dx.
	\end{split}
\]

Because \( \varphi \) is arbitrary with \(\varphi\ge 0\), taking \(\phi\equiv 0\) yields \(-H(x,Du)+g(m_u)\ge 0\) a.e.\ in \(\Omega\).
Then, taking \(\varphi\equiv 0\) and \(\phi=\pm \psi\) with \(0\le \psi\le 1\) gives
\(\int_\Omega (-H(x,Du)+g(m_u))\,\psi\,m_u\,\dx=0\). Hence, \(m_u(-H(x,Du)+g(m_u))=0\) a.e.\ in \(\Omega\).
Taking into account that either \( m_u = 0\) or \( m_u >0 \),  using the definition of the extended inverse, we have
\begin{enumerate}[label=-]
\item If \( m_u = 0\), then \( H(x,Du) \leq g(0) \). Therefore, \( g^{-1}(H(x,Du)) = 0 \).
    \item If \( m_u >0 \), then \( H(x,Du) = g(m_u) \). Therefore, \( g^{-1}(H(x,Du)) = m_u \).
\end{enumerate}

\paragraph{\normalfont \emph{\ref{subcor:FormulaFor_m_and_h2}}} 
Similar to the above, consider arbitrary test functions \( \varphi, \phi \in C_c^{\infty}(\Gamma_D) \)
such that \( \varphi \geq 0 \) and \( \|\phi\|_{\infty} \leq 1 \).
Define
\[
k = \left(1+\phi\right) h_u + \varphi, \quad \text{and set } \mu = m_u.
\]
Because \( (m_u,h_u) \) solve the variational inequality~\eqref{eq:variationalIneqForSepA_u1}, we obtain
\[
\begin{split}
		0 \leq \left\langle A_u \begin{bmatrix} m_u \\ h_u \end{bmatrix}  \, ,\,  \begin{bmatrix} 0 \\  \phi\, h_u + \varphi \end{bmatrix}  \right\rangle = \int_{\Gamma_D} (-u+h_u^{\gamma'-1})(\phi\, h_u + \varphi) \ds.
\end{split}
\]
Because \( \varphi \) is arbitrary with \(\varphi\ge 0\), taking \(\phi\equiv 0\) yields \(-u+h_u^{\gamma'-1}\ge 0\) a.e.\ on \(\Gamma_D\).
Then, taking \(\varphi\equiv 0\) and \(\phi=\pm \psi\) with \(0\le \psi\le 1\) gives
\(\int_{\Gamma_D} (-u+h_u^{\gamma'-1})\,\psi\,h_u\,\ds=0\). Hence, \(h_u(-u+h_u^{\gamma'-1})=0\) a.e.\ on \(\Gamma_D\). Moreover, 
taking into account that  either \( h_u = 0\) or \( h_u >0 \), we have
\begin{enumerate}[label=-]
    \item If \( h_u = 0\), then \( 0 \geq u  \). Therefore,
    \[
       h_u =  0 = u_+^{\gamma-1}.
    \]
    \item If \( h_u >0 \), then \( h_u^{\gamma'-1} = u \). Therefore,
    \[
        h_u = u_+^{\gamma-1}.
    \]
\end{enumerate}
Thus, in both cases, we conclude that
\[
h_u = u_+^{\gamma-1}.
\]
This completes the proof.
\end{proof}

\subsection{The modified domain} \label{subsec:DomainOfOpSepCase}

The failure of coercivity identified in Subsection~\ref{subsec:failOfCoercivity} prevents a direct application of the Browder--Minty theorem on $\cX^+$. This failure is structurally linked to the operator's insensitivity to additive constants in \( u \) when boundary fluxes are balanced. To address this, we define the operator on the quotient space of functions modulo constants.
Specifically, we define the quotient space
\begin{equation} \label{eq:QuotientSpace}
    \cQ = \faktor{W^{1,\gamma}(\Omega)}{\{ u = c \colon c \in \R\}},
\end{equation}
where each equivalence class \( \bar{u} \in \cQ \) is represented by an element \( u \in \bar{u} \).
The quotient space \(\cQ\) 
is reflexive and admits a norm equivalent to $\|Du\|_{L^\gamma}$;
we establish these properties in the following lemma.
\begin{lemma}\label{lem:PropertiesOfcQ}
    The quotient space \( \cQ \) defined in \eqref{eq:QuotientSpace} has the following properties:
    \begin{enumerate}[label=\roman*.]
        \item\label{Sublem:PropertiesOfcQ1} The space \( \cQ \) is reflexive, and its dual space consists precisely of functionals that annihilate constants, i.e.,
        \[
            \cQ' \cong \{ v \in W^{-1,\gamma'}(\Omega) \colon \langle v , u \rangle = 0 \text{ for all } u \equiv c, \, c \in \R \}.
        \]
        \item\label{Sublem:PropertiesOfcQ2} The seminorm
        \begin{equation}\label{eq:QuotientSpaceNorm1}
        \|\bar{u}\|_{\cQ} := \|Du\|_{L^{\gamma}(\Omega)}
        \end{equation}
        defines a norm on \( \cQ \) equivalent to its quotient norm.
    \end{enumerate}
\end{lemma}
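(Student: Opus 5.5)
Both parts reduce to standard functional-analytic facts about quotients of $W^{1,\gamma}(\Omega)$ by the one-dimensional subspace of constants, together with a Poincaré--Wirtinger inequality on the bounded, connected $C^1$ domain $\Omega$. Let $\mathcal{C}:=\{u\equiv c: c\in\R\}$. Since $\Omega$ is bounded, $\mathcal{C}\subset W^{1,\gamma}(\Omega)$, and being finite-dimensional it is closed. Hence $\cQ=W^{1,\gamma}(\Omega)/\mathcal{C}$ with the quotient norm $\|\bar u\|_{W^{1,\gamma}/\mathcal{C}}:=\inf_{c\in\R}\|u+c\|_{W^{1,\gamma}(\Omega)}$ is a Banach space.

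For part~\ref{Sublem:PropertiesOfcQ1}, I would invoke two general facts for a closed subspace $M$ of a Banach space $X$.
\begin{itemize}
\item If $X$ is reflexive, then $X/M$ is reflexive.
\item $(X/M)'$ is isometrically isomorphic to the annihilator $M^\perp=\{\ell\in X':\ell|_M=0\}$, via $\ell\mapsto(\bar u\mapsto\ell(u))$. This map is well defined exactly because $\ell$ vanishes on $M$, and it is isometric by Hahn--Banach.
\end{itemize}
Since $1<\gamma<\infty$, $W^{1,\gamma}(\Omega)$ is reflexive: it embeds isometrically as a closed subspace of $L^\gamma(\Omega)^{d+1}$. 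Applying the two facts with $X=W^{1,\gamma}(\Omega)$ and $M=\mathcal{C}$, and recalling $X'=W^{-1,\gamma'}(\Omega)$, gives exactly the stated description of $\cQ'$. This part is routine.

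For part~\ref{Sublem:PropertiesOfcQ2}, the map $\bar u\mapsto\|Du\|_{L^\gamma}$ is well defined because $D(u+c)=Du$. It is a seminorm. It is definite because $Du=0$ a.e.\ on the connected open set $\Omega$ forces $u$ to be a.e.\ constant, so $\bar u=0$. Equivalence with the quotient norm needs two inequalities.
\begin{itemize}
\item \textbf{Easy direction.} For every $c$, $\|Du\|_{L^\gamma}=\|D(u+c)\|_{L^\gamma}\le\|u+c\|_{W^{1,\gamma}}$. Taking the infimum over $c$ gives $\|Du\|_{L^\gamma}\le\|\bar u\|_{W^{1,\gamma}/\mathcal{C}}$.
\item \textbf{Reverse direction.} Choose the particular constant $c=-u_\Omega$, where $u_\Omega:=\frac{1}{|\Omega|}\int_\Omega u\dx$, so that $\|\bar u\|_{W^{1,\gamma}/\mathcal{C}}\le\|u-u_\Omega\|_{W^{1,\gamma}}$. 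The Poincaré--Wirtinger inequality $\|u-u_\Omega\|_{L^\gamma}\le C_P\|Du\|_{L^\gamma}$ then gives $\|u-u_\Omega\|_{W^{1,\gamma}}\le C\|Du\|_{L^\gamma}$.
\end{itemize}

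The only nontrivial ingredient is the Poincaré--Wirtinger inequality. If needed, I would prove it by the usual compactness contradiction argument. Suppose $u_n$ satisfy $(u_n)_\Omega=0$, $\|u_n\|_{L^\gamma}=1$ and $\|Du_n\|_{L^\gamma}\to0$. Then $(u_n)$ is bounded in $W^{1,\gamma}(\Omega)$. Rellich--Kondrachov, valid because $\Omega$ is bounded with $C^1$ boundary, gives a subsequence with $u_n\to u$ strongly in $L^\gamma$. The limit has $Du=0$, so $u$ is constant by connectedness; it has mean zero, so $u=0$. This contradicts $\|u\|_{L^\gamma}=1$.

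I expect no real obstacle. The point that needs care is that connectedness of $\Omega$ and boundary regularity are used precisely here: connectedness makes the kernel of $D$ exactly the constants, and the $C^1$ boundary gives compactness. Both are part of the standing hypotheses of MFG System~\ref{mfg:2}.
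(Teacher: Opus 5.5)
Your proposal is correct and follows essentially the same route as the paper. Part (i) uses the standard facts that a quotient of the reflexive space $W^{1,\gamma}(\Omega)$ by a closed subspace is reflexive and has the annihilator as its dual (the paper simply cites Megginson), and part (ii) uses the trivial bound $\|Du\|_{L^\gamma}\le\inf_c\|u-c\|_{W^{1,\gamma}}$ together with the choice $c=\avg_\Omega u$ and the Poincar\'e--Wirtinger inequality; your extra details (definiteness via connectedness, the compactness proof of Poincar\'e--Wirtinger) are sound, and the only slip is that the isometry $(X/M)'\cong M^\perp$ follows directly from the definition of the quotient norm, so Hahn--Banach is not needed there.
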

\begin{proof} \emph{\ref{Sublem:PropertiesOfcQ1}} The reflexivity of \( \cQ \) follows from classical functional analysis results (see \cite{megginson1998introduction}, Corollary 1.11.18 and Theorem 1.10.16).

\emph{\ref{Sublem:PropertiesOfcQ2}} The quotient norm of \( \cQ \) is 
\[
    \|\bar{u}\|_{in,\cQ} := \inf_{c \in \R} \|u-c\|_{W^{1,\gamma}(\Omega)}
    = \|Du\|_{L^{\gamma}(\Omega)} + \inf_{c \in \R} \|u-c\|_{L^{\gamma}(\Omega)}.
\]
Setting
\begin{equation}\label{eq:avg}
    c = \avg_{\Omega} u := \frac{1}{|\Omega|} \int_{\Omega} u(x)\dx,
\end{equation}
        and using the Poincar{\'e} inequality, we obtain
    \begin{equation}\label{eq:quitentPonicareIneq}
    \begin{split}
         \|Du\|_{L^{\gamma}(\Omega)} \leq \|\bar u\|_{in,\cQ} \leq \|Du\|_{L^{\gamma}(\Omega)} + \|u-\avg_{\Omega}u\|_{L^{\gamma}(\Omega)} &\leq C \|Du\|_{L^{\gamma}(\Omega)},
    \end{split}
    \end{equation}
where \( C > 1 \) depends only on \( \gamma \) and the domain \( \Omega \). This shows that \(\|\bar{u}\|_{\cQ} := \|Du\|_{L^{\gamma}(\Omega)}\) is an equivalent norm.
\end{proof}

Before we specify the domain modification that affects the function \( h \), we first establish that solutions belong to a specific subset of the function space, allowing us to restrict the operator’s domain without excluding relevant solutions.
More precisely, consider a triplet \((m, u, h)\) that solves \eqref{eq:varIneq}. Fix \(c \in \R\) and \(v \in W^{1,\gamma}(\Omega)\), and test \eqref{eq:varIneq} with \((\mu,k)=(m,h)\) and \(v_{\rm test}=u+v+c\). Then
\[
0 \leq \left\langle A \begin{bmatrix} m \\ u \\ h \end{bmatrix}, \begin{bmatrix} 0 \\ v+c \\ 0 \end{bmatrix} \right\rangle
= \left\langle A \begin{bmatrix} m \\ u \\ h \end{bmatrix}, \begin{bmatrix} 0 \\ v \\ 0 \end{bmatrix} \right\rangle + c \left(-\int_{\Gamma_N} j\,\ds + \int_{\Gamma_D} h\,\ds \right).
\]
Dividing by \(|c|\) and letting \(|c| \to \infty\), we conclude that
\[
0 \leq \pm \left(-\int_{\Gamma_N} j \ds + \int_{\Gamma_D} h \ds\right),
\]
which is only possible if
\begin{equation}\label{eq:compatModDomSepA--1}
\int_{\Gamma_D} h\,\ds = \int_{\Gamma_N} j\,\ds.
\end{equation}

This compatibility condition aligns with the optimization problem \eqref{eq:DerSepA3}. It is also consistent with Lemma~\ref{lem:PropertiesOfcQ}, which imposes that the image of the modified monotone operator belongs to the set of annihilators of constant functions.
In other words, solutions to the variational inequality must satisfy the compatibility condition 
\eqref{eq:compatModDomSepA--1}.
Accordingly, a natural domain to look for a solution is 
\begin{equation} \label{eq:defcYsepA}
    \cY := \Big\{ (m, \bar{u}, h) \in L^{\beta+1}(\Omega)\times \cQ \times L^{\gamma'}(\Gamma_D) : m \geq 0,\; h \geq 0,\; \int_{\Gamma_D} h \ds=\int_{\Gamma_N} j \ds \Big\}.
\end{equation}
We note that \( \cY \) is convex (since $\cQ$ is a linear space and the constraints are affine/positivity).
Since this modification changes the domain's structure, the operator must be redefined accordingly; we address this in the next subsection.

\subsection{The modified operator} \label{subsec:ModSepOperator}

Defining an operator on a quotient space typically requires a set-valued formulation. A key observation is that the image of the operator lies in the annihilator of constant functions (Lemma~\ref{lem:PropertiesOfcQ}). By combining this property with the solution \( (m_u, h_u) \) from Subsection~\ref{subsec:SolvingFormAndh}, we construct a
single-valued operator that circumvents these difficulties.

We begin our analysis by selecting a canonical representative for each equivalence class \( \bar{u} \in \cQ \).
The choice of representative is governed by the following lemma.

\begin{lemma} \label{lem:choiceOfrepForbaru}
Suppose that Assumptions~\ref{assumset:D} hold, and let \( u \in W^{1,\gamma}(\Omega) \). Then, there exists a unique \( \kappa \in \mathbb{R} \) such that
\begin{equation}
\label{defk}
    \int_{\Gamma_D}  (u+\kappa)_+^{\gamma-1}  \ds= \int_{\Gamma_N} j\ds.
\end{equation}
\end{lemma}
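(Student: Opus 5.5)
We define $\Phi:\mathbb{R}\to[0,\infty)$ by
\[
\Phi(\kappa):=\int_{\Gamma_D} (u+\kappa)_+^{\gamma-1}\ds ,
\]
where $u$ is understood through its trace $Tu\in W^{1-\frac1\gamma,\gamma}(\Gamma)\subset L^\gamma(\Gamma_D)$. We will show that $\Phi$ is finite, continuous, nondecreasing, strictly increasing wherever it is positive, vanishes near $-\infty$, and tends to $+\infty$ at $+\infty$. Since $\int_{\Gamma_N} j\ds\in(0,\infty)$ by Assumption~\ref{assume:D.2} (because $j\ge0$, $j\not\equiv0$, and $j\in L^{\gamma'}(\Gamma_N)\subset L^1(\Gamma_N)$ on the bounded surface), the intermediate value theorem will then give existence, and strict monotonicity on $\{\Phi>0\}$ will give uniqueness.

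\emph{Finiteness and continuity.} The pointwise bound $(u+\kappa)_+^{\gamma-1}\le (|u|+|\kappa|)^{\gamma-1}$ holds, and since $\gamma-1<\gamma$ and $|\Gamma_D|<\infty$, the right-hand side lies in $L^1(\Gamma_D)$ by H\"older. This bound is locally uniform in $\kappa$, and $\kappa\mapsto (u(x)+\kappa)_+^{\gamma-1}$ is continuous for each $x$. Dominated convergence therefore gives continuity of $\Phi$ on $\mathbb{R}$.

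\emph{Limits.} As $\kappa\to+\infty$, $(u+\kappa)_+^{\gamma-1}\uparrow+\infty$ pointwise on $\Gamma_D$. Because $|\Gamma_D|>0$, monotone convergence gives $\Phi(\kappa)\to+\infty$. As $\kappa\to-\infty$, the integrand decreases to $0$ pointwise and is dominated by its value at $\kappa=0$, so $\Phi(\kappa)\to0$. Hence there exist $\kappa_1$ with $\Phi(\kappa_1)<\int_{\Gamma_N}j\ds$ and $\kappa_2$ with $\Phi(\kappa_2)>\int_{\Gamma_N}j\ds$, and the intermediate value theorem yields a solution $\kappa$ of \eqref{defk}.

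\emph{Uniqueness.} This is the only delicate point, since $\Phi$ is merely nondecreasing and can be constant, namely equal to $0$, on a half-line. Suppose $\kappa<\kappa'$ both solve \eqref{defk}. Then
\[
\int_{\Gamma_D}\big[(u+\kappa')_+^{\gamma-1}-(u+\kappa)_+^{\gamma-1}\big]\ds=0 .
\]
The integrand is nonnegative, so it vanishes a.e. Consider the set $E:=\{x\in\Gamma_D: u(x)+\kappa'>0\}$. On $E$ the integrand is strictly positive, because $t\mapsto t_+^{\gamma-1}$ is strictly increasing on $(0,\infty)$ and $\gamma>1$. Hence $|E|=0$, which forces $(u+\kappa')_+=0$ a.e. and so $\Phi(\kappa')=0$. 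This contradicts $\Phi(\kappa')=\int_{\Gamma_N}j\ds>0$. Thus the solution is unique, and the main obstacle is resolved precisely by the strict positivity of the inflow $\int_{\Gamma_N} j\ds$.
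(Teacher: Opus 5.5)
Your proposal is correct and follows essentially the same route as the paper. Both arguments get continuity from dominated convergence, the limits $0$ and $+\infty$ at $\mp\infty$, existence from the intermediate value theorem, and uniqueness from the strict increase of $\Phi$ wherever it is positive combined with $\int_{\Gamma_N} j\ds>0$. The only cosmetic difference is that you obtain the limit at $+\infty$ by monotone convergence, where the paper uses a Chebyshev estimate.
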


\begin{proof} First, note that the following function is well-defined 
    \[
        f( \varsigma ) :=  \int_{\Gamma_D}  (u+ \varsigma )_+^{\gamma-1}\ds, 
    \]
because $u \in L^\gamma(\Gamma_D)$ due to the trace theorem. Moreover, $f$ is (strictly) increasing since for any \( \kappa_1 < \kappa_2 \) such that \( f(\kappa_1) > 0 \), we have
    \[
        f(\kappa_1) = \int_{\Gamma_D \cap \{ u+\kappa_1>0\}} (u+\kappa_1)^{\gamma-1}\ds
        < \int_{\Gamma_D \cap \{ u+\kappa_2>0\}} (u+\kappa_2)^{\gamma-1}\ds = f(\kappa_2).
    \]
Therefore, 
there is at most one value 
\( \kappa \) for which \eqref{defk} holds. 

To establish the existence of \( \kappa \), we first note that Assumption~\ref{assume:D.2} implies \( \int_{\Gamma_N} j \ds > 0 \). Thus, it suffices to show that
    \(
    (0, \infty) \subseteq f(\R) .
    \)
By the dominated convergence theorem,
\[
 \lim_{\varsigma  \to -\infty}f( \varsigma ) = 0.
\]
Furthermore,    
since $|\Gamma_D| > 0$ and $u \in L^\gamma(\Gamma_D)$, 
Chebyshev's
inequality implies that 
 $\mathcal{H}^{d-1}(\{|u| > \varsigma/2\}) \to 0$, so $f(\varsigma) \geq (\varsigma/2)^{\gamma-1} \mathcal{H}^{d-1}(\{|u| \leq \varsigma/2\}) \to \infty$.
Accordingly,
    we have
    \[
    \lim_{\varsigma  \to \infty} f( \varsigma ) = +\infty.
    \]
Moreover, for any $M>0$ and all $\varsigma \in [-M,M]$, we have 
    \[
    (u+ \varsigma)_+^{\gamma-1} \leq (|u|+ M)^{\gamma-1} \in  L^{1}(\Gamma_D),
    \]
    since $u \in L^{\gamma}(\Gamma_D)$ by the trace theorem. Therefore, $f$ is continuous by the dominated convergence theorem.
     Consequently, by the intermediate value theorem, \( f( \varsigma ) \) attains all values in \( (0,\infty) \); hence, there exists a solution $\kappa$ to \eqref{defk}.
\end{proof}

This lemma allows us to assign a unique representative \( u^* = u+\kappa \) to each \( \bar{u} \in \cQ \), where \( \kappa \) is the unique constant satisfying
\begin{equation} \label{eq:SepCompatibilityOfhu*}
        \int_{\Gamma_D}  h_{u^*}\ds = \int_{\Gamma_D}  (u+\kappa)_+^{\gamma-1}\ds  = \int_{\Gamma_N} j\ds.
\end{equation}
Lastly, we note that by Corollary~\ref{cor:FormulaFor_m_and_h}, the density \( m_u \) depends only on \( Du \). Since \( u^* = u + \kappa \) differs from \( u \) only by a constant, we have \( Du^* = Du \), and consequently \( m_{u^*} = m_u \). Thus, the modification of the domain affects only the boundary flux \( h_{u^*} \), while the interior transport density remains invariant.
This justifies defining the modified operator \( \bar{B}: \cQ \to \cQ' \) such that for any \( \bar{u}, \bar{v} \in \cQ \),
\begin{equation} \label{eq:defOfsepbarB}
    \langle \bar{B}\bar{u}, \bar{v} \rangle := \left\langle A \begin{bmatrix} m_{u}\\ u^* \\ h_{u^*} \end{bmatrix} \, ,\,  \begin{bmatrix} 0 \\ v \\ 0 \end{bmatrix} \right\rangle,
\end{equation}
where \( v \) is an arbitrary representative of \( \bar{v} \), and \( (m_u, u^*, h_{u^*}) \) is the compatible triplet derived from \( u \).

The operator \( \bar{B} \) is well-defined. The compatible triplet depends only on the class \( \bar{u} \).
Note that $m_u=m_{u^*}$ so either can be used in the definition of $\bar B$.
The compatibility condition \eqref{eq:SepCompatibilityOfhu*} guarantees that the pairing is independent of the representative \( v \). Indeed, for any constant \( c \in \R \),
\[
\left\langle A \begin{bmatrix} m_{u}\\ u^* \\ h_{u^*} \end{bmatrix} \, ,\,  \begin{bmatrix} 0 \\ c \\ 0 \end{bmatrix} \right\rangle
= c \left( -\int_{\Gamma_N} j \ds + \int_{\Gamma_D} h_{u^*} \ds \right) = 0.
\]

\subsection{The key properties of the modified operator}

In this subsection, we verify the monotonicity, hemicontinuity, and coercivity properties of \(\bar{B} \). We begin with the following proposition on the monotonicity of \( \bar{B} \).
\begin{proposition} \label{prop:monotonicityofSepbarB}
Suppose that Assumptions~\ref{assumset:D} and \ref{assumset:S} hold. Then, the operator $\bar{B}$ defined by \eqref{eq:defOfsepbarB} is monotone.
\end{proposition}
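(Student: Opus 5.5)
The plan is to reduce the monotonicity of $\bar B$ to the monotonicity of the full operator $A$ (Proposition~\ref{prop:SepAMono}), combined with the variational inequalities satisfied by the auxiliary pairs $(m_u,h_{u^*})$ (Theorem~\ref{thm:Existence_of_m_and_h}).

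Fix $\bar u,\bar w\in\cQ$ with compatible triplets $(m_u,u^*,h_{u^*})$ and $(m_w,w^*,h_{w^*})$, where $m_u=m_{u^*}$ and $m_w=m_{w^*}$. Both triplets lie in $\cX^+$. Since $\bar B$ does not depend on the chosen representative of the test class, we may take $u^*-w^*$ as the representative of $\bar u-\bar w$. Then
\[
\langle \bar B\bar u-\bar B\bar w,\bar u-\bar w\rangle
=\left\langle A\begin{bmatrix} m_u\\ u^*\\ h_{u^*}\end{bmatrix}-A\begin{bmatrix} m_w\\ w^*\\ h_{w^*}\end{bmatrix},\begin{bmatrix} 0\\ u^*-w^*\\ 0\end{bmatrix}\right\rangle .
\]
Add and subtract the $m$- and $h$-components of the pairing to write this as $\varpi-R$. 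Here $\varpi$ is the full monotonicity pairing of $A$ between the two triplets, which is nonnegative by Proposition~\ref{prop:SepAMono}. The remainder is
\[
R=\left\langle A_{u^*}\begin{bmatrix} m_u\\ h_{u^*}\end{bmatrix},\begin{bmatrix} m_u-m_w\\ h_{u^*}-h_{w^*}\end{bmatrix}\right\rangle+\left\langle A_{w^*}\begin{bmatrix} m_w\\ h_{w^*}\end{bmatrix},\begin{bmatrix} m_w-m_u\\ h_{w^*}-h_{u^*}\end{bmatrix}\right\rangle .
\]

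By Theorem~\ref{thm:Existence_of_m_and_h}, applied with $u$ replaced by $u^*$, the pair $(m_{u^*},h_{u^*})$ solves \eqref{eq:variationalIneqForSepA_u1}. Testing that inequality with the admissible nonnegative pair $(\mu,k)=(m_w,h_{w^*})$ shows the first term of $R$ is $\le 0$. The symmetric choice shows the second term is $\le 0$. Hence $R\le0$, and therefore $\langle\bar B\bar u-\bar B\bar w,\bar u-\bar w\rangle\ge\varpi\ge0$.

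Alternatively, Corollary~\ref{cor:FormulaFor_m_and_h} shows both $m$- and $h$-components of $A$ vanish pointwise at these triplets: $m_u(-H+g(m_u))=0$ and $h(-u^*+h^{\gamma'-1})=0$. Combined with the sign conditions $-H+g(m)\ge0$ and $-u^*+h^{\gamma'-1}\ge0$ and the nonnegativity of the other triplet, this makes each cross term sign-definite directly.

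The main point requiring care is bookkeeping rather than analysis. One must check that the pairings of $A$ are evaluated at the representative $u^*$, not at an arbitrary $u$, because the $h$-component $-u^*+h_{u^*}^{\gamma'-1}$ depends on the constant. One must also check that the choice of test representative is harmless, which holds because of the compatibility condition \eqref{eq:SepCompatibilityOfhu*} already shown in the well-definedness discussion. Finally, all terms must be finite so that adding and subtracting is legitimate; this follows from Proposition~\ref{prop:WellDefinedness}, since both triplets lie in $\cX^+$.
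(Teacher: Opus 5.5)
Your argument is correct, but it takes a different route from the paper. The paper expands $\langle \bar B\bar u-\bar B\bar v,\bar u-\bar v\rangle$ into an interior term and a boundary term and treats each with the explicit formulas of Corollary~\ref{cor:FormulaFor_m_and_h}:
\begin{itemize}
\item For the interior term, it writes $m_uD_pH(x,Du)=D_p\Phi(Du)$ with $\Phi(p)=G(H(x,p))$ and $G'=g^{-1}$. This $\Phi$ is convex, being a nondecreasing convex function composed with a convex one, so monotonicity of $D_p\Phi$ gives nonnegativity.
\item For the boundary term, it uses $\int_{\Gamma_D}(h_{u^*}-h_{v^*})\ds=0$ to replace $u-v$ by $u^*-v^*$, and then the monotonicity of $s\mapsto s_+^{\gamma-1}$.
\end{itemize}
You never use the explicit formulas. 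You treat $\bar B$ as the $u$-block of $A$ after eliminating $(m,h)$ through the partial variational inequality \eqref{eq:variationalIneqForSepA_u1}. You then show this elimination preserves monotonicity, because the cross terms $R$ are nonpositive once each partial inequality is tested with the other pair.

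The paper's route is more concrete. It exhibits the interior part of $\bar B$ as the gradient of the convex integrand $G(H(x,\cdot))$, in line with the variational derivation of Section~\ref{sec:MonoOpSepCase}.

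Your route is more structural and buys two things:
\begin{itemize}
\item It needs only Proposition~\ref{prop:SepAMono} and Theorem~\ref{thm:Existence_of_m_and_h}. It would therefore survive where $(m_u,h_u)$ is defined only implicitly, such as the non-separable setting mentioned in Section~\ref{sec:Conclusion}, provided the full operator is monotone there.
\item It gives the quantitative bound $\langle \bar B\bar u-\bar B\bar w,\bar u-\bar w\rangle\ge\varpi$. This keeps the strictly monotone contributions $\int_\Omega (g(m_u)-g(m_w))(m_u-m_w)\dx$ and $\int_{\Gamma_D}(h_{u^*}^{\gamma'-1}-h_{w^*}^{\gamma'-1})(h_{u^*}-h_{w^*})\ds$, which could be useful for uniqueness or stability.
\end{itemize}
Your handling of the additive constant is also slightly cleaner. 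Taking $u^*-w^*$ as the test representative folds the compatibility condition into the already verified well-definedness of $\bar B$, rather than invoking it a second time.
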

\begin{proof} 
    Let \(\bar{u}, \bar{v} \in \cQ\) be arbitrary. Let \( u \) and \( v \) denote arbitrary representatives, and let \(u^*\) and \( v^* \) denote their compatible representatives according to \eqref{eq:SepCompatibilityOfhu*}. 
    By the definition of \( \bar{B} \), we have
    \begin{align*}
        \langle \bar{B}\bar{u}-\bar{B}\bar{v}, \bar{u}-\bar{v} \rangle  
        &= \int_\Omega (m_{u} D_pH(x,Du) - m_{v}D_pH(x,Dv)) \cdot D(u-v) \dx \\
        &\phantom{={}} + \int_{\Gamma_D} (h_{u^*}-h_{v^*}) (u-v) \ds.
    \end{align*}
    For the first term, recall that $m_u = g^{-1}(H(x,Du))$. Let $G$ be the primitive of $g^{-1}$ such that $G(0)=0$. Since $g^{-1}$ is non-decreasing, $G$ is convex and non-decreasing. Since $H$ is convex, the composition $\Phi(p) := G(H(x,p))$ is convex. 
    Observing that $m_u D_pH(x,Du) = D_p\Phi(Du)$, the monotonicity of the gradient of the convex function $\Phi$ implies that the first integral is non-negative.
    For the boundary term, let \( c = (u-u^*)-(v-v^*) \). This is a constant. By the compatibility condition \eqref{eq:SepCompatibilityOfhu*}, we have \(\int_{\Gamma_D}(h_{u^*}-h_{v^*})\ds=0\), so
    \[
        \int_{\Gamma_D} (h_{u^*}-h_{v^*}) (u-v) \ds 
        = \int_{\Gamma_D} (h_{u^*}-h_{v^*}) (u^*-v^*) \ds + c \int_{\Gamma_D} (h_{u^*} - h_{v^*}) \ds
        = \int_{\Gamma_D} (h_{u^*}-h_{v^*}) (u^*-v^*) \ds.
    \]
    Since \( h_{w} = w_+^{\gamma-1} \) is non-decreasing in \( w \), the remaining integral is non-negative.
\end{proof}

The proof of hemicontinuity of the operator \( \bar{B} \) requires us to establish the following lemma on the continuity of the compatibility constant \( \kappa \), arising from Lemma~\ref{lem:choiceOfrepForbaru}.

\begin{lemma} \label{lem:HemicontoOfu*}
Suppose that Assumptions~\ref{assumset:D} hold.
Let \( \bar{u}_0, \bar{u}_1 \in \cQ \) be arbitrary, fix representatives \(u_0\in \bar u_0\) and \(u_1\in \bar u_1\), and set \(u_\th := (1-\th)u_0 + \th u_1\). Define \(\bar{u}_\th = (1-\th)\bar{u}_0 + \th \bar{u}_1\) for every \( \th \in [0,1] \).
Furthermore, let
    \[
        u^*_\th \quad \text{ and } \quad \kappa_{\th} := u_{\th}^* - u_{\th}
    \]
    respectively denote the compatible representative and the corresponding compatibility constant arising from Lemma~\ref{lem:choiceOfrepForbaru}.
    Then,
    \[
        \th \mapsto u_{\th}^*  \quad \text{and} \quad  \th \mapsto \kappa_{\th}
    \]
    are continuous for all \( \th \in [0,1] \).
\end{lemma}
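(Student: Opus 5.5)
The plan is to show first that $\th\mapsto\kappa_\th$ is continuous. Continuity of $u^*_\th = u_\th+\kappa_\th$ in $W^{1,\gamma}(\Omega)$ then follows at once, because $\th\mapsto u_\th$ is affine and hence continuous in $W^{1,\gamma}(\Omega)$. Introduce
\[
F(\th,\varsigma) := \int_{\Gamma_D} (u_\th+\varsigma)_+^{\gamma-1}\ds, \qquad (\th,\varsigma)\in[0,1]\times\R .
\]
By Lemma~\ref{lem:choiceOfrepForbaru}, $\kappa_\th$ is the unique root of $F(\th,\varsigma)=J:=\int_{\Gamma_N} j\ds>0$, and $F(\th,\cdot)$ is strictly increasing wherever it is positive.

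\textbf{Step 1: joint continuity of $F$.} On any set $[0,1]\times[-M,M]$ we have the pointwise bound $(u_\th+\varsigma)_+^{\gamma-1}\le (|u_0|+|u_1|+M)^{\gamma-1}$. By the trace theorem $u_0,u_1\in L^\gamma(\Gamma_D)$, and $\Gamma_D$ has finite measure, so this bound is in $L^1(\Gamma_D)$. The integrand is continuous in $(\th,\varsigma)$ pointwise a.e. on $\Gamma_D$, since the traces $Tu_\th=(1-\th)Tu_0+\th Tu_1$ are affine in $\th$. Dominated convergence therefore gives joint continuity of $F$.

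\textbf{Step 2: local boundedness of $\kappa_\th$.} I want $M$ with $|\kappa_\th|\le M$ for all $\th\in[0,1]$. For an upper bound, I would follow the argument in Lemma~\ref{lem:choiceOfrepForbaru} but make it uniform. Set $w:=|u_0|+|u_1|$ on $\Gamma_D$. Then
\[
F(\th,\varsigma)\ge(\varsigma/2)^{\gamma-1}\,\cH^{d-1}(\{w\le\varsigma/2\}),
\]
and the right-hand side is independent of $\th$ and tends to $\infty$. So there is $\varsigma_+$ with $F(\th,\varsigma_+)>J$ for all $\th$. For a lower bound, $F(\th,\varsigma)\le\int_{\Gamma_D}(w+\varsigma)_+^{\gamma-1}\ds\to0$ as $\varsigma\to-\infty$, so there is $\varsigma_-$ with $F(\th,\varsigma_-)<J$ for all $\th$. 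Monotonicity in $\varsigma$ then gives $\kappa_\th\in(\varsigma_-,\varsigma_+)$.

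\textbf{Step 3: continuity of $\kappa_\th$.} Take $\th_n\to\th_0$. The sequence $\kappa_{\th_n}$ is bounded, so pick any convergent subsequence $\kappa_{\th_{n_k}}\to\bar\kappa$. Joint continuity of $F$ gives $F(\th_0,\bar\kappa)=\lim_k F(\th_{n_k},\kappa_{\th_{n_k}})=J$. Uniqueness from Lemma~\ref{lem:choiceOfrepForbaru} then forces $\bar\kappa=\kappa_{\th_0}$. Since every subsequence has a further subsequence converging to $\kappa_{\th_0}$, the whole sequence converges and $\kappa_\th$ is continuous. It follows that
\[
\|u^*_\th-u^*_{\th_0}\|_{W^{1,\gamma}}\le|\th-\th_0|\,\|u_1-u_0\|_{W^{1,\gamma}}+|\kappa_\th-\kappa_{\th_0}|\,|\Omega|^{1/\gamma}\to0 .
\]

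The main obstacle is Step 2. The uniqueness argument only ever sees one $\th$ at a time, so the uniform bracketing of the root must be extracted from $\th$-independent majorants and minorants of $F$; the choice of the dominating function $w=|u_0|+|u_1|$ is what makes this possible. An alternative would be to use a quantitative strict monotonicity of $F(\th,\cdot)$ near the root, but the compactness argument above avoids needing any such bound.
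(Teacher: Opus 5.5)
Your proof is correct and follows essentially the same route as the paper. Both define $F(\th,\varsigma)=\int_{\Gamma_D}(u_\th+\varsigma)_+^{\gamma-1}\ds$, bracket the roots $\kappa_\th$ in a compact interval, and conclude by extracting a subsequence and invoking uniqueness of the root.

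The differences are minor. The paper brackets $\kappa_\th$ only locally near each $\th_0$, using continuity of $\th\mapsto F(\th,\pm M)$, whereas you bracket it uniformly on $[0,1]$ with the majorant $w=|u_0|+|u_1|$. The paper also proves only separate continuity of $F$ but then uses joint continuity in the limit step, so your Step~1 fills in a detail the paper glosses over.
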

\begin{proof}
Fix \(J:=\int_{\Gamma_N} j\,\ds>0\). Define
\[
\Phi(\varsigma,\th):=\int_{\Gamma_D}(u_\th+\varsigma)_+^{\gamma-1}\,\ds,
\qquad (\varsigma,\th)\in\R\times[0,1].
\]
For each fixed \(\varsigma\), the map \(\th\mapsto \Phi(\varsigma,\th)\) is continuous by the dominated convergence theorem, since \(u_\th(x)=(1-\th)u_0(x)+\th u_1(x)\) converges pointwise and
\((u_\th+\varsigma)_+^{\gamma-1}\le C\bigl(|u_0|+|u_1|+|\varsigma|\bigr)^{\gamma-1}\in L^1(\Gamma_D)\).
For each fixed \(\th\), the map \(\varsigma\mapsto \Phi(\varsigma,\th)\) is continuous and nondecreasing, with
\(\lim_{\varsigma\to-\infty}\Phi(\varsigma,\th)=0\) and \(\lim_{\varsigma\to+\infty}\Phi(\varsigma,\th)=+\infty\); hence, by Lemma~\ref{lem:choiceOfrepForbaru}, there exists a unique \(\kappa_\th\in\R\) such that \(\Phi(\kappa_\th,\th)=J\).

To prove continuity of \(\th\mapsto \kappa_\th\), fix \(\th_0\in[0,1]\).
Choose \(M>0\) such that \(\Phi(-M,\th_0)<J<\Phi(M,\th_0)\). By continuity of \(\th\mapsto\Phi(\pm M,\th)\), the same inequalities hold for all \(\th\) sufficiently close to \(\th_0\); hence \(\kappa_\th\in[-M,M]\) locally.
If \(\th_n\to\th_0\), extract a subsequence (not relabeled) with \(\kappa_{\th_n}\to \kappa_*\in[-M,M]\). Then, by continuity of \(\Phi\),
\[
J=\Phi(\kappa_{\th_n},\th_n)\to \Phi(\kappa_*,\th_0).
\]
By uniqueness at \(\th_0\), \(\kappa_*=\kappa_{\th_0}\). Therefore \(\kappa_{\th_n}\to\kappa_{\th_0}\), and \(\th\mapsto \kappa_\th\) is continuous. Finally, \(u_\th^*=u_\th+\kappa_\th\) is continuous as a sum of continuous maps.
\end{proof}

The following result is a direct corollary of Lemma~\ref{lem:HemicontoOfu*}.
\begin{corollary}\label{cor:corOfHemicontoOfu*}
Suppose that Assumptions~\ref{assumset:D} hold. Furthermore, let \(\bar{u}_0, \bar{u}_1 \in \cQ\) be arbitrary, and let \(\kappa_{\th}\) denote the compatibility constant defined in Lemma~\ref{lem:HemicontoOfu*}. Then, there exists a constant \( C > 0 \), independent of \(\th \in [0,1]\), such that
    \[
        |\kappa_{\th}| \leq C.
    \]
\end{corollary}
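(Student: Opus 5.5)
The bound follows from Lemma~\ref{lem:HemicontoOfu*}, which shows that $\th\mapsto\kappa_\th$ is continuous on the compact interval $[0,1]$. A continuous real-valued function on a compact set is bounded, so we may take $C:=\max_{\th\in[0,1]}|\kappa_\th|<\infty$. This constant depends only on the fixed representatives $u_0,u_1$ (and on $j$ and $\Gamma_D$), not on $\th$.

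The continuity argument quietly relies on a locally chosen $M$, so it is worth giving a direct, quantitative bound as a check; I would include it as a remark. Set $J:=\int_{\Gamma_N}j\,\ds>0$ and $\Phi(\varsigma,\th)$ as in the proof of Lemma~\ref{lem:HemicontoOfu*}. Pointwise on $\Gamma_D$ we have
\[
-(|u_0|+|u_1|)\le u_\th\le |u_0|+|u_1| .
\]

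\textbf{Upper bound.} For $\varsigma>0$, monotonicity in $u$ gives
\[
\Phi(\varsigma,\th)\ge\int_{\Gamma_D}\bigl(\varsigma-|u_0|-|u_1|\bigr)_+^{\gamma-1}\ds=:\Psi_-(\varsigma),
\]
and $\Psi_-$ is independent of $\th$. By the Chebyshev argument of Lemma~\ref{lem:choiceOfrepForbaru}, $\Psi_-(\varsigma)\to\infty$ as $\varsigma\to\infty$. Choose $M_+$ with $\Psi_-(M_+)>J$. Then $\Phi(M_+,\th)>J$ for every $\th$, and since $\Phi$ is nondecreasing in $\varsigma$, $\kappa_\th<M_+$.

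\textbf{Lower bound.} Similarly,
\[
\Phi(\varsigma,\th)\le\int_{\Gamma_D}\bigl(\varsigma+|u_0|+|u_1|\bigr)_+^{\gamma-1}\ds=:\Psi_+(\varsigma),
\]
and $\Psi_+(\varsigma)\to0$ as $\varsigma\to-\infty$ by dominated convergence. Choose $M_-$ with $\Psi_+(-M_-)<J$. Then $\kappa_\th>-M_-$ for all $\th$.

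Taking $C:=\max(M_+,M_-)$ gives $|\kappa_\th|\le C$ uniformly in $\th\in[0,1]$. There is no real obstacle here; the only point to be careful about is that the dominating functions $\Psi_\pm$ must not depend on $\th$, and the uniform pointwise envelope $|u_\th|\le|u_0|+|u_1|$ guarantees exactly that.
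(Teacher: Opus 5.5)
Your main argument is the same as the paper's: Lemma~\ref{lem:HemicontoOfu*} gives continuity of $\vartheta\mapsto\kappa_\vartheta$ on the compact interval $[0,1]$, so $\kappa_\vartheta$ is bounded. Your supplementary envelope argument with $\Psi_\pm$, built from $|u_\vartheta|\le|u_0|+|u_1|$, is also correct, and it gives a $\vartheta$-uniform bracket $-M_-<\kappa_\vartheta<M_+$ directly, without using continuity.
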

\begin{proof}
    Due to Lemma~\ref{lem:HemicontoOfu*}, \( \th \mapsto \kappa_{\th} \) is a continuous map on a compact set; hence, it is bounded.
\end{proof}

With the continuity of the compatibility constant established, we now prove the hemicontinuity of the operator \(\bar{B}\).
\begin{proposition} \label{prop:hemicontofSepbarB}
    Suppose that Assumptions~\ref{assumset:D} and \ref{assumset:S} hold. Then, the operator \(\bar{B}\) is hemicontinuous. More specifically, let \( \bar{u}_0, \bar{u}_1, \bar{v} \in \cQ \) be fixed. Then, the map
    \[
        \th \mapsto \left\langle \bar{B}[(1-\th)\bar{u}_0 + \th \bar{u}_1]  \, ,\,  \bar{v}  \right\rangle
    \]
    is continuous for all \( \th \in [0,1] \).
\end{proposition}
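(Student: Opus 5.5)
We fix representatives \(u_0\in\bar u_0\), \(u_1\in\bar u_1\), \(v\in\bar v\), and set \(u_\th=(1-\th)u_0+\th u_1\), which represents \(\bar u_\th\). By Lemma~\ref{lem:HemicontoOfu*} the compatible representative is \(u_\th^*=u_\th+\kappa_\th\), with \(\th\mapsto\kappa_\th\) continuous and bounded by Corollary~\ref{cor:corOfHemicontoOfu*}. By Corollary~\ref{cor:FormulaFor_m_and_h} we have \(m_{u_\th}=g^{-1}(H(x,Du_\th))\) and \(h_{u_\th^*}=(u_\th+\kappa_\th)_+^{\gamma-1}\). Expanding the definition \eqref{eq:defOfsepbarB} then gives
\[
f(\th):=\langle \bar B\bar u_\th,\bar v\rangle=\int_\Omega g^{-1}(H(x,Du_\th))\,D_pH(x,Du_\th)\cdot Dv\dx-\int_{\Gamma_N}jv\ds+\int_{\Gamma_D}(u_\th+\kappa_\th)_+^{\gamma-1}v\ds .
\]
The middle term does not depend on \(\th\). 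We will show that the other two terms are continuous in \(\th\) by dominated convergence, fixing \(\th_0\) and a sequence \(\th\to\th_0\).

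\textbf{Interior term.} Pointwise convergence of the integrand holds for a.e.\ \(x\). Indeed, \(Du_\th(x)\to Du_{\th_0}(x)\), \(H(x,\cdot)\) and \(D_pH(x,\cdot)\) are continuous by Assumption~\ref{assume:S.0}, and \(g^{-1}\) is continuous. The continuity of \(g^{-1}\) follows because \(g\) is continuous and strictly increasing, so \(g^{-1}\) is continuous on \([g(0),\infty)\), and it equals \(0\) below \(g(0)\), matching \(g^{-1}(g(0))=0\) at the junction.

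For domination we use Lemma~\ref{lem:boundOng^-1DpH1} with \(p_i=Du_i\). It gives
\[
\bigl|g^{-1}(H(x,Du_\th))D_pH(x,Du_\th)\cdot Dv\bigr|\le C\bigl(|Du_0|^{\gamma-1}+|Du_1|^{\gamma-1}+1\bigr)|Dv|.
\]
By Young's inequality with exponents \(\gamma'\) and \(\gamma\), the right-hand side is bounded by \(C(|Du_0|^\gamma+|Du_1|^\gamma+|Dv|^\gamma+1)\in L^1(\Omega)\), uniformly in \(\th\).

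\textbf{Boundary term.} Pointwise convergence on \(\Gamma_D\) follows from the continuity of \(\kappa_\th\) and of \(s\mapsto s_+^{\gamma-1}\). For domination we use \(|\kappa_\th|\le C\), which gives
\[
(u_\th+\kappa_\th)_+^{\gamma-1}|v|\le C\bigl(|u_0|+|u_1|+C\bigr)^{\gamma-1}|v|.
\]
By the trace theorem \(u_0,u_1,v\in L^\gamma(\Gamma_D)\), so Hölder's inequality with \((\gamma',\gamma)\) shows the bound is in \(L^1(\Gamma_D)\).

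The only real obstacle is the boundary term, where the \(\th\)-dependence enters nonlinearly through the implicitly defined constant \(\kappa_\th\). This is exactly what Lemma~\ref{lem:HemicontoOfu*} and Corollary~\ref{cor:corOfHemicontoOfu*} handle, so the dominated convergence theorem applies to both terms and yields \(f(\th)\to f(\th_0)\).
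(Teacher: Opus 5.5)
Your proposal is correct and takes essentially the same approach as the paper. The paper also expands the pairing, dominates the interior flux via Lemma~\ref{lem:boundOng^-1DpH1} and the boundary term via the bound on $\kappa_\theta$ from Corollary~\ref{cor:corOfHemicontoOfu*}, and concludes by dominated convergence; you additionally spell out the continuity of $g^{-1}$ and the Young/H\"older steps that the paper leaves implicit.
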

\begin{proof}
    Let \(\bar{u}_0, \bar{u}_1 \in \cQ\) and define
    \[
        \bar{u}_\th = (1-\th)\bar{u}_0 + \th \bar{u}_1 \quad \text{for } \th \in [0,1],
    \]
    and let \( u^*_\th \) and \( \kappa_{\th} := u_{\th}^* - u_{\th} \) be defined as before. 
    Fix \(\bar v\in\cQ\), and let \(v\in W^{1,\gamma}(\Omega)\) be an arbitrary representative of \(\bar v\).
Define the map
    \[
    \varpi(\th):=  \left\langle \bar{B}\bar{u}_\th  \, ,\,  \bar{v}  \right\rangle .
    \]
    Due to Corollary~\ref{cor:FormulaFor_m_and_h} and Lemma~\ref{lem:boundOng^-1DpH1}, we have
    \[
        \begin{array}{>{\displaystyle}r>{\displaystyle}r>{\displaystyle}l}
            m_{u_{\th}} \left|D_pH(x,Du_{\th}(x))\right|
            = & G'(H(x,Du_{\th}(x))) \left|D_pH(x,Du_{\th})\right|

            \\[.25em]

        & \leq C \left( |Du_1|^{\gamma-1} +  |Du_0|^{\gamma-1}  +1 \right)
        &\in L^{\gamma'}(\Omega)
        \end{array}
    \]
    for all \( \th \in [0,1] \). In addition, Corollary~\ref{cor:corOfHemicontoOfu*} and some basic inequalities give us that
    \[
        \begin{split}
        h_{u_{\th}^*} &=  (u_{\th}+\kappa_\th)_+^{\gamma-1} \leq C (  (u_0)_+^{\gamma-1} + (u_1)_+^{\gamma-1} + 1)
        \end{split}
    \]
    for some \( C > 1 \), independent of \( \th \). 
    Expanding the definition of \( \bar{B} \), we have
    \[
    \varpi(\th)=\int_\Omega m_{u_{\th}} D_pH(x,Du_{\th}) \cdot Dv \dx -\int_{\Gamma_N}  j v \ds +\int_{\Gamma_D}  h_{u_{\th}^*} v \ds.
    \] 
Pointwise continuity (a.e.\ in \(\Omega\) and on \(\Gamma_D\)) in \(\th\) follows from the continuity of \(p\mapsto H(x,p)\), \(p\mapsto D_pH(x,p)\), and of \(g^{-1}\), together with Lemma~\ref{lem:HemicontoOfu*}.    
The uniform bounds above provide integrable majorants, so the dominated convergence theorem yields the continuity of \(\varpi\) on \([0,1]\).
\end{proof}

The last result in this subsection is the coercivity of the operator \( \bar{B} \), which we establish in the following proposition.
\begin{proposition} \label{prop:coercivityofSepbarB}
    Suppose that Assumptions~\ref{assumset:D} and \ref{assumset:S} hold. Then,
    \begin{equation*}
        \frac{1}{\|\bar{u}\|_{\cQ}}\langle \bar{B}\bar{u}-\bar{B}\bar{0}, \bar{u} \rangle \to \infty \quad \text{ as } \quad \|\bar{u}\|_{\cQ} \to \infty.
    \end{equation*}
\end{proposition}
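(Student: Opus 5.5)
The plan is to expand the pairing explicitly, discard the boundary contribution by a monotonicity argument that uses the compatibility normalization, and bound the interior flux term from below by \(c\|Du\|_{L^\gamma}^\gamma\) minus lower-order terms.

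\textbf{Expanding the pairing.} Fix \(\bar u\in\cQ\) with representative \(u\), and let \(u^*=u+\kappa\) be its compatible representative from Lemma~\ref{lem:choiceOfrepForbaru}. For \(\bar 0\), the compatible representative is the constant \(0^*=\kappa_0\), and \(m_0=g^{-1}(H(x,0))\) by Corollary~\ref{cor:FormulaFor_m_and_h}. By \eqref{eq:defOfsepbarB}, the \(j\)-terms cancel in the difference, and we get
\[
\langle \bar B\bar u-\bar B\bar 0,\bar u\rangle=\int_\Omega m_u D_pH(x,Du)\cdot Du\dx-\int_\Omega m_0 D_pH(x,0)\cdot Du\dx+\int_{\Gamma_D}(h_{u^*}-h_{0^*})\,u\ds .
\]

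\textbf{The boundary term is nonnegative.} I will handle it exactly as in the proof of Proposition~\ref{prop:monotonicityofSepbarB}. By \eqref{eq:SepCompatibilityOfhu*}, both \(h_{u^*}\) and \(h_{0^*}\) have integral \(\int_{\Gamma_N}j\ds\). Since \(u\) and \(u^*-0^*\) differ by a constant, we may replace \(u\) by \(u^*-0^*\) in the last integral. Monotonicity of \(w\mapsto w_+^{\gamma-1}\) then shows this term is \(\ge 0\), so it can be dropped.

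\textbf{The reference-state term is lower order.} By Assumption~\ref{assume:S.4}, \(H(x,0)\le C\) and \(|D_pH(x,0)|\le C\). Lemma~\ref{lem:boundOnginverse1} then gives \(0\le m_0\le C\). Hence, using H\"older's inequality on the bounded domain \(\Omega\),
\[
\Bigl|\int_\Omega m_0D_pH(x,0)\cdot Du\dx\Bigr|\le C\|Du\|_{L^1(\Omega)}\le C\|\bar u\|_{\cQ}.
\]

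\textbf{The main step: a pointwise lower bound.} I will prove that, for a.e.\ \(x\) and all \(p\),
\[
g^{-1}(H(x,p))\,D_pH(x,p)\cdot p\ \ge\ c|p|^\gamma-C .
\]
The difficulty is that \(D_pH\cdot p\) may be negative for small \(p\), so the two factors cannot simply be multiplied. I will split into two regimes.

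\emph{Large \(|p|\).} Choose \(R\) so that for \(|p|\ge R\), both lower bounds are positive and at least half their leading term:
\[
C^{-1}|p|^{\alpha/\beta}-C\ge \tfrac12 C^{-1}|p|^{\alpha/\beta}\quad\text{(Corollary~\ref{cor:boundOng^-1(H)1})},\qquad C^{-1}|p|^\alpha-C\ge \tfrac12C^{-1}|p|^\alpha\quad\text{(Assumption~\ref{assume:S.5}(b))}.
\]
Multiplying the two bounds gives \(c|p|^{\alpha/\beta+\alpha}=c|p|^\gamma\), using \(\gamma=\alpha(\beta+1)/\beta\).

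\emph{Small \(|p|\).} For \(|p|\le R\), we have \(g^{-1}\ge0\), \(g^{-1}(H(x,p))\le C_R\) by Corollary~\ref{cor:boundOng^-1(H)1}, and \(D_pH(x,p)\cdot p\ge -C\). So the product is \(\ge -C_R\ge c|p|^\gamma-C'_R\).

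\textbf{Conclusion.} Integrating the pointwise bound and combining the three steps gives
\[
\langle \bar B\bar u-\bar B\bar 0,\bar u\rangle\ge c\|\bar u\|_{\cQ}^\gamma-C\|\bar u\|_{\cQ}-C .
\]
Dividing by \(\|\bar u\|_{\cQ}\) yields a quantity bounded below by \(c\|\bar u\|_{\cQ}^{\gamma-1}-C-C/\|\bar u\|_{\cQ}\). This tends to \(+\infty\) because \(\gamma>1\), which proves the proposition.
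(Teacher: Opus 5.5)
Your proof is correct and follows the paper's argument. You expand the pairing, show the boundary term is nonnegative using the compatibility normalization and the monotonicity of $w\mapsto w_+^{\gamma-1}$, treat the $m_0D_pH(x,0)$ term as lower order, and use the pointwise bound $g^{-1}(H(x,p))\,D_pH(x,p)\cdot p\ge c|p|^\gamma-C$. Your large/small $|p|$ split actually justifies the step where the paper simply multiplies two lower bounds that may be negative, and your linear $L^\infty$ estimate on the reference term is an equivalent substitute for the paper's H\"older--Young bound.
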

\begin{proof}
 	First, note that
 	\[
 	 	m_{0} = g^{-1}(H(x,0)), \quad 0^* = \left( \frac{1}{|\Gamma_D|} \int_{\Gamma_N} j \ds\right)^{\gamma'-1}, \enskip  \text{ and } \enskip h_{0^*} = \frac{1}{|\Gamma_D|} \int_{\Gamma_N} j\ds.
 	\]
 Furthermore,
\begin{align*}
 	\langle \bar{B}\bar{u}-\bar{B}\bar{0}, \bar{u} \rangle
 	&= \int_\Omega \left( m_{u} D_pH(x,Du) - m_{0} D_pH(x,0) \right) \cdot Du \dx \\
 	&\phantom{={}} + \int_{\Gamma_D} (h_{u^*}-h_{0^*}) u \ds.
\end{align*}
Due to Corollary~\ref{cor:boundOng^-1(H)1} and Assumption~\ref{assume:S.5}, we have the lower bounds
\[
m_{u}=g^{-1}(H(x,Du)) \geq C^{-1}|Du|^{\alpha/\beta}-C,
\qquad
D_pH(x,Du)\cdot Du \geq C^{-1}|Du|^{\alpha}-C.
\]
Consequently,
\[
m_{u}D_pH(x,Du)\cdot Du \geq \frac{1}{C}|Du|^{\gamma}-C,
\qquad
\gamma=\alpha+\frac{\alpha}{\beta}.
\]
Moreover, since \(m_{0}D_pH(x,0)\in L^{\gamma'}(\Omega;\R^d)\), H\"older and Young yield
\[
\left|\int_\Omega m_{0}D_pH(x,0)\cdot Du\,\dx\right|
\le \|m_{0}D_pH(x,0)\|_{L^{\gamma'}(\Omega)}\|Du\|_{L^\gamma(\Omega)}
\le \frac{1}{2C}\|Du\|_{L^\gamma(\Omega)}^\gamma + C.
\]
Since $\int_{\Gamma_D}h_{u^*}\,ds=\int_{\Gamma_N}j\,ds=\int_{\Gamma_D}h_{0^*}\,ds$,
we have $\int_{\Gamma_D}(h_{u^*}-h_{0^*})\,ds=0$. Hence
\[
\int_{\Gamma_D}(h_{u^*}-h_{0^*})u\,ds
=\int_{\Gamma_D}(h_{u^*}-h_{0^*})(u^*-0^*)\,ds.
\]
Because $h_w=w_+^{\gamma-1}$ and $s\mapsto s_+^{\gamma-1}$ is monotone,
the integrand is nonnegative a.e., and therefore the integral is $\ge0$.

Combining these estimates and using \(\|\bar u\|_{\cQ}=\|Du\|_{L^\gamma(\Omega)}\), we obtain
\[
\langle \bar{B}\bar{u}-\bar{B}\bar{0}, \bar{u} \rangle
\ge \frac{1}{C}\|\bar{u}\|_{\cQ}^{\gamma}-C.
\]
 	We conclude the proof by dividing both sides by \( \|\bar{u}\|_{\cQ} \), and taking the limit \( \|\bar{u}\|_{\cQ} \to \infty \).
\end{proof}

\subsection{The existence of solutions for the penalized problem} \label{subsec:existenceOfSolSepBandSepA}

In this subsection, we establish the existence of solutions for the modified operator \( \bar{B} \) and subsequently prove the existence of solutions for the original operator \( A \).

\begin{theorem}\label{thm:ExistenceOfSolutionsForSepbarB}
Suppose that Assumptions~\ref{assumset:D} and \ref{assumset:S} hold. Then, there exists a $\bar{u} \in \cQ$ such that
    \begin{equation*}
        \langle \bar{B}\bar{u}, \bar{v}-\bar{u} \rangle \geq 0
    \end{equation*}
    for all $\bar{v} \in \cQ$.
\end{theorem}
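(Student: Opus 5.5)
The plan is to apply the unbounded Browder--Minty theorem, Lemma~\ref{lem:UnboundedMintyExistence}, with $\cX=\cQ$ and $\mathcal K=\cQ$. The space $\cQ$ is a linear space, hence nonempty, closed and convex. By Lemma~\ref{lem:PropertiesOfcQ} it is a reflexive Banach space, and we equip it with the equivalent norm $\|\bar u\|_{\cQ}=\|Du\|_{L^\gamma(\Omega)}$. Before invoking the lemma, we note that $\bar B$ maps $\cQ$ into $\cQ'$. Indeed, for fixed $\bar u$ the functional $v\mapsto \int_\Omega m_uD_pH(x,Du)\cdot Dv\dx-\int_{\Gamma_N}jv\ds+\int_{\Gamma_D}h_{u^*}v\ds$ is bounded on $W^{1,\gamma}(\Omega)$. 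This follows from Lemma~\ref{lem:boundOng^-1DpH1} (so that $m_uD_pH(x,Du)\in L^{\gamma'}$), from $h_{u^*}=(u^*)_+^{\gamma-1}\in L^{\gamma'}(\Gamma_D)$, from $j\in L^{\gamma'}(\Gamma_N)$, and from the trace theorem. The functional also annihilates constants by the compatibility condition \eqref{eq:SepCompatibilityOfhu*}. By Lemma~\ref{lem:PropertiesOfcQ}\,\ref{Sublem:PropertiesOfcQ1} it therefore defines an element of $\cQ'$. Boundedness with respect to the quotient norm follows by choosing the representative $v-\avg_\Omega v$ and applying the Poincar\'e and trace inequalities.

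Next, we check the three hypotheses of Lemma~\ref{lem:UnboundedMintyExistence}.
\begin{enumerate}[label=(\roman*)]
\item Monotonicity is Proposition~\ref{prop:monotonicityofSepbarB}.
\item Continuity on finite-dimensional subspaces follows from hemicontinuity (Proposition~\ref{prop:hemicontofSepbarB}) together with monotonicity, via the equivalence recalled in the remark after Definition~\ref{def:continuity}.
\item Coercivity in the sense of Definition~\ref{def:coercivity} holds with $\varphi=\bar 0$, by Proposition~\ref{prop:coercivityofSepbarB}.
\end{enumerate}
In (iii) there is one small point to handle. Definition~\ref{def:coercivity} divides by $\|\bar u-\bar 0\|$, and this is exactly $\|\bar u\|_{\cQ}$. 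Since we may use the norm $\|Du\|_{L^\gamma}$ in place of the quotient norm, the two notions of ``$\|\bar u\|\to\infty$'' agree up to the constants in \eqref{eq:quitentPonicareIneq}. Lemma~\ref{lem:UnboundedMintyExistence} then yields $\bar u\in\cQ$ with $\langle\bar B\bar u,\bar v-\bar u\rangle\ge 0$ for all $\bar v\in\cQ$.

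The main obstacle is the hemicontinuity-to-finite-dimensional-continuity step. If one prefers not to rely on the cited equivalence, continuity on a finite-dimensional subspace $\mathcal M$ can be checked directly. Take $\bar u_n\to\bar u$ in $\mathcal M$ and fix representatives with zero mean, so that $u_n\to u$ in $W^{1,\gamma}$. The argument of Lemma~\ref{lem:HemicontoOfu*} applies verbatim with $\th$ replaced by $n$ and gives $\kappa_n\to\kappa$: one uses continuity of $\Phi(\varsigma,\cdot)$ along the sequence after passing to an a.e.\ convergent subsequence with an $L^\gamma$ majorant. Then $m_{u_n}D_pH(x,Du_n)\to m_uD_pH(x,Du)$ in $L^{\gamma'}$ by the growth bound of Lemma~\ref{lem:boundOng^-1DpH1} and generalized dominated convergence (Vitali). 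Similarly, $(u_n+\kappa_n)_+^{\gamma-1}\to(u+\kappa)_+^{\gamma-1}$ in $L^{\gamma'}(\Gamma_D)$. This gives weak (indeed strong) convergence $\bar B\bar u_n\to\bar B\bar u$ in $\cQ'$.
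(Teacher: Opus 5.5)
Your proposal is correct and matches the paper's proof. The paper also obtains the result as a direct application of Lemma~\ref{lem:UnboundedMintyExistence} on the linear space $\cQ$, using Propositions~\ref{prop:monotonicityofSepbarB}, \ref{prop:hemicontofSepbarB}, and \ref{prop:coercivityofSepbarB} (coercivity with $\varphi=\bar 0$). Your extra verification that $\bar B$ maps into $\cQ'$, and your direct argument for continuity on finite-dimensional subspaces, are sound additions that the paper leaves implicit. The paper additionally notes that testing with $\bar v=\bar u\pm\bar w$ gives $\langle \bar B\bar u,\bar w\rangle=0$ for all $\bar w\in\cQ$, which it uses later.
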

\begin{proof}
    Due to Propositions~\ref{prop:monotonicityofSepbarB},~\ref{prop:hemicontofSepbarB}, and~\ref{prop:coercivityofSepbarB}, the theorem is a direct consequence of 
    Lemma~\ref{lem:UnboundedMintyExistence}. 
    Moreover, since the domain \( \cQ \) is a vector space, we can choose \( \bar{v} = \bar{u} \pm \bar{w} \) to conclude that
\begin{equation} \label{eq:BarB_equality}
\langle \bar{B}\bar{u}, \bar{w} \rangle = 0 \quad \text{for all } \bar{w} \in \cQ.
    \end{equation}
\end{proof}

We now establish the existence of solutions for the original operator $A$.
\begin{theorem}\label{thm:ExistenceOfSolutionsForSepA}
    Suppose that Assumptions~\ref{assumset:D} and \ref{assumset:S} hold. Then, there exists a triplet $(m,u,h) \in \cX^+$ such that
    \begin{equation*}
	\left\langle A \begin{bmatrix} m \\ u  \\ h \end{bmatrix}\, , \, \begin{bmatrix} \mu-m \\ v-u \\ k-h\end{bmatrix}  \right \rangle  \geq 0
    \end{equation*}
    for all $(\mu,v,k) \in \cX^+$.
\end{theorem}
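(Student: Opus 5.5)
We take the solution $\bar u\in\cQ$ from Theorem~\ref{thm:ExistenceOfSolutionsForSepbarB}, fix any representative $u$, and pass to the compatible representative $u^*=u+\kappa$ from Lemma~\ref{lem:choiceOfrepForbaru}. Our candidate is the triplet
\[
(m,u^*,h):=(m_{u^*},u^*,h_{u^*}),
\]
with $m_{u^*}=m_u=g^{-1}(H(x,Du))$ and $h_{u^*}=(u^*)_+^{\gamma-1}$ as in Corollary~\ref{cor:FormulaFor_m_and_h}. This triplet lies in $\cX^+$: we have $m\ge0$ and $m\in L^{\beta+1}(\Omega)$ by Corollary~\ref{cor:boundOng^-1(H)1}, since $|Du|^{\alpha(\beta+1)/\beta}=|Du|^\gamma$. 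Likewise $h\ge0$ and $h\in L^{\gamma'}(\Gamma_D)$, because $(\gamma-1)\gamma'=\gamma$ and the trace $u^*|_{\Gamma_D}\in L^\gamma(\Gamma_D)$.

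Next we split the pairing for an arbitrary $(\mu,v,k)\in\cX^+$ by linearity into three pieces:
\[
\Big\langle A(m,u^*,h),(\mu-m,0,k-h)\Big\rangle+\Big\langle A(m,u^*,h),(0,v-u^*,0)\Big\rangle .
\]
The first piece equals $\langle A_{u^*}(m_{u^*},h_{u^*}),(\mu-m,k-h)\rangle$, which is $\ge0$ by Theorem~\ref{thm:Existence_of_m_and_h}, since $(m_{u^*},h_{u^*})$ is the unique solution of \eqref{eq:variationalIneqForSepA_u1} with $u^*$ fixed. One point needs care here: Theorem~\ref{thm:Existence_of_m_and_h} must be applied with the representative $u^*$, not $u$, because the $h$-component depends on the trace of $u$ itself and not only on $Du$. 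With $u^*$ the characterization in Corollary~\ref{cor:FormulaFor_m_and_h} identifies $h_{u^*}=(u^*)_+^{\gamma-1}$, which matches the $h$ used in $\bar B$. The $m$-component is unaffected by the shift.

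The second piece is, by definition \eqref{eq:defOfsepbarB}, equal to $\langle\bar B\bar u,\overline{v-u^*}\rangle$. This definition is representative-independent precisely because of the compatibility condition \eqref{eq:SepCompatibilityOfhu*}. By \eqref{eq:BarB_equality} this second piece vanishes. Adding the two pieces gives the variational inequality for $A=A_1$.

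Finally, for general $\epsilon>0$ we repeat the same argument with $A_\epsilon$ in place of $A$. The formula for $m$ is unchanged, and $h_u=u_+^{\gamma-1}/\epsilon^\gamma$ by Remark~\ref{rmk:ForCor:FormulaFor_m_and_h}. The constant $\kappa$ is then chosen by the analogue of Lemma~\ref{lem:choiceOfrepForbaru} with the extra factor $\epsilon^{-\gamma}$, whose proof is identical. Monotonicity, hemicontinuity, and coercivity of the corresponding $\bar B$ hold verbatim, as noted at \eqref{eq:TheFullSepOpA}; this yields Theorem~\ref{thm:ExistenceOfSolutionsForSepA_ep_Intro}.

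The only genuinely delicate point is the consistency of representatives just described: $\bar B$ must be evaluated at the same triplet that solves the $(m,h)$-subproblem, and the test direction $v-u^*$ must be allowed to carry an arbitrary constant. Both are handled by using $u^*$ throughout together with the flux balance $\int_{\Gamma_D}h_{u^*}\,ds=\int_{\Gamma_N}j\,ds$.
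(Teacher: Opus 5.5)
Your proposal is correct and follows essentially the same route as the paper. Both take the solution $\bar u$ of the reduced problem for $\bar B$, work with the compatible representative $u^*$ (the paper simply calls it $u$), and set $(m,h)=(m_{u^*},h_{u^*})$. Both then split the pairing into the $(m,h)$-part, which is nonnegative by Theorem~\ref{thm:Existence_of_m_and_h}, and the $u$-part, which equals $\langle \bar B\bar u,\bar v-\bar u\rangle=0$ by \eqref{eq:BarB_equality}. Your explicit check that the triplet lies in $\cX^+$ and your remark on using the same representative throughout are details the paper leaves implicit.
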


\begin{proof}
    We construct the solution based on Theorem~\ref{thm:Existence_of_m_and_h}, Theorem~\ref{thm:ExistenceOfSolutionsForSepbarB}, and Corollary~\ref{cor:FormulaFor_m_and_h}.

    Let $\bar{u}$ be the solution for $\bar{B}$, arising from Theorem~\ref{thm:ExistenceOfSolutionsForSepbarB}, and let $u \in \bar{u}$ be the member satisfying
    \begin{equation*}
        \int_{\Gamma_D} u_+^{\gamma-1}\ds = \int_{\Gamma_N} j \ds,
    \end{equation*}
which exists due to Lemma~\ref{lem:choiceOfrepForbaru}. Let $m = m_{u}$ and $h = h_{u}$ be the functions arising from Corollary~\ref{cor:FormulaFor_m_and_h}. Then, for any $(\mu,v,k) \in \cX^+$, we define $\bar{v}$ as the equivalence class of $v$ in $\mathcal{Q}$. Using \eqref{eq:BarB_equality} and the definition of $A_u$, we obtain
    \begin{equation*}
    \begin{split}
    \left\langle A \begin{bmatrix} m \\ u  \\ h \end{bmatrix}\, , \, \begin{bmatrix} \mu-m \\ v-u \\ k-h\end{bmatrix}  \right \rangle
    &=  \left\langle A \begin{bmatrix} m_u \\ u  \\ h_u \end{bmatrix}\, , \, \begin{bmatrix} \mu-m \\ 0 \\ k-h\end{bmatrix}  \right \rangle
    + \left\langle A \begin{bmatrix} m_u \\ u  \\ h_u \end{bmatrix}\, , \, \begin{bmatrix} 0\\ v-u \\ 0\end{bmatrix}  \right \rangle \\[1.45em]
    &= \left\langle A_u \begin{bmatrix} m_u \\ h_u \end{bmatrix}\, , \, \begin{bmatrix} \mu-m  \\ k-h\end{bmatrix}  \right \rangle
    + \left\langle \bar{B} \bar{u}\, , \, \bar{v} -\bar{u} \right \rangle.
    \end{split}
    \end{equation*}
    The first term is non-negative by Theorem~\ref{thm:Existence_of_m_and_h}, and the second term is zero by \eqref{eq:BarB_equality}. Thus, the sum is non-negative.
\end{proof}

The general case of $A_\epsilon$ for $\epsilon >0$ is given by the following theorem.
\begin{theorem}[\bfseries Existence of solutions for $A_{\epsilon}$] \label{thm:ExistenceOfSolutionsForSepA_ep}
    Suppose that Assumptions~\ref{assumset:D} and \ref{assumset:S} hold. Then, there exists a triplet $(m,u,h) \in \cX^+$ such that
    \begin{equation*}
	\left\langle A_{\epsilon} \begin{bmatrix} m \\ u  \\ h \end{bmatrix}\, , \, \begin{bmatrix} \mu-m \\ v-u \\ k-h\end{bmatrix}  \right \rangle  \geq 0.
    \end{equation*}
    for all $(\mu,v,k) \in \cX^+$.
\end{theorem}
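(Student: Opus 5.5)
The plan is to repeat the argument of Section~\ref{sec:SepExiste} with $\epsilon$ carried along, or, more economically, to reduce to the case $\epsilon=1$ by rescaling. Throughout, only the third component of $A_\epsilon$ depends on $\epsilon$, through the term $\epsilon^{\gamma'}h^{\gamma'-1}$.

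\emph{Option 1: rerun the argument with general $\epsilon$.}
\begin{enumerate}[label=\roman*.]
\item Monotonicity and hemicontinuity of $A_\epsilon$ follow exactly as in Propositions~\ref{prop:SepAMono} and~\ref{prop:SepA_continuity}. The only change is that the boundary term is multiplied by $\epsilon^{\gamma'}>0$, which preserves monotonicity.
\item The coercivity of $A_u$ in Proposition~\ref{prop:SepACoerformandh} becomes $\int_\Omega m(g(m)-g(0))\dx+\epsilon^{\gamma'}\|h\|^{\gamma'}_{L^{\gamma'}(\Gamma_D)}$. This is still superlinear, since $\epsilon$ is fixed.
\item Theorem~\ref{thm:Existence_of_m_and_h} then gives a unique pair $(m_u,h_u)$. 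The argument of Corollary~\ref{cor:FormulaFor_m_and_h} gives $m_u=g^{-1}(H(x,Du))$ and $h_u=\epsilon^{-\gamma}u_+^{\gamma-1}$, as in Remark~\ref{rmk:ForCor:FormulaFor_m_and_h}.
\item In Lemma~\ref{lem:choiceOfrepForbaru}, choose $\kappa$ with $\int_{\Gamma_D}\epsilon^{-\gamma}(u+\kappa)_+^{\gamma-1}\ds=\int_{\Gamma_N}j\ds$. This is the same lemma applied to the data $\epsilon^{\gamma}j$, which is still nonnegative and not identically zero.
\item The quotient operator $\bar B_\epsilon$ is defined as in \eqref{eq:defOfsepbarB}. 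Its monotonicity, hemicontinuity and coercivity follow verbatim, because the boundary term only involves the nondecreasing map $w\mapsto\epsilon^{-\gamma}w_+^{\gamma-1}$ and integrates to zero against constants.
\item Theorems~\ref{thm:ExistenceOfSolutionsForSepbarB} and~\ref{thm:ExistenceOfSolutionsForSepA} then assemble the solution exactly as before.
\end{enumerate}

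\emph{Option 2: rescaling.} For a cleaner argument I would try the substitution $\tilde u=\epsilon^{-\gamma'}u$ and $\tilde h=h$. With this, the third component becomes $\epsilon^{\gamma'}(-\tilde u+\tilde h^{\gamma'-1})$. However, the HJ component $H(x,Du)$ is not invariant under scaling $u$, so a full rescaling changes $H$ (into $H(x,\epsilon^{\gamma'}p)$). One would then have to re-verify Assumptions~\ref{assumset:S} for the new Hamiltonian. That is possible with $\epsilon$-dependent constants, but it is fiddly. I would therefore present Option 1, remarking that every constant in the $\epsilon=1$ proofs may depend on $\epsilon$ and that nothing else changes.

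\emph{Main obstacle.} I expect the main point needing care to be step iv, the compatibility constant, together with the coercivity of $\bar B_\epsilon$, since the factor $\epsilon^{-\gamma}$ enters the normalization. In the coercivity estimate, the boundary term is still nonnegative after subtracting the constant, because both $h_{u^*}$ and $h_{0^*}$ have the same integral $\int_{\Gamma_N}j\ds$. This is what makes the interior estimate $\tfrac1C\|Du\|^\gamma_{L^\gamma}-C$ decisive. Explicitly, $0^*=\bigl(\epsilon^{\gamma}|\Gamma_D|^{-1}\int_{\Gamma_N}j\ds\bigr)^{\gamma'-1}$, but its value plays no role beyond being a finite constant.
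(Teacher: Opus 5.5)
Your Option~1 is correct and is essentially the paper's own proof. The paper also reruns the $\epsilon=1$ argument of Section~\ref{sec:SepExiste}, noting only that $h_u=\epsilon^{-\gamma}u_+^{\gamma-1}$ and that the compatible representative is fixed by $\int_{\Gamma_D}(u+\kappa)_+^{\gamma-1}\,\ds=\epsilon^{\gamma}\int_{\Gamma_N}j\,\ds$, which is exactly your step iv.
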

\begin{proof}
The proof follows the same lines as that of Theorem~\ref{thm:ExistenceOfSolutionsForSepA}, noting that 
\begin{equation}\label{eq:h_uFormulaForA_ep}
     h_{u} = \frac{u_+^{\gamma-1}}{\epsilon^{\gamma}}.
\end{equation}
Accordingly, the compatible representative is chosen so that \(\int_{\Gamma_D}(u+\kappa)_+^{\gamma-1}\,\ds=\epsilon^\gamma\int_{\Gamma_N}j\,\ds\).
\end{proof}

%****************

\section{Passage to the Limit and Recovery of the MFG System} \label{sec:backToMFG}

In this section, we establish the existence of solutions to MFG~System~\ref{mfg:2} by passing to the limit \( \epsilon \to 0^+ \) in the family of solutions to the penalized variational inequalities associated with \(A_{\epsilon}\). This requires uniform a priori estimates as \( \epsilon \to 0^+ \). In what follows, \( (m_{\epsilon}, u_{\epsilon}, h_{\epsilon}) \) denotes a solution (whose existence is ensured by Theorem~\ref{thm:ExistenceOfSolutionsForSepA_ep}) to the penalized variational inequality
\begin{equation}\label{eq:varIneq_epLast}
	\left\langle A_{\epsilon} \begin{bmatrix} m_{\epsilon} \\ u_{\epsilon}  \\ h_{\epsilon} \end{bmatrix}, \begin{bmatrix} \mu - m_{\epsilon} \\ v- u_{\epsilon} \\ k-h_{\epsilon}\end{bmatrix}  \right \rangle  \geq 0 \qquad \text{for all } (\mu, v, k)  \in \cX^+.
\end{equation}
As in the proof of Corollary~\ref{cor:FormulaFor_m_and_h}, 
testing \eqref{eq:varIneq_epLast} with $(\mu,v,k)=(\mu,u_\epsilon,h_\epsilon)$ and varying $\mu\ge0$, 
and with $(\mu,v,k)=(m_\epsilon,u_\epsilon,k)$ and varying $k\ge0$, yields
\[
m_\epsilon = g^{-1}(H(x,Du_\epsilon)) \quad \text{a.e.\ in }\Omega,
\qquad
h_\epsilon = \epsilon^{-\gamma}(u_\epsilon)_+^{\gamma-1} \quad \text{a.e.\ on }\Gamma_D.
\]

\begin{remark} \label{rmk:PairingAgainstSolutions}
Testing \eqref{eq:varIneq_epLast} with \( ( \mu , v , k ) = (2m_{\epsilon}, 2u_{\epsilon}, 2h_{\epsilon}) \) and with \( ( \mu , v , k ) = (0,0,0 ) \), we obtain
    \[
       \left\langle A_{\epsilon} \begin{bmatrix} m_{\epsilon} \\ u_{\epsilon}  \\ h_{\epsilon} \end{bmatrix}, \begin{bmatrix}  m_{\epsilon} \\ u_{\epsilon} \\ h_{\epsilon}\end{bmatrix}  \right \rangle   = 0.
    \]
\end{remark}

We recall two Poincar\'e-type inequalities with boundary terms. First, recall that \(\mathcal{H}^{d-1}(\Gamma_D)>0\). 
Then for any \(u\in W^{1,\gamma}(\Omega)\), there exists a constant \(C>0\), depending only on \(\Omega\), \(\Gamma_D\), and \(\gamma\), such that
\begin{equation}
\label{poincare}
    \|u\|_{W^{1,\gamma}(\Omega)} \le C \left( \|Du\|_{L^\gamma(\Omega)} + \|u\|_{L^\gamma(\Gamma_D)} \right).
\end{equation}
The second version of the Poincar\'e inequality anchored by the inflow is stated in the following lemma. 

\begin{lemma}\label{lem:poincare_j}
Suppose Assumption~\ref{assume:D.2} holds and that 
\(\mathcal{H}^{d-1}(\Gamma_N)>0\). Let \(J_{\mathrm{in}}:=\int_{\Gamma_N} j\,\ds\).
Then there exists \(C>0\), depending only on \(\Omega\), \(\gamma\), \(J_{\mathrm{in}}\), and \(\|j\|_{L^{\gamma'}(\Gamma_N)}\), such that for every \(u\in W^{1,\gamma}(\Omega)\),
\[
\|u\|_{W^{1,\gamma}(\Omega)}
\le C\Bigl(\|Du\|_{L^\gamma(\Omega)} + \Big|\int_{\Gamma_N} j\,u\ds\Big|\Bigr).
\]
\end{lemma}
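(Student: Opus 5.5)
The plan is to reduce the statement to the standard Poincaré–Wirtinger inequality by splitting off the mean of $u$. Write $u = (u - \avg_\Omega u) + \avg_\Omega u$, with $\avg_\Omega u$ as in \eqref{eq:avg}, and set $w := u - \avg_\Omega u$ and $c := \avg_\Omega u$. By the Poincaré–Wirtinger inequality (already used in the proof of Lemma~\ref{lem:PropertiesOfcQ}), we have $\|w\|_{W^{1,\gamma}(\Omega)} \le C\|Du\|_{L^\gamma(\Omega)}$. Consequently,
\[
\|u\|_{W^{1,\gamma}(\Omega)} \le C\|Du\|_{L^\gamma(\Omega)} + |c|\,|\Omega|^{1/\gamma},
\]
and it only remains to control the constant $|c|$.

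To bound $|c|$, test against $j$. Since $\int_{\Gamma_N} j\,u\ds = c\,J_{\mathrm{in}} + \int_{\Gamma_N} j\,w\ds$, we obtain
\[
|c|\,J_{\mathrm{in}} \le \Big|\int_{\Gamma_N} j\,u\ds\Big| + \|j\|_{L^{\gamma'}(\Gamma_N)}\,\|Tw\|_{L^\gamma(\Gamma_N)}.
\]
The trace theorem gives $\|Tw\|_{L^\gamma(\Gamma)} \le C\|w\|_{W^{1,\gamma}(\Omega)} \le C\|Du\|_{L^\gamma(\Omega)}$. Assumption~\ref{assume:D.2} ensures $J_{\mathrm{in}}>0$, because $j\ge0$ and $j\not\equiv0$ on a set of positive measure. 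Dividing by $J_{\mathrm{in}}$ then yields
\[
|c| \le J_{\mathrm{in}}^{-1}\Big(\Big|\int_{\Gamma_N} j\,u\ds\Big| + C\|j\|_{L^{\gamma'}(\Gamma_N)}\|Du\|_{L^\gamma(\Omega)}\Big).
\]
Substituting this into the first estimate gives the claim. The resulting constant depends only on $\Omega$, $\gamma$, $J_{\mathrm{in}}$ and $\|j\|_{L^{\gamma'}(\Gamma_N)}$, exactly as stated.

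There is no real obstacle here. The only points to check are that $J_{\mathrm{in}}$ is strictly positive and that Hölder's inequality applies, which holds because $j\in L^{\gamma'}(\Gamma_N)$ and the trace of $w$ lies in $L^\gamma(\Gamma_N)$. Connectedness of $\Omega$, assumed in MFG System~\ref{mfg:2}, is what makes the Poincaré–Wirtinger step valid. As an alternative, one could argue by contradiction and compactness (Rellich), but the direct decomposition above is cleaner and gives explicit dependence of the constants.
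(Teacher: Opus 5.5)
Your argument is correct and is essentially the paper's proof. The paper phrases it as a contradiction argument with a normalized sequence $u_n$, but the ingredients are exactly yours: Poincar\'e--Wirtinger applied to $u_n-c_n$ with $c_n$ the mean, the trace theorem, and the bound $|c_n|\,J_{\mathrm{in}}\le |\int_{\Gamma_N} j\,u_n\ds| + \|j\|_{L^{\gamma'}(\Gamma_N)}\|u_n-c_n\|_{L^\gamma(\Gamma_N)}$. No Rellich compactness is used, so your direct version simply unwinds that contradiction and gives an explicit constant.
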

\begin{proof}
By Assumption~\ref{assume:D.2} and \(\mathcal{H}^{d-1}(\Gamma_N)>0\), since \(j\not\equiv 0\) and \(j\ge 0\), we have \(J_{\mathrm{in}}>0\).
To prove the inequality in the lemma, we argue by contradiction. Suppose the claim fails. Then for each \(n\in\mathbb{N}\) there exists \(u_n\in W^{1,\gamma}(\Omega)\) such that
\[
\|u_n\|_{W^{1,\gamma}(\Omega)}=1
\quad\text{and}\quad
\|Du_n\|_{L^\gamma(\Omega)} + \left|\int_{\Gamma_N} j\,u_n\ds\right| \le \frac1n.
\]
Let \(c_n = \frac{1}{|\Omega|}\int_{\Omega} u_n\dx\). By the standard Poincar\'e inequality on \(\Omega\),
\[
\|u_n-c_n\|_{L^\gamma(\Omega)} \le C\,\|Du_n\|_{L^\gamma(\Omega)} \xrightarrow[n\to\infty]{}0,
\]
and, hence, \(u_n-c_n\to 0\) in \(W^{1,\gamma}(\Omega)\) (since also \(\|D(u_n-c_n)\|_{L^\gamma}=\|Du_n\|_{L^\gamma}\to 0\)).
By the trace theorem, it follows that
\[
\|u_n-c_n\|_{L^\gamma(\Gamma_N)} \to 0.
\]
Recalling that \(J>0\), we have
\[
|c_n|\,J_{\mathrm{in}}
= \left|\int_{\Gamma_N} j\,c_n\ds\right|
\le \left|\int_{\Gamma_N} j\,u_n\ds\right|
+ \left|\int_{\Gamma_N} j\,(u_n-c_n)\ds\right|
\le \left|\int_{\Gamma_N} j\,u_n\ds\right| + \|j\|_{L^{\gamma'}(\Gamma_N)}\|u_n-c_n\|_{L^\gamma(\Gamma_N)} \to 0,
\]
so \(c_n\to 0\). Therefore,
\[
\|u_n\|_{W^{1,\gamma}(\Omega)}
\le \|u_n-c_n\|_{W^{1,\gamma}(\Omega)} + \|c_n\|_{W^{1,\gamma}(\Omega)}
= \|u_n-c_n\|_{W^{1,\gamma}(\Omega)} + \|c_n\|_{L^\gamma(\Omega)} \to 0,
\]
which contradicts \(\|u_n\|_{W^{1,\gamma}(\Omega)}=1\). This proves the lemma.
\end{proof}

Our first result is a general estimate that connects the integral information on the Neumann boundary \(\Gamma_N \) to the integral information on the Dirichlet boundary \( \Gamma_D \) and the interior of \( \Omega\).
\begin{lemma} \label{lem:upBoundjuPSIneq1}
	Suppose Assumptions~\ref{assumset:D} and \ref{assumset:S} hold. Let \( \dd >0\) and let \(u \in W^{1,\gamma}(\Omega) \).  Then, there is a constant \( C_{\dd} > 1\), independent of \( u \), such that
    \[
		\int_{\Gamma_N} j  u \ds \leq \dd \, \|Du\|_{L^\gamma(\Omega)}^{\gamma} + \|u_+\|^{\gamma}_{L^\gamma(\Gamma_D)} + C_{\dd}.
    \]
\end{lemma}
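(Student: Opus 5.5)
We need to bound $\int_{\Gamma_N} j u \, ds$ from above by $\delta\|Du\|^\gamma + \|u_+\|^\gamma_{L^\gamma(\Gamma_D)} + C_\delta$. Since $j \ge 0$, we have $\int_{\Gamma_N} ju \le \int_{\Gamma_N} j u_+$, so it suffices to control the trace of $u_+$ on $\Gamma_N$. The function $w := u_+$ lies in $W^{1,\gamma}(\Omega)$ with $|Dw| \le |Du|$ a.e. Applying the Poincaré inequality \eqref{poincare} (anchored on $\Gamma_D$, which has positive measure) to $w$ and then the trace theorem gives
\[
\|u_+\|_{L^\gamma(\Gamma_N)} \le C\|u_+\|_{W^{1,\gamma}(\Omega)} \le C\bigl(\|Du\|_{L^\gamma(\Omega)} + \|u_+\|_{L^\gamma(\Gamma_D)}\bigr).
\]

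By Hölder's inequality,
\[
\int_{\Gamma_N} j u_+ \, ds \le \|j\|_{L^{\gamma'}(\Gamma_N)}\, C\bigl(\|Du\|_{L^\gamma} + \|u_+\|_{L^\gamma(\Gamma_D)}\bigr).
\]
We then apply Young's inequality to each product separately. For the gradient term we use the form $ab \le \delta a^\gamma + C_\delta b^{\gamma'}$, which gives $\delta\|Du\|^\gamma_{L^\gamma} + C_\delta$. For the boundary term we use $ab \le a^\gamma + C b^{\gamma'}$, with coefficient exactly $1$ in front of $\|u_+\|^\gamma_{L^\gamma(\Gamma_D)}$, as the statement requires. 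Since $j$ is fixed data, the resulting constants depend only on $\delta$, $\gamma$, $\Omega$, $\Gamma_D$ and $\|j\|_{L^{\gamma'}(\Gamma_N)}$, and not on $u$. Adding the two estimates gives the claim with $C_\delta > 1$ after enlarging the constant.

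The only step needing care is the justification that $u_+ \in W^{1,\gamma}(\Omega)$ with trace $(Tu)_+$. This is standard: truncation commutes with the trace operator, by density of smooth functions and continuity of $v \mapsto v_+$ on $W^{1,\gamma}$. Once this is in place, the Poincaré inequality \eqref{poincare} applies directly to $u_+$, and the remainder consists only of the two Young splittings described above. The positivity of $j$ from Assumption~\ref{assume:D.2} is what makes it possible to discard the negative part of $u$.
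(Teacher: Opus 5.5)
Your proposal is correct and follows the paper's proof essentially step by step. You use $j\ge 0$ to pass to $u_+$, then H\"older, then the trace theorem combined with the $\Gamma_D$-anchored Poincar\'e inequality \eqref{poincare} applied to $u_+$ (using $\|Du_+\|_{L^\gamma}\le\|Du\|_{L^\gamma}$), and finish with Young's inequality; your justification that the trace of $u_+$ is $(Tu)_+$ makes explicit a point the paper uses without comment.
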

\begin{proof}
Since \(j\ge 0\), we have
\[
\int_{\Gamma_N} j u \ds \le \int_{\Gamma_N} j u_+ \ds
\le \|j\|_{L^{\gamma'}(\Gamma_N)} \,\|u_+\|_{L^\gamma(\Gamma_N)}
\]
by H\"older's inequality.
By the Trace Theorem and the Poincar\'e inequality \eqref{poincare} applied to \(u_+\in W^{1,\gamma}(\Omega)\),
\begin{equation}\label{eq:lem:upBoundjuPSIneq1}
\|u_+\|_{L^\gamma(\Gamma_N)} \le C\|u_+\|_{W^{1,\gamma}(\Omega)}
\le C\Bigl(\|Du_+\|_{L^\gamma(\Omega)} + \|u_+\|_{L^\gamma(\Gamma_D)}\Bigr).
\end{equation}
Using \(\|Du_+\|_{L^\gamma(\Omega)} \le \|Du\|_{L^\gamma(\Omega)}\) and Young's inequality, for any \(\dd>0\) we obtain
\[
\int_{\Gamma_N} j u \ds
\le \dd\,\|Du\|_{L^\gamma(\Omega)}^\gamma
+ \|u_+\|_{L^\gamma(\Gamma_D)}^\gamma
+ C_\dd,
\]
for a constant \(C_\dd>1\) independent of \(u\).
\end{proof}

Next, we establish the following estimate, which gives a bound on the growth of the solution as \( \epsilon \to 0^+\).

\begin{proposition} \label{pro:upBoundjuPSIneq2}
	Under Assumptions~\ref{assumset:D} and \ref{assumset:S}, let \( 0 < \epsilon < \frac{1}{2}\). Let \( (m_{\epsilon}, u_{\epsilon}, h_{\epsilon}) \in \cX^+ \) be a solution to \eqref{eq:varIneq_epLast}.
    Then, there is a constant \(C >0\), independent of \( \epsilon \), such that
    \[
       \epsilon^{-\gamma} \int_{\Gamma_D} (u_\epsilon)_+^{\gamma}\ds + \|Du_\epsilon\|_{L^\gamma(\Omega)}^{\gamma} \leq C.
    \]
\end{proposition}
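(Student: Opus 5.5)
We will use the energy identity of Remark~\ref{rmk:PairingAgainstSolutions}, namely $\langle A_\epsilon(m_\epsilon,u_\epsilon,h_\epsilon),(m_\epsilon,u_\epsilon,h_\epsilon)\rangle=0$, and insert the explicit formulas $m_\epsilon=g^{-1}(H(x,Du_\epsilon))$ and $h_\epsilon=\epsilon^{-\gamma}(u_\epsilon)_+^{\gamma-1}$. Expanding the pairing \eqref{eq:A_rigorous_pairing} with test triplet equal to the solution itself gives
\[
\int_\Omega m_\epsilon\bigl(g(m_\epsilon)-H(x,Du_\epsilon)+D_pH(x,Du_\epsilon)\cdot Du_\epsilon\bigr)\dx-\int_{\Gamma_N} ju_\epsilon\ds+\int_{\Gamma_D}\bigl(h_\epsilon u_\epsilon-u_\epsilon h_\epsilon+\epsilon^{\gamma'}h_\epsilon^{\gamma'}\bigr)\ds=0.
\]
The complementarity $m_\epsilon(g(m_\epsilon)-H(x,Du_\epsilon))=0$ removes the first difference, and $\epsilon^{\gamma'}h_\epsilon^{\gamma'}=\epsilon^{\gamma'-\gamma\gamma'}(u_\epsilon)_+^{\gamma}=\epsilon^{-\gamma}(u_\epsilon)_+^\gamma$ since $\gamma'(\gamma-1)=\gamma$. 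This yields the key identity
\[
\int_\Omega m_\epsilon D_pH(x,Du_\epsilon)\cdot Du_\epsilon\dx+\epsilon^{-\gamma}\int_{\Gamma_D}(u_\epsilon)_+^\gamma\ds=\int_{\Gamma_N} ju_\epsilon\ds.
\]

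Next, we bound the left-hand side from below, exactly as in the coercivity proof of Proposition~\ref{prop:coercivityofSepbarB}. Combining Corollary~\ref{cor:boundOng^-1(H)1} with Assumption~\ref{assume:S.5}(b) gives $m_\epsilon D_pH\cdot Du_\epsilon\ge C^{-1}|Du_\epsilon|^\gamma-C$. Some care is needed where $D_pH\cdot p$ is negative (small $|p|$); there $m_\epsilon$ is bounded, so the product is bounded below by a constant. Hence the left-hand side is at least $C^{-1}\|Du_\epsilon\|_{L^\gamma}^\gamma-C+\epsilon^{-\gamma}\|(u_\epsilon)_+\|^\gamma_{L^\gamma(\Gamma_D)}$.

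For the right-hand side we apply Lemma~\ref{lem:upBoundjuPSIneq1} with $\dd=(2C)^{-1}$:
\[
\int_{\Gamma_N} ju_\epsilon\ds\le \tfrac{1}{2C}\|Du_\epsilon\|_{L^\gamma}^\gamma+\|(u_\epsilon)_+\|^\gamma_{L^\gamma(\Gamma_D)}+C_\dd.
\]
Since $\epsilon<1/2$, we have $\epsilon^{-\gamma}\ge 2^\gamma\ge 2$. We therefore absorb the term $\|(u_\epsilon)_+\|^\gamma_{L^\gamma(\Gamma_D)}$ into half of $\epsilon^{-\gamma}\|(u_\epsilon)_+\|^\gamma$, and the gradient term into the left side. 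The result is $\tfrac{1}{2C}\|Du_\epsilon\|^\gamma+\tfrac12\epsilon^{-\gamma}\int_{\Gamma_D}(u_\epsilon)_+^\gamma\le C$, with a constant independent of $\epsilon$.

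The main obstacle is justifying Remark~\ref{rmk:PairingAgainstSolutions} and the identity rigorously. Testing with $2(m,u,h)$ and $0$ is legitimate because $\cX^+$ is a cone, so that part is fine. The remaining check is the pointwise lower bound on $m D_pH\cdot p$ uniformly in $x$, which follows from the stated assumptions.
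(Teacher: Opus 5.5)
Your proposal is correct and follows the paper's proof step for step. It starts from the energy identity of Remark~\ref{rmk:PairingAgainstSolutions}, reduces via the complementarity for $m_\epsilon$ and $h_\epsilon=\epsilon^{-\gamma}(u_\epsilon)_+^{\gamma-1}$ to $\int_\Omega m_\epsilon D_pH(x,Du_\epsilon)\cdot Du_\epsilon\dx+\epsilon^{-\gamma}\int_{\Gamma_D}(u_\epsilon)_+^\gamma\ds=\int_{\Gamma_N}ju_\epsilon\ds$, uses the lower bound $m_\epsilon D_pH\cdot Du_\epsilon\ge C^{-1}|Du_\epsilon|^\gamma-C$, and absorbs via Lemma~\ref{lem:upBoundjuPSIneq1} using $\epsilon<\tfrac12$; your remark on the bounded-$|p|$ region where $D_pH\cdot p<0$ spells out a detail of that pointwise lower bound which the paper leaves implicit.
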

\begin{proof}
To simplify the notation, \( (m,u,h) = (m_{\epsilon}, u_{\epsilon}, h_{\epsilon}) \). By Remark~\ref{rmk:PairingAgainstSolutions} and the definition of the pairing \eqref{eq:A_rigorous_pairing}, we have
\begin{align}
\notag
0
&= \left\langle A_{\epsilon} \begin{bmatrix} m \\ u \\ h \end{bmatrix}, \begin{bmatrix} m \\ u \\ h \end{bmatrix} \right\rangle\\\label{eq:energy_identity_eps}
&= \int_\Omega m\bigl(g(m)-H(x,Du)\bigr)\dx + \int_\Omega m D_pH(x,Du)\cdot Du\dx - \int_{\Gamma_N} j u\ds + \epsilon^{\gamma'}\int_{\Gamma_D} h^{\gamma'}\ds.
\end{align}
Note that the boundary terms $\int_{\Gamma_D}hu\,\ds$ and $\int_{\Gamma_D}(-u)h\,\ds$ cancel in the pairing,
leaving only $\epsilon^{\gamma'}\int_{\Gamma_D}h^{\gamma'}\,\ds$.
By Corollary~\ref{cor:FormulaFor_m_and_h} (and Remark~\ref{rmk:ForCor:FormulaFor_m_and_h} for the \(\epsilon\)-dependence), we have \(m(g(m)-H(x,Du))=0\) a.e. in \(\Omega\) and \(\epsilon^{\gamma'} h^{\gamma'} = \epsilon^{-\gamma}u_+^\gamma\) a.e. on \(\Gamma_D\). Hence \eqref{eq:energy_identity_eps} reduces to
\begin{equation}\label{eq:energy_identity_eps2}
\int_\Omega m D_pH(x,Du)\cdot Du\dx + \epsilon^{-\gamma}\int_{\Gamma_D} u_+^{\gamma}\ds
= \int_{\Gamma_N} j u\ds.
\end{equation}
By Assumption~\ref{assume:S.5} (in particular \ref{assume:S.5.b}), Corollary~\ref{cor:boundOng^-1(H)1}, and Young's inequality, we obtain a lower bound of the form
\[
m D_pH(x,Du)\cdot Du \ge C^{-1}|Du|^{\gamma} - C.
\]
Integrating and inserting into \eqref{eq:energy_identity_eps2} gives
\[
\epsilon^{-\gamma}\int_{\Gamma_D} u_+^{\gamma}\ds + C^{-1}\|Du\|_{L^\gamma(\Omega)}^{\gamma} - C
\le \int_{\Gamma_N} j u\ds.
\]
Finally, apply Lemma~\ref{lem:upBoundjuPSIneq1} to the right-hand side of \eqref{eq:energy_identity_eps2} and choose \(\dd>0\) small enough to absorb the \(\dd\|Du\|_{L^\gamma(\Omega)}^\gamma\) term. Rearranging then gives
\[
(\epsilon^{-\gamma}-1)\int_{\Gamma_D} u_+^{\gamma}\ds + \|Du\|_{L^\gamma(\Omega)}^{\gamma} \leq C,
\]
for a constant \(C>0\) independent of \(\epsilon\). Since \(0<\epsilon<\tfrac12\),
the stated estimate follows (with a possibly larger constant \(C\)).
\end{proof}

\begin{lemma}\label{lem:hAnotherForm}
    Suppose Assumptions~\ref{assumset:S} and \ref{assumset:D} hold. Let \( (m_{\epsilon}, u_{\epsilon}, h_{\epsilon}) \in \cX^+ \) be a solution to \eqref{eq:varIneq_epLast}.
Then
\begin{equation}
\label{div0}
\Div\bigl(m_{\epsilon} D_pH(x,Du_{\epsilon})\bigr)=0
\end{equation}
in $\mathcal{D}'(\Omega)$, 
\(
h_{\epsilon} = - m_{\epsilon} D_pH(x,Du_{\epsilon}) \cdot \nu \quad \text{on } \Gamma_D,
\)
and $m_\epsilon D_pH(x,Du_\epsilon)\cdot\nu=j$ on $\Gamma_N$,
in the sense of $W^{-1+\frac1\gamma,\gamma'}(\Gamma)$.
\end{lemma}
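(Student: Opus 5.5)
The plan is to extract the transport equation and its boundary conditions from the $v$-component of the variational inequality \eqref{eq:varIneq_epLast}, and then read off the normal traces through the generalized Green formula. The first step is to test \eqref{eq:varIneq_epLast} with $(\mu,v,k)=(m_\epsilon,u_\epsilon\pm w,h_\epsilon)$ for arbitrary $w\in W^{1,\gamma}(\Omega)$. This is admissible in $\cX^+$ because the $u$-slot carries no sign constraint. The resulting pair of opposite inequalities gives the identity
\begin{equation*}
\int_\Omega m_\epsilon D_pH(x,Du_\epsilon)\cdot Dw\dx-\int_{\Gamma_N} jw\ds+\int_{\Gamma_D} h_\epsilon w\ds=0\qquad\forall w\in W^{1,\gamma}(\Omega).
\end{equation*}
Set $F:=m_\epsilon D_pH(x,Du_\epsilon)$. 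By Proposition~\ref{prop:WellDefinedness}, $F\in L^{\gamma'}(\Omega;\R^d)$.

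Next, restrict the identity to $w\in C_c^\infty(\Omega)$. The boundary terms then vanish, and we obtain $\int_\Omega F\cdot Dw\dx=0$, which is exactly \eqref{div0} in $\mathcal D'(\Omega)$. In particular $\Div F=0\in L^{\gamma'}(\Omega)$. By the convention in the Notation section, $F$ therefore has a well-defined normal trace $F\cdot\nu\in W^{-1+\frac1\gamma,\gamma'}(\Gamma)$, characterized by the Green formula
\begin{equation*}
\langle F\cdot\nu,Tw\rangle_\Gamma=\int_\Omega F\cdot Dw\dx+\int_\Omega w\,\Div F\dx=\int_\Omega F\cdot Dw\dx\qquad\forall w\in W^{1,\gamma}(\Omega).
\end{equation*}
This uses the surjectivity of $T$ onto $W^{1-\frac1\gamma,\gamma}(\Gamma)$, which is what makes the definition independent of the extension $w$.

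Comparing the Green formula with the identity from the first step gives
\begin{equation*}
\langle F\cdot\nu,Tw\rangle_\Gamma=\int_{\Gamma_N} jTw\ds-\int_{\Gamma_D}h_\epsilon Tw\ds
\end{equation*}
for every $w$, and hence for every element of the trace space. So, as elements of $W^{-1+\frac1\gamma,\gamma'}(\Gamma)$,
\begin{equation*}
F\cdot\nu=j\Chi_{\Gamma_N}-h_\epsilon\Chi_{\Gamma_D}.
\end{equation*}
The right-hand side lies in $L^{\gamma'}(\Gamma)$, and by the hypothesis $\cH^{d-1}(\partial\Gamma_D\cap\partial\Gamma_N)=0$ the two pieces are determined a.e. Restricting to trace functions supported in $\Gamma_N$ or in $\Gamma_D$ yields $F\cdot\nu=j$ on $\Gamma_N$ and $-F\cdot\nu=h_\epsilon$ on $\Gamma_D$, in the stated distributional sense.

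The only delicate point is this final localization: one must make precise what ``on $\Gamma_D$'' means for an element of a dual trace space. I would interpret it as equality of the restrictions, that is, of the functionals tested against traces supported in $\overline{\Gamma_D}$ (respectively $\overline{\Gamma_N}$). Since $F\cdot\nu$ turns out to be an $L^{\gamma'}$ function on $\Gamma$, this localization is unambiguous, so no real obstacle arises.
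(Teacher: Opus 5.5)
Your proposal is correct and follows the paper's proof essentially step for step. You extract the weak flux identity from the $v$-slot, testing with $(m_\epsilon,u_\epsilon\pm w,h_\epsilon)$ directly, which is slightly cleaner than the paper's choice $(\mu,k)=(0,0)$ combined with Remark~\ref{rmk:PairingAgainstSolutions}. You then restrict to $C_c^\infty(\Omega)$ to get \eqref{div0}, and identify $F\cdot\nu=j\Chi_{\Gamma_N}-h_\epsilon\Chi_{\Gamma_D}$ through the generalized Green formula; the paper instead obtains that formula by citing Theorem~2.8 of \cite{Alharbi2026MFG}.

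One small correction to your justification of the Green formula. The definition is independent of the extension $w$ because $\ker T=W^{1,\gamma}_0(\Omega)$ and $\Div F=0$ in $\mathcal{D}'(\Omega)$, not because $T$ is surjective. Surjectivity of $T$, with a bounded right inverse, is what makes $F\cdot\nu$ a bounded functional on the whole trace space.
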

\begin{proof}
Setting \((\mu,k)=(0,0)\) in \eqref{eq:varIneq_epLast} and using Remark~\ref{rmk:PairingAgainstSolutions}, we obtain for every \(v\in W^{1,\gamma}(\Omega)\),
\[
0 \le \int_\Omega m_{\epsilon} D_pH(x,Du_{\epsilon})\cdot Dv \dx - \int_{\Gamma_N} j v\ds + \int_{\Gamma_D} h_{\epsilon} v\ds.
\]
Replacing \(v\) by \(-v\) yields the reverse inequality, hence
\begin{equation}\label{eq:weakFluxIdentity_eps}
\int_\Omega m_{\epsilon} D_pH(x,Du_{\epsilon})\cdot Dv \dx
- \int_{\Gamma_N} j\, v\ds + \int_{\Gamma_D} h_{\epsilon}\, v\ds = 0
\qquad \forall v\in W^{1,\gamma}(\Omega),
\end{equation}
where boundary terms are understood via the trace of \(v\).

In particular, taking \(v\in C_c^\infty(\Omega)\) (or \(v\in W^{1,\gamma}_0(\Omega)\)) in \eqref{eq:weakFluxIdentity_eps} eliminates the boundary terms and yields
\[
\int_\Omega m_{\epsilon} D_pH(x,Du_{\epsilon})\cdot Dv \dx = 0
\qquad \forall v\in C_c^\infty(\Omega),
\]
so \eqref{div0} holds. 
Set \(F_{\epsilon}:=m_{\epsilon} D_pH(x,Du_{\epsilon})\).
By Proposition~\ref{prop:WellDefinedness}, \(F_{\epsilon}\in L^{\gamma'}(\Omega;\R^d)\), and the previous step gives \(\Div F_{\epsilon}=0\) in \(\mathcal{D}'(\Omega)\) (hence, \(\Div F_{\epsilon}\) is the zero element of \(L^{\gamma'}(\Omega)\)).
Therefore, Theorem~2.8 in \cite{Alharbi2026MFG} applies and yields a normal trace \(F_{\epsilon}\cdot\nu \in W^{-1+\frac1\gamma,\gamma'}(\Gamma)\) such that
\[
\int_\Omega F_{\epsilon}\cdot Dv \dx
= \big\langle F_{\epsilon}\cdot \nu,\; v|_{\Gamma}\big\rangle_{\Gamma}
\qquad \forall v\in W^{1,\gamma}(\Omega).
\]
Combining this identity with \eqref{eq:weakFluxIdentity_eps}, we obtain
\[
\big\langle F_{\epsilon}\cdot\nu + \Chi_{\Gamma_D}h_{\epsilon} - \Chi_{\Gamma_N}j,\; v|_{\Gamma}\big\rangle_{\Gamma}=0
\qquad \forall v\in W^{1,\gamma}(\Omega).
\]
Hence,
\[
F_{\epsilon}\cdot\nu + \Chi_{\Gamma_D}h_{\epsilon} - \Chi_{\Gamma_N}j = 0
\quad \text{in } W^{-1+\frac1\gamma,\gamma'}(\Gamma),
\]
and, in particular, \(F_{\epsilon}\cdot\nu = -h_{\epsilon}\) on \(\Gamma_D\), i.e.,
\(h_{\epsilon} = -m_{\epsilon} D_pH(x,Du_{\epsilon})\cdot\nu\) on \(\Gamma_D\).
\end{proof}

Finally, we combine the preceding results to establish the existence theorem for MFG System~\ref{mfg:2} from the introduction.
\begin{proof}[\textbf{Proof of Theorem~\ref{thm:ExistInSepMFG}}]
Let \(0<\epsilon<\frac12\), and let \((m_{\epsilon},u_{\epsilon},h_{\epsilon})\in\cX^+\) be a solution of \eqref{eq:varIneq_epLast}.

\paragraph{Step 1: Uniform bounds and weak compactness.}
From Proposition~\ref{pro:upBoundjuPSIneq2}, we have
\begin{equation}\label{eq:pfOf_ExistInSepMFG1}
\|Du_{\epsilon}\|_{L^\gamma(\Omega)}^{\gamma} + \epsilon^{-\gamma} \int_{\Gamma_D} (u_\epsilon)_+^{\gamma}\ds \leq C.
\end{equation}
In particular, \(\|Du_{\epsilon}\|_{L^\gamma(\Omega)} \le C\).
Moreover, \(\|(u_\epsilon)_+\|_{L^\gamma(\Gamma_D)}^\gamma \le C\epsilon^\gamma \to 0\).

Using Corollary~\ref{cor:boundOng^-1(H)1} and Corollary~\ref{cor:FormulaFor_m_and_h}, we have \(m_{\epsilon}=g^{-1}(H(x,Du_{\epsilon}))\) and therefore
\[
\|m_{\epsilon}\|_{L^{\beta+1}(\Omega)} \le C.
\]
Moreover, by Assumption~\ref{assume:S.4} and the same Young-inequality argument used in Proposition~\ref{prop:WellDefinedness} (transport-flux estimate), we have
\begin{equation}\label{eq:uniform_flux_bound}
\|m_{\epsilon}D_pH(x,Du_{\epsilon})\|_{L^{\gamma'}(\Omega;\R^d)}^{\gamma'}
\le C\Bigl(\|m_{\epsilon}\|_{L^{\beta+1}(\Omega)}^{\beta+1}+\|Du_{\epsilon}\|_{L^\gamma(\Omega)}^{\gamma}+1\Bigr).
\end{equation}
In particular, combining \eqref{eq:pfOf_ExistInSepMFG1} with \(\|m_\epsilon\|_{L^{\beta+1}(\Omega)}\le C\), we obtain
\[
\|m_{\epsilon}D_pH(x,Du_{\epsilon})\|_{L^{\gamma'}(\Omega;\R^d)} \le C.
\]
By Lemma~\ref{lem:hAnotherForm} and the normal-trace estimate (see \cite{Alharbi2026MFG}, Theorem~2.8),
\[
\|h_{\epsilon}\|_{W^{-1+\frac1\gamma,\gamma'}(\Gamma_D)}
\le C \|m_{\epsilon}D_pH(x,Du_{\epsilon})\|_{L^{\gamma'}(\Omega;\R^d)} \le C.
\]
Finally, by Lemma~\ref{lem:poincare_j} we have
\[
\|u_{\epsilon}\|_{W^{1,\gamma}(\Omega)}
\le C\Bigl(\|Du_{\epsilon}\|_{L^\gamma(\Omega)} + \Big|\int_{\Gamma_N} j\,u_{\epsilon}\ds\Big|\Bigr).
\]
Using \eqref{eq:energy_identity_eps2}, H\"older's inequality, and \eqref{eq:uniform_flux_bound}, we obtain
\begin{align*}
\left|\int_{\Gamma_N} j\,u_{\epsilon}\ds\right|
&= \left| \int_\Omega m_{\epsilon}D_pH(x,Du_{\epsilon})\cdot Du_{\epsilon}\dx
      + \epsilon^{-\gamma}\int_{\Gamma_D} (u_\epsilon)_+^\gamma\ds \right|\\
&\le \|m_{\epsilon}D_pH(x,Du_{\epsilon})\|_{L^{\gamma'}(\Omega;\R^d)}\|Du_{\epsilon}\|_{L^\gamma(\Omega)} + C
\le C.
\end{align*}
Therefore,
\[
\|u_{\epsilon}\|_{W^{1,\gamma}(\Omega)} \le C.
\]

Weak compactness in $W^{1,\gamma}(\Omega)$ implies that there exists $u\in W^{1,\gamma}(\Omega)$ and a subsequence such that 
\(u_\epsilon\rightharpoonup u\) in \(W^{1,\gamma}(\Omega)\).
By continuity of the trace operator, $u_\epsilon|_{\Gamma}\rightharpoonup u|_{\Gamma}$ in $W^{1-\frac1\gamma,\gamma}(\Gamma)$ (and hence in $L^\gamma(\Gamma)$), in particular on $\Gamma_N$ and $\Gamma_D$.
Since the functional \(w\mapsto \int_{\Gamma_D}(w_+)^\gamma\,\ds\) is convex and weakly lower semicontinuous on \(L^\gamma(\Gamma_D)\),
\[
\int_{\Gamma_D}(u_+)^\gamma\,\ds
\le \liminf_{\epsilon\to0^+}\int_{\Gamma_D}((u_\epsilon)_+)^\gamma\,\ds=0.
\]
Hence, \(u\le 0\) a.e.\ on \(\Gamma_D\).

To summarize, the preceding bounds imply there exists a subsequence (not relabeled) and \((m,u,h)\) such that
\[
m_{\epsilon}\rightharpoonup m \ \text{ in }L^{\beta+1}(\Omega),\qquad
u_{\epsilon}\rightharpoonup u \ \text{ in }W^{1,\gamma}(\Omega),\qquad
h_{\epsilon}\rightharpoonup h \ \text{ in }W^{-1+\frac1\gamma,\gamma'}(\Gamma_D).
\]
Since $m_\epsilon\ge0$ and $m_\epsilon\rightharpoonup m$ in $L^{\beta+1}(\Omega)$, we have $m\ge0$ a.e.\ in $\Omega$.
Similarly, 
since $h_\epsilon\ge0$ and $h_\epsilon\rightharpoonup h$ in $W^{-1+\frac1\gamma,\gamma'}(\Gamma_D)$,
we have $\langle h,\phi\rangle\ge0$ for all $\phi\ge0$ by weak convergence. Thus, \(h\ge 0\) in the sense of distributions  on 
\(\Gamma_D\).

\paragraph{Step 2: Passage to the limit (Minty formulation).}
Fix \((\mu,v,k)\in\cX^+\). By monotonicity of \(A_\epsilon\),
\[
\left\langle 
A_{\epsilon}\begin{bmatrix} \mu \\ v \\ k \end{bmatrix}
- A_{\epsilon}\begin{bmatrix} m_{\epsilon} \\ u_{\epsilon} \\ h_{\epsilon} \end{bmatrix},
\begin{bmatrix} \mu - m_{\epsilon} \\ v-u_{\epsilon} \\ k-h_{\epsilon}\end{bmatrix}\right\rangle
\ge 0.
\]
Combining this with \eqref{eq:varIneq_epLast} yields
\[
 \left\langle A_{\epsilon}\begin{bmatrix} \mu \\ v \\ k \end{bmatrix},
\begin{bmatrix} \mu - m_{\epsilon} \\ v-u_{\epsilon} \\ k-h_{\epsilon}\end{bmatrix}\right\rangle\geq 0.
\]
Expanding the pairing, 
letting \(\epsilon\to0^+\), and using weak convergence, 
we obtain
    \[
       \left\langle A_{0} \begin{bmatrix} \mu  \\ v   \\ k \end{bmatrix}, \begin{bmatrix} \mu - m   \\ v- u   \\ k-h  \end{bmatrix}  \right \rangle + \lim_{\epsilon \to 0^+} \epsilon^{\gamma'} \int_{\Gamma_D} k^{\gamma'-1} (k-h_{\epsilon})\ds\geq 0.
    \]
Here boundary terms involving $h_\epsilon$ are interpreted via the duality pairing 
$W^{-1+\frac1\gamma,\gamma'}(\Gamma_D)\times W^{1-\frac1\gamma,\gamma}(\Gamma_D)$,
and we use $h_\epsilon\rightharpoonup h$ in $W^{-1+\frac1\gamma,\gamma'}(\Gamma_D)$.    
The last term
vanishes in the limit. Indeed, recalling that \(h_\epsilon = \epsilon^{-\gamma}(u_\epsilon)_+^{\gamma-1}\), we estimate the \(L^{\gamma'}\) norm of \(h_\epsilon\):
\[
\|h_\epsilon\|_{L^{\gamma'}(\Gamma_D)}^{\gamma'} = \int_{\Gamma_D} \epsilon^{-\gamma\gamma'} (u_\epsilon)_+^{(\gamma-1)\gamma'} \ds = \epsilon^{-\gamma\gamma'} \int_{\Gamma_D} (u_\epsilon)_+^{\gamma} \ds,
\]
where we used that \((\gamma-1)\gamma' = \gamma\). Using the uniform bound from Proposition~\ref{pro:upBoundjuPSIneq2}, we have \(\int_{\Gamma_D} (u_\epsilon)_+^{\gamma} \ds \leq C \epsilon^\gamma\). Thus,
\[
\|h_\epsilon\|_{L^{\gamma'}(\Gamma_D)} \leq \left( C \epsilon^{-\gamma\gamma' + \gamma} \right)^{1/\gamma'} = C^{1/\gamma'} \epsilon^{-\gamma + \gamma/\gamma'} = C' \epsilon^{-1}.
\]
We can now bound the penalty term using H\"older's inequality. Since \(k \in L^{\gamma'}(\Gamma_D)\), we have \(k^{\gamma'-1} \in L^\gamma(\Gamma_D)\), and
\[
\left| \epsilon^{\gamma'} \int_{\Gamma_D} k^{\gamma'-1} h_\epsilon \ds \right| \leq \epsilon^{\gamma'} \|k^{\gamma'-1}\|_{L^\gamma(\Gamma_D)} \|h_\epsilon\|_{L^{\gamma'}(\Gamma_D)} \leq C'' \epsilon^{\gamma'} \epsilon^{-1} = C'' \epsilon^{\gamma'-1}.
\]
Since \(\gamma > 1\) implies \(\gamma' > 1\), the exponent \(\gamma'-1\) is positive, and the term converges to \(0\) as \(\epsilon \to 0^+\). Since \(\epsilon^{\gamma'} \int_{\Gamma_D} k^{\gamma'} \ds\) clearly vanishes, the entire limit term is zero.

Hence,
\[
\left\langle A_{0}\begin{bmatrix} \mu \\ v \\ k \end{bmatrix},
\begin{bmatrix} \mu - m \\ v-u \\ k-h\end{bmatrix}\right\rangle\geq 0
\qquad \forall (\mu,v,k)\in\cX^+,
\]
where we identify \(k\in L^{\gamma'}(\Gamma_D)\) with an element of \(W^{-1+\frac1\gamma,\gamma'}(\Gamma_D)\) via the natural embedding, so that \(k-h\in W^{-1+\frac1\gamma,\gamma'}(\Gamma_D)\) is well-defined and paired against traces in \(W^{1-\frac1\gamma,\gamma}(\Gamma_D)\).

\paragraph{Step 3: Conclusion.}
Since $A_0$ is monotone and hemicontinuous (by the same argument as for $A_\epsilon$) on the convex cone of admissible triples (allowing $h\in W^{-1+\frac1\gamma,\gamma'}(\Gamma_D)$), 
the Minty characterization of variational inequality solutions implies that the limit triplet $(m,u,h)$ satisfies
\[
\langle A_0(m,u,h), (\mu-m, v-u, k-h)\rangle\ge0\qquad\forall(\mu,v,k)\in\cX^+,
\]
which is exactly the statement of Theorem~\ref{thm:ExistInSepMFG}.
\end{proof}

\section{Conclusion} \label{sec:Conclusion}

We proved the existence of weak solutions for separable first-order MFGs with mixed boundary conditions via a monotone operator formulation.
To this end, we introduced an auxiliary boundary variable $h$ to encode the exit flux, enabling the formulation of the MFG system as a variational inequality on a convex domain. Then, we identified the failure of coercivity in the $u$-variable and resolved it through a quotient-space formulation that eliminates the constant-shift degeneracy. Finally, we established the existence of weak solutions by applying the Browder--Minty theorem to a family of penalized operators and passing to the limit.

The separable structure $\mathcal{H}(x,p,m) = H(x,p) - g(m)$ was essential to our approach: the strict monotonicity of $g$ allowed us to define the density explicitly as $m = g^{-1}(H(x,Du))$, which in turn enabled the reduction to the quotient-space operator $\bar{B}$.

For non-separable Hamiltonians $\mathcal{H}(x,p,m)$, one may still define the density implicitly via the relation $\mathcal{H}(x,Du,m) = 0$, yielding $m = m(x,Du)$ under appropriate monotonicity conditions on $\mathcal{H}$ with respect to $m$. However, this implicit formulation requires a substantially different structure for the assumptions---in particular, joint conditions on the growth and monotonicity of $\mathcal{H}$ in both $p$ and $m$---and leads to different analytical challenges in establishing the requisite bounds. These are addressed in a forthcoming companion paper.

\bibliographystyle{abbrv}
\bibliography{mfgv8_nn.bib}

\end{document}